\theoremstyle{plain}
\newtheorem{thm}{Theorem}[section]
\newtheorem{lem}[thm]{Lemma}
\newtheorem{prop}[thm]{Proposition}
\newtheorem{cor}[thm]{Corollary}
\newcommand{\att}[2]{\raisebox{-.5\height}{ \includegraphics[scale = #2]{diagrams/#1.pdf}}}
\theoremstyle{definition}
\newtheorem{defn}[thm]{Definition}
\newtheorem{ex}[thm]{Example}
\newtheorem{rmk}[thm]{Remark}
\newtheorem{conj}[thm]{Conjecture}
\newcommand{\Hom}{\operatorname{Hom}}
\newcommand{\End}{\operatorname{End}}
\newtheoremstyle{named}{}{}{\itshape}{}{\bfseries}{.}{.5em}{#1 \thmnote{(#3) }}
\theoremstyle{named}
\title{Classification of 1-super-transitive quantum subgroups in type A}
\author{Cain Edie-Michell and Jacques Katumba}
\date{}
\begin{document}

\begin{abstract}
    We define a notion of super-transitivity for \`etale algebra objects $A \in \mathcal{C}(\mathfrak{sl}_N, k)$. This definition is a direct analogue of the notion of super-transitivity for subfactors, and measures at what depth the first ``new stuff'' appears in the category of $A$-modules internal to $\mathcal{C}(\mathfrak{sl}_N, k)$. Our main theorem gives a classification of all 1-super-transitive \`etale algebra objects in $\mathcal{C}(\mathfrak{sl}_N, k)$ running over all $N,k \in \mathbb{N}$. Our classification captures all known infinite families of non-pointed \`etale algebras in $\mathcal{C}(\mathfrak{sl}_N, k)$, and includes all but 16 of the known non-pointed \`etale algebra objects in these categories. These remaining 16 known examples have super-transitivities between 2 and 4.
\end{abstract}
\maketitle
\section{Introduction}

An important class of tensor categories are the categories of integrable level-$k$ representations of the affine Lie algebras $\mathfrak{g}$ where $\mathfrak{g}$ is a simple Lie algebra, and $k$ is a positive integer level \cite{SisiBakBak}. These categories are typically denoted $\mathcal{C}(\mathfrak{g},k)$. There has been a recent revival in the study of \`etale algebra objects in these categories. This revival is mainly due to work of Gannon \cite{LevelTerry} and Schopieray \cite{LevelAndy}. In particular recent work of Gannon has shown that for a fixed simple Lie algebra $\mathfrak{g}$, there exists an effective bound on the level $k$ where the category $\mathcal{C}(\mathfrak{g}, k)$ can have non-pointed (i.e. exceptional) \`etale algebra objects. This has allowed Gannon to give complete classification results for \`etale algebra objects in the categories $\mathcal{C}(\mathfrak{g}, k)$ for all $\mathfrak{g}$ of rank strictly less than 7. This fully resolves the classification problem for all exceptional simple Lie algebras (aside from the Lie algebras of type $E_7$ and $E_8$, which will fall to additional computational power using Gannon's techniques).

Further progress on the classification of \`etale algebras for the classical Lie algebras will have to improve on Gannons results by obtaining restrictions independent of the rank of the Lie algebra. This paper is an initial attempt at the study of \`etale algebras in a rank agnostic fashion. We restrict our attention to type $A$ for this work, though we expect our techniques will extend to give results in types $B$, $C$, and $D$. Towards this goal we make the following definition of \textit{super-transitivity} for \`etale algebras in $\mathcal{C}(\mathfrak{sl}_N, k)$. This is a direct analogue of the notion of super-transitivity for subfactors (see \cite[Definition 4.1.1]{sup} and \cite[Definition 3.2.1]{sup2}), which itself is a generalisation of the notion of transitivity for group actions.

\begin{defn} 
Let $A\in \mathcal{C}(\mathfrak{sl}_N, k)$ be an \`etale algebra object. We say $A$ is $n$-super-transitive if 
\[   \dim\operatorname{End}_{ \mathcal{C}(\mathfrak{sl}_N, k)}(  \ydiagram{1}^{\otimes n}) =   \dim\operatorname{End}_{ \mathcal{C}(\mathfrak{sl}_N, k)_A}(  \mathcal{F}_A(\ydiagram{1}^{\otimes n}))  \]
and 
\[   \dim\operatorname{End}_{ \mathcal{C}(\mathfrak{sl}_N, k)}(  \ydiagram{1}^{\otimes n+1}) <   \dim\operatorname{End}_{ \mathcal{C}(\mathfrak{sl}_N, k)_A}(  \mathcal{F}_A(\ydiagram{1}^{\otimes n+1})).  \]
Equivalently, $A$ is $n$-super-transitive if \[\dim\Hom_{\mathcal{C}(\mathfrak{sl}_N,k)} (A \to \ydiagram{1}^{\otimes n} \otimes \ydiagram{1}^{\otimes -n})=1\quad \text{and} \quad \dim\Hom_{\mathcal{C}(\mathfrak{sl}_N,k)} (A \to \ydiagram{1}^{\otimes n+1} \otimes \ydiagram{1}^{\otimes -(n+1)})>1.\]
\end{defn}

We should roughly think of super-transitivity as a measure of where the ``new stuff'' appears in the category $\mathcal{C}(\mathfrak{sl}_N, k)_A$ relative to $\mathcal{C}(\mathfrak{sl}_N, k)$, with respect to the free module functor and the vector representation. More precisely if $A$ is $n$-super-transitive, then the fusion graph for $\ydiagram{1}\in \mathcal{C}(\mathfrak{sl}_N, k)$ agrees with the fusion graph for $A\otimes \ydiagram{1}\in \mathcal{C}(\mathfrak{sl}_N, k)_A$ up to depth $n$. Note that it is not clear from the definition that every \`etale algebra in $\mathcal{C}(\mathfrak{sl}_N,k)$ is $n$-super-transitive for some $n\in \mathbb{N}$. It is immediate from the definition that an \`etale algebra has super-transitivity if and only if it has non-trivial support in the 0-graded component of $\mathcal{C}(\mathfrak{sl}_N, k)$. From the theory of algebra objects in graded tensor categories \cite{iwenttodmitrisofficeandaskedhim}, this implies that every non-pointed \`etale algebra object is $n$-super-transitive for some $n\in \mathbb{N}$.

As of the time of writing, the known examples of non-pointed \`etale algebra objects in the categories $\mathcal{C}(\mathfrak{sl}_N,k)$ consists of three infinite families, along with a handful of (as of now) sporadic examples. The majority of these \`etale algebras come from the theory of conformal inclusions. We provide a summary of this correspondence in Subsection~\ref{sub:CI}. We point the reader to \cite{ModulesPt2} for a summary of these \`etale algebras. Using the notation of this paper (which is recalled in Subsection~\ref{sub:CI}) we have the following super-transitivities for these known examples. We exclude level-rank duals \cite{Mirror,lrdual} (as this construction preserves super-transitivity) and extensions of algebras (as this construction decreases super-transitivity in general, and preserves it in these special cases) from this list\footnote{We also exclude the example $A_{\mathfrak{so}_{70}}\in \mathcal{C}(\mathfrak{sl}_8,10)$, as we were unable to find the branching rules for this conformal embedding in the literature. Curiously, the rank and level of this example exactly line up with the singular missing case in our main theorem.}
\[  
\begin{tabular}{c|c}
     $n$&  \`Etale algebras $A$ which are $n$-super-transitive \\\hline \hline
    1 & $A_{\mathfrak{so}_{N^2-1}}\in \mathcal{C}(\mathfrak{sl}_N, N)$\\
     & $A_{\mathfrak{sl}_{N(N+1)/2}}\in \mathcal{C}(\mathfrak{sl}_N, N+2)$  \\\hline
    2& $A_{\mathfrak{so}_{5}}\in \mathcal{C}(\mathfrak{sl}_2, 10)$\\
     &$A_{\mathfrak{e}_{6}}\in \mathcal{C}(\mathfrak{sl}_3, 9)$\\
     &$A_{\mathfrak{so}_{20}}\in \mathcal{C}(\mathfrak{sl}_4, 8)$\\
     &$A_{\mathfrak{sp}_{20}}\in \mathcal{C}(\mathfrak{sl}_6, 6)$\\\hline
      3& $A_{\mathfrak{e}_{7}}\in \mathcal{C}(\mathfrak{sl}_3, 21)$\\
      & $A_{\text{Shellekens}}\in \mathcal{C}(\mathfrak{sl}_7, 7)$\\ \hline
    4  &  $A_{\mathfrak{g}_{2}}\in \mathcal{C}(\mathfrak{sl}_2, 28)$
\end{tabular}
\]
While our list of known \`etale algebra objects in the categories $\mathcal{C}(\mathfrak{sl}_N,k)$ is almost certainly incomplete, it is very tempting to make conjectures based on the above table. The following is a slight refinement of a conjecture made by the first author and Noah Snyder. 
\begin{conj}\label{conj:CN}
    Consider the set $\mathcal{A}$ of all non-pointed \`etale algebra objects in the categories $\mathcal{C}(\mathfrak{sl}_N,k)$ running over both $N$ and $k$. Then \begin{enumerate}[i)]
        \item there exist only finitely many $A\in \mathcal{A}$ with super-transitivity strictly greater than 1, and
        \item if $A\in \mathcal{A}$ is 1-super-transitive, then $A$ is one of the \`etale algebras coming from the conformal embeddings 
        \begin{align*}
            \mathcal{V}(\mathfrak{sl}_{N}, N-2)&\subseteq \mathcal{V}(\mathfrak{sl}_{N(N-1)/2},1)\\
            \mathcal{V}(\mathfrak{sl}_{N}, N)&\subseteq \mathcal{V}(\mathfrak{so}_{N^2-1},1)\\
            \mathcal{V}(\mathfrak{sl}_{N}, N+2)&\subseteq \mathcal{V}(\mathfrak{sl}_{N(N+1)/2},1),
        \end{align*}
        or of a simple current extension (see \cite{simplesyrup}) of one of these embeddings.
    \end{enumerate}
\end{conj}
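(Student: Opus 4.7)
The plan is to attack part (ii); part (i) appears to require rank-independent effective bounds that are not currently available, so I would leave it as a conjecture. Let $A = \mathbb{1} \oplus \bigoplus_i X_i$ be a 1-super-transitive \'etale algebra in $\mathcal{C}(\mathfrak{sl}_N,k)$. My first move is to unpack the equivalent definition: for $N \geq 3$ and $k \geq 2$ one has $\ydiagram{1} \otimes \ydiagram{1}^{\otimes -1} \cong \mathbb{1} \oplus \mathrm{ad}$, so the first equation says exactly that the adjoint is not a summand of $A$, while the second forces at least one non-trivial simple $X_i$ to appear as a constituent of $\ydiagram{1}^{\otimes 2}\otimes \ydiagram{1}^{\otimes -2}$. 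The small cases $N=2$ and $k=1$ are handled separately by hand.

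Next I would enumerate candidates for this ``first new'' summand $X$. Decomposing $\ydiagram{1}^{\otimes 2} = \ydiagram{2} \oplus \ydiagram{1,1}$ and dualising, the simple constituents of $\ydiagram{1}^{\otimes 2}\otimes \ydiagram{1}^{\otimes -2}$ fall into a short explicit list of Young diagrams (roughly of hook or near-hook shape with two rows of length $2$ and two columns of ``co-length'' $2$). After discarding $\mathbb{1}$ and the forbidden $\mathrm{ad}$, each remaining candidate $X_\lambda$ must satisfy the twist-triviality constraint $\theta_\lambda = 1$ required of any summand of an \'etale algebra. Via the Casimir formula $\theta_\lambda = \exp(\pi i \langle \lambda, \lambda + 2\rho\rangle /(N+k))$, this becomes an explicit divisibility condition relating $N$, $k$, and the shape of $\lambda$. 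The target of this step is to show that, as $\lambda$ ranges over the short list of candidate shapes, only three parametric families of solutions $(N,k)$ arise, together with at most a finite sporadic set that must be checked individually.

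Having isolated the seed $X$ and the allowed $(N,k)$, I would reconstruct the full algebra structure on $A$ using standard fusion-categorical inputs: $\mathrm{FPdim}(A)$ must divide $\mathrm{FPdim}(\mathcal{C}(\mathfrak{sl}_N,k))$, $A$ must be self-dual, the multiplication must be commutative, and $A \otimes A$ must contain $A$ with appropriate multiplicity. For each family the resulting $A$ should be matched to one of the three conformal embeddings $\mathcal{V}(\mathfrak{sl}_N, N-2) \subseteq \mathcal{V}(\mathfrak{sl}_{N(N-1)/2},1)$, $\mathcal{V}(\mathfrak{sl}_N, N) \subseteq \mathcal{V}(\mathfrak{so}_{N^2-1},1)$, or $\mathcal{V}(\mathfrak{sl}_N, N+2) \subseteq \mathcal{V}(\mathfrak{sl}_{N(N+1)/2},1)$, possibly after a simple current extension.

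The main obstacle will be the rank-agnostic twist-divisibility analysis. The Casimir $\langle \lambda, \lambda + 2\rho\rangle$ grows with the support of $\lambda$, but $N+k$ may grow as well, so crude bounds do not cut out a finite list. A careful shape-by-shape analysis, combined with lower bounds on $\langle \lambda, \lambda + 2\rho\rangle$ in terms of $N$ that force $k$ into a bounded range, should isolate the three predicted families. A secondary difficulty is controlling the remaining summands $X_i$ of $A$ beyond the first new one without invoking a rank-dependent input; here I expect uniform Frobenius--Perron estimates at low depth, together with the commutative algebra structure, to suffice, but this is where hidden sporadic solutions (such as the $A_{\mathfrak{so}_{70}} \in \mathcal{C}(\mathfrak{sl}_8,10)$ example flagged in the footnote) are most likely to lurk.
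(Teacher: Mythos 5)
Your opening moves track the paper's Section 3 closely: the observation that the adjoint $(\ydiagram{1},\ydiagram{1})$ can never be a summand (its twist $q^{2N}$ is never trivial), the enumeration of the six simple constituents of $\ydiagram{1}^{\otimes 2}\otimes \ydiagram{1}^{\otimes -2}$, and the twist-triviality analysis forcing $k\in\{N-2,N,N+2\}$ (with a small sporadic set where the method is silent) are all exactly what the paper does, and your worry about the rank-agnostic divisibility analysis is resolved there rather cleanly: each candidate shape gives a single equation of the form $q^{4N+c}=1$ with $c\in\{-4,0,4\}$, which pins $k$ immediately. Handling $N\le 7$ or $k\le 7$ by citing Gannon is also how the paper closes out the small cases.

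The genuine gap is your final step. You propose to ``reconstruct the full algebra structure on $A$'' from FPdim divisibility, self-duality, commutativity, and associativity constraints, and then match $A$ to a conformal embedding. This will not work, and the paper says so explicitly: it has ``no hope of fully determining the object structure of $A$'' by such direct arguments. The problem is that the twist condition only constrains which simples \emph{may} appear in $A$, and at depths beyond $3$ the set of twist-trivial simples grows without any uniform control; FPdim and self-duality constraints do not cut this down to a finite list, let alone a unique answer, in a rank-independent way. The paper's actual resolution is a change of venue: it works in the module category $\mathcal{C}_A$ rather than with the object $A$ itself. Using the depth-$\le 3$ fusion graph of $\mathcal{F}_A(\ydiagram{1})$ (which \emph{is} determined by the twist and adjunction arguments, up to finitely many possibilities), it produces explicit bases of $\operatorname{End}_{\mathcal{C}_A}(\ydiagram{1}^{\otimes 3})$, computes the structure constants of a convolution product on the $2$-box space, derives the full set of $3$-strand relations, and then exhibits a faithful dominant monoidal functor from a known generators-and-relations presentation ($\mathcal{SE}_N$ or $\mathcal{SD}^{\pm}_N$) into $\mathcal{C}_A$. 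This shows $A_{\mathfrak{g}}$ is an \`etale subalgebra of $A$, and the extensions of $A_{\mathfrak{g}}$ are classified (they are all simple current extensions because $\mathcal{C}(\mathfrak{g},1)$ has only pointed or Ising-type local module categories). Two further pieces of work you do not anticipate: for $k=N\pm 2$ there are \emph{two} candidate fusion graphs at depth $3$, and one must be eliminated by deriving a contradiction among the $3$-strand relations; and the whole strategy presupposes the presentations $\mathcal{SD}^{\pm}_N$, whose identification with the conformal embedding module categories is itself a theorem proved in the paper. Without some substitute for this skein-theoretic reconstruction, your step from ``the depth-$2$ seed of $A$ is known'' to ``$A$ is the conformal embedding algebra'' is unsupported.
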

Note that a constructive proof of this conjecture would resolve the classification of all \`etale algebras in the categories $\mathcal{C}(\mathfrak{sl}_N,k)$. A slightly weaker conjecture (which mimics the similar conjecture made regarding super-transitivity for subfactors) is as follows.
\begin{conj}
    There exists $K \in \mathbb{N}$ such that all non-pointed \`etale algebra objects in the categories $\mathcal{C}(\mathfrak{sl}_N,k)$ have super-transitivity less than or equal to $K$.
\end{conj}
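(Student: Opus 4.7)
The plan is to bootstrap from the structural consequences of the equivalent formulation in the definition. First I would unpack that formulation: if $A$ is $n$-super-transitive, then the induction--restriction (Bratteli) diagram of $\ydiagram{1}^{\otimes m}$ for $m \le n$ in $\mathcal{C}(\mathfrak{sl}_N,k)_A$ coincides with that in $\mathcal{C}(\mathfrak{sl}_N,k)$. In particular, the Frobenius--Perron dimension of each simple constituent of $\mathcal{F}_A(\ydiagram{1}^{\otimes m})$ matches that of the corresponding simple in $\mathcal{C}(\mathfrak{sl}_N,k)$, and since $\mathrm{FPdim}(\mathcal{F}_A(X)) = \mathrm{FPdim}(A)\cdot\mathrm{FPdim}(X)$, this converts into cyclotomic-integer identities tying $\mathrm{FPdim}(A)$ to Verlinde character values at level $k$.

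Next I would combine this with Gannon's effective level bound \cite{LevelTerry} to reduce, for each fixed rank $N$ and candidate super-transitivity $n$, to a finite list of levels $k$ admitting an $n$-super-transitive non-pointed algebra. The subtle direction is in $N$: for $N > n$ the fusion subring generated by $\ydiagram{1}$ truncated at depth $n$ stabilises to a quotient of a symmetric-function ring, so the super-transitivity condition becomes essentially rank-independent. I would exploit this stabilisation (together with a Deligne-type interpolation across $N$) to produce, for each $n$, a finite list of candidate algebra structures, then use the dimension identities from the first step plus Gannon's bound to cut each candidate down to finitely many realisations in $\mathcal{C}(\mathfrak{sl}_N,k)$.

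The main obstacle is ruling out a hypothetical sequence $(A_i, N_i, k_i)$ whose super-transitivities tend to infinity along a sublocus of $(N,k)$ where the combinatorial constraints degenerate. The approach I would try is to exploit a dimension gap: on the one hand, an $n$-super-transitive non-pointed algebra must contain a simple summand whose highest weight is ``invisible'' through $n$ tensor layers of $\ydiagram{1}$, forcing its quantum dimension (and hence $\mathrm{FPdim}(A)$) to grow rapidly with $n$; on the other hand, $\mathrm{FPdim}(A)^2$ divides the modular global dimension of $\mathcal{C}(\mathfrak{sl}_N,k)$. Pinning down this tension into a genuine contradiction, uniformly in $(N,k)$, is the step I expect to be genuinely difficult, and it is essentially equivalent to the finiteness statement in Conjecture \ref{conj:CN}(i); a proof of that stronger conjecture would in particular yield the bound $K$ sought here.
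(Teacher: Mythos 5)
The statement you are addressing is stated in the paper as a \emph{conjecture}, and the paper offers no proof of it: the main theorem only establishes part (ii) of Conjecture~\ref{conj:CN} (the classification of 1-super-transitive algebras), while the uniform bound $K$ on super-transitivity is left open. So there is no argument in the paper to compare yours against, and your proposal must stand on its own.

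As a proof it has a genuine gap, which you yourself identify in the final sentence: the entire weight of the argument rests on the ``dimension gap'' step, and you concede that closing it is ``essentially equivalent to the finiteness statement in Conjecture~\ref{conj:CN}(i).'' A proof that defers its only hard step to an open conjecture at least as strong as the target statement is not a proof. Concretely, the tension you hope to exploit does not obviously materialize: an $n$-super-transitive algebra must contain a nontrivial summand supported in $\ydiagram{1}^{\otimes (n+1)}\otimes\ydiagram{1}^{\otimes -(n+1)}$, whose quantum dimension indeed grows with $n$, but the constraint $\operatorname{FPdim}(A)^2 \leq \operatorname{FPdim}(\mathcal{C}(\mathfrak{sl}_N,k))$ is weak because the global dimension of $\mathcal{C}(\mathfrak{sl}_N,k)$ itself grows without bound in both $N$ and $k$; no uniform contradiction follows without substantial new input. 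The intermediate steps are also only gestured at: Gannon's effective bound controls $k$ for each fixed $N$ but says nothing uniform in $N$, and the claimed ``stabilisation'' of the depth-$n$ fusion data for $N>n$ together with a ``Deligne-type interpolation'' is precisely the kind of rank-agnostic control the paper identifies as the open difficulty; it is asserted, not established. The proposal is a reasonable research outline, but it does not prove the conjecture.
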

Based on the above table one may even suggest that $K=4$ is the universal bound of super-transitivity.

The main result of this paper is to prove part ii) of Conjecture~\ref{conj:CN}. To state our main theorem as cleanly as possible we recall the bijection from \cite[Section 3.6]{LagrangeUkraine} for a fixed \`etale algebra $A\in \mathcal{C}$ 
\[  \{B \in \mathcal{C}_A^0: B  \text{ is an \`etale algebra} \}  \cong \{B\in \mathcal{C}: A \subseteq B \text{  is an extension of \`etale algebras}\}.          \]
This bijection in the forward direction is given by the forgetful functor. 
\begin{defn}
    We will refer to \`etale algebra extensions of $A$ which correspond to pointed algebra objects in $\mathcal{C}_A^0$ under the above bijection as \textit{simple current extensions}.
\end{defn}
Our main theorem is the following.
\begin{thm}\label{thm:main}
Let $N,k \geq 2$ with $(N,k) \notin \{(8,10), (10,8)\}$ and let $A\in \mathcal{C}(\mathfrak{sl}_N, k)$ be a $1$-super-transitive \`etale algebra object. Then either:
\begin{enumerate}[(i)]
    \item $k = N-2$ and $A$ is a simple current extension of $A_{\mathfrak{sl}_{N(N-1)/2}}$, or 
    \item $k = N$ and $A$ is a simple current extension of $A_{\mathfrak{so}_{N^2-1}}$, or 
    \item $k = N+2$ and $A$ is a simple current extension of $A_{\mathfrak{sl}_{N(N+1)/2}}$.
    \end{enumerate}
\end{thm}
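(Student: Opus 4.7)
The plan is to use $1$-super-transitivity to pin down the level $k$ as one of three specific values depending on $N$ via a computation of conformal dimensions, and then identify $A$ as an extension of the canonical algebra coming from the corresponding conformal embedding.

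First, I would decompose $\ydiagram{1}\otimes\ydiagram{1}^{\otimes -1} = \mathbf{1}\oplus L_{\omega_1+\omega_{N-1}}$. Since the conformal dimension of the adjoint summand is $h_{\omega_1+\omega_{N-1}} = N/(k+N)$, never a positive integer for $k\geq 1$, the adjoint cannot appear in an \'etale algebra, and the depth-1 equality in the definition of $1$-super-transitivity is automatic. Next, I would decompose $\ydiagram{1}^{\otimes 2}\otimes\ydiagram{1}^{\otimes -2}$ via the Pieri rule; for $N$ sufficiently large, its non-trivial, non-adjoint simple summands are exactly the four weights
\[
L_{2\omega_1+2\omega_{N-1}},\quad L_{\omega_2+\omega_{N-2}},\quad L_{2\omega_1+\omega_{N-2}},\quad L_{\omega_2+2\omega_{N-1}}.
\]
The strict depth-$2$ inequality then forces at least one of these four simples to appear as a summand of $A$.

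The heart of the argument is a conformal-dimension calculation: for each candidate weight I would compute $h_\lambda = \langle\lambda,\lambda+2\rho\rangle/(2(k+N))$ using $\langle\omega_i,\omega_j\rangle = \min(i,j)(N-\max(i,j))/N$, giving
\[
h_{2\omega_1+2\omega_{N-1}} = \tfrac{2(N+1)}{k+N},\qquad h_{\omega_2+\omega_{N-2}} = \tfrac{2(N-1)}{k+N},\qquad h_{2\omega_1+\omega_{N-2}} = h_{\omega_2+2\omega_{N-1}} = \tfrac{2N}{k+N}.
\]
Requiring each of these to be a positive integer, and using $k\geq 2$, essentially forces $k+N$ to equal the corresponding numerator, yielding the three possibilities $k=N-2$, $k=N$, and $k=N+2$, together with an explicit small list of sporadic divisor solutions at small $N$ that can be enumerated and handled directly. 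At each of the three levels, the summand $L_\lambda$ forced into $A$ matches the generator of the canonical \'etale algebra $B$ produced by the conformal embedding in the theorem, and the bijection from the introduction between algebra extensions of $B$ and pointed algebras in $\mathcal{C}(\mathfrak{sl}_N,k)_B^0$ then identifies $A$ as a simple current extension of $B$.

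The main obstacle I anticipate is this very last step: arguing that \emph{every} $1$-super-transitive $A$ is truly an extension of the expected $B$, rather than a sporadic \'etale algebra that merely shares the depth-$2$ summand $L_\lambda$ with $B$. This is exactly where the excluded pairs $(8,10)$ and $(10,8)$ enter: the footnote on $A_{\mathfrak{so}_{70}}\in\mathcal{C}(\mathfrak{sl}_8,10)$ (coming from $\mathfrak{sl}_8\hookrightarrow\mathfrak{so}_{70}$ via $\Lambda^4$) signals exactly such a sporadic candidate at $(8,10)$, whose distinction from the expected $\mathrm{Sym}^2$-extension apparently requires modular data beyond the depth-$2$ profile, and the theorem sidesteps this by excluding these two level-rank dual pairs from its statement.
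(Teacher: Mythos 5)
Your first half is essentially the paper's argument: the paper also decomposes $\ydiagram{1}\otimes\ydiagram{1}^{*}$ to see that the adjoint summand $(\ydiagram{1},\ydiagram{1})$ can never lie in $A$ (so $\ydiagram{1}$ stays simple in $\mathcal{C}_A$), and then decomposes $\ydiagram{1}^{\otimes 2}\otimes\ydiagram{1}^{\otimes -2}$ and uses triviality of the twist $\theta_\lambda=e^{2\pi i h_\lambda}$ on summands of an \`etale algebra to force $k\in\{N-2,N,N+2\}$ (Proposition~\ref{prop:levels}); your conformal-dimension computation is the same computation in different clothing, and in fact there are no sporadic divisor solutions to enumerate once $N,k\geq 2$.

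The genuine gap is your final step, and you have correctly located it yourself. Knowing that $A$ contains the single simple summand $(\ydiagram{2},\ydiagram{1,1})$ (say) in common with $A_{\mathfrak{so}_{N^2-1}}$ does \emph{not} let you invoke the bijection between extensions of $B$ and \`etale algebras in $\mathcal{C}_B^0$: that bijection presupposes an algebra inclusion $B\subseteq A$, which is exactly what must be established. Sharing a summand of the underlying object says nothing about the multiplication. Closing this gap is the bulk of the paper (Sections~\ref{sec:ini}--\ref{sec:kN2}): one first bounds the multiplicity of the depth-2 and depth-3 summands of $A$ using a depth-3 twist analysis (Lemma~\ref{lem:twistanalysis}) and adjunction/fusion-graph arguments, pinning down the fusion graph of $\mathcal{F}_A(\ydiagram{1})$ up to depth 3 to at most two possibilities; then, via explicit skein-theoretic computations in $\End_{\mathcal{C}_A}(\ydiagram{1}^{\otimes 3})$ (matrix units, the convolution product and its structure constants, a 24- or 15-dimensional basis of annular consequences), one either derives a contradiction ruling out a candidate graph or constructs an explicit dominant monoidal functor from the known presentation $\overline{\mathcal{SE}_N}$ or $\overline{\mathcal{SD}^{\pm}_N}$ of $\mathcal{C}(\mathfrak{sl}_N,k)_{A_\mathfrak{g}}$ into $\mathcal{C}_A$. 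Only this functor yields $A_\mathfrak{g}\subseteq A$, after which the simple-current conclusion follows as you describe. None of this machinery appears in your proposal.

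A smaller correction: the exclusion of $(8,10)$ and $(10,8)$ is not a matter of needing ``modular data beyond the depth-2 profile.'' It occurs because the depth-3 twist analysis degenerates there --- at $(N,k)=(8,10)$ the object $(\ydiagram{1,1,1},\ydiagram{1,1,1})$ acquires trivial twist, and at $(10,8)$ the object $(\ydiagram{3},\ydiagram{3})$ does --- so extra summands of $A$ cannot be excluded and the fusion-graph bound at depth 3 fails. The coincidence with $A_{\mathfrak{so}_{70}}$ is noted in the paper only as a curiosity.
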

The proof of our main theorem is completed in three cases. These are Theorem~\ref{thm:classN}, Theorem~\ref{thm:classN-2}, and Theorem~\ref{thm:classN+2}.
\begin{rmk}
    We point out that our proof of the above theorem relies on bounds of $N,k\geq 7$. However with explicit classification of \`etale algebras in $\mathcal{C}(\mathfrak{sl}_N,k)$ known for $N \leq 7$ or $k\leq 7$ due to Gannon \cite{LevelTerry}, we can extend our results to the general case of $N,k\geq 2$. These results of Gannon also rule out several special pairs of $(N,k)$ where our techniques fail (namely Lemma~\ref{lem:twistanalysis}). With additional computational power, the techniques of Gannon would work to also resolve the missing cases of $(8,10)$ and $(10,8)$ in our main theorem.
\end{rmk}
We note that the simple current extensions appearing in the above theorem are fully understood and classified. 
\begin{rmk}
 We recall from \cite[Theorem 5.2]{kril} that 
 \[ \mathcal{C}(\mathfrak{sl}_N,k)_{A_\mathfrak{g}}^0 \cong \mathcal{C}(\mathfrak{g}, 1).    \]
  For $\mathcal{C}(\mathfrak{so}_{N^2-1}, 1)$ we have that this category only has the trivial \`etale algebra for $N\not\equiv \pm 1\pmod 8$, and for $N\equiv \pm 1\pmod 8$ these categories have three pointed \`etale algebras, all of which are pointed. Using the explicit branching rules for the forgetful functor $\mathcal{C}(\mathfrak{so}_{N^2-1}, 1) \to \mathcal{C}(\mathfrak{sl}_{N}, N)$ \cite[Remark 4.2.2]{KacMan} we see that the underlying objects of the two non-trivial extensions of $A_{\mathfrak{so}_{N^2-1}}$ are both
   \[      A_{\mathfrak{so}_{N^2-1}} \oplus \left(2^{\frac{N-3}{2}}\right )\cdot V_{ [N-1, N-2, \cdots, 1]   }.            \]
  For the categories $\mathcal{C}(\mathfrak{sl}_{N(N \pm 1)/2}, 1)$ we have that \`etale algebras are classified by divisors $m \mid N(N \pm 1)/2$ such that $m^2 \mid N(N \pm 1)/2$ if $N(N \pm 1)/2$ is odd, and such that $2m^2 \mid N(N \pm 1)/2$ if $N(N \pm 1)/2$ is even. The explicit structure of the object structure of the corresponding \`etale algebra extensions of $A_{\mathfrak{sl}_{N(N\pm 1)/2}}$ can be determined using the branching rules for $\mathcal{C}(\mathfrak{sl}_{N(N\pm 1)/2}, 1) \to \mathcal{C}(\mathfrak{sl}_{N}, N \pm 2)$ from \cite{LevLaughLib}.
\end{rmk}

Let us now outline our proof of Theorem~\ref{thm:main}, and the structure of the paper.

In Section~\ref{sec:pre} we recall the relevant facts about the categories $\mathcal{C}(\mathfrak{sl}_N,k)$, with a particular focus on the ``$GL(N)$" labeling set of the simples which is independent of $N$. We also recall the Kazhdan-Wenzl presentation for the categories $\mathcal{C}(\mathfrak{sl}_N,k)$, which will be a key ingredient in our later proofs. We review the basics of \`etale algebra objects in braided tensor categories, and the construction and properties of the tensor category of module objects internal to the braided tensor category. Finally we discuss the \`etale algebra objects in $\mathcal{C}(\mathfrak{sl}_N,k)$ coming from the theory of conformal inclusions, and recall presentations for the corresponding categories of modules for the three infinite families.

In Section~\ref{sec:ini} we obtain initial results on the structure of 1-super-transitive algebra objects $A$. While we have no hope of fully determining the object structure of $A$ using the arguments of this section, we are able to determine the intersection of $A$ with $\ydiagram{1}^{\otimes 3} \otimes \ydiagram{1}^{\otimes -3}$. The key obstruction here is to use that the simple summands of an \`etale algebra in $\mathcal{C}(\mathfrak{sl}_N,k)$ have trivial twist. This fact gives us strong restrictions on what simple summands may appear in $A$. We then use various adjunction arguments to further bound the multiplicities of the summands which are allowed to occur in $A$. We show that for $\mathcal{C}(\mathfrak{sl}_N,k)$ to have a 1-super-transitive \`etale algebra object $A$, then we must have that $k \in \{N-2, N, N+2\}$. Further, in each of these three cases we determine the fusion graph (up to a finite number of possibilities) for $\mathcal{F}_A(\ydiagram{1}) \in \mathcal{C}(\mathfrak{sl}_N,k)_A$ up to depth three.

In Section~\ref{sec:kN} we focus on the case of $k=N$. Here we obtain a basis for $\operatorname{End}_{\mathcal{C}(\mathfrak{sl}_N,N)_A}(\mathcal{F}_A(\ydiagram{1})^{\otimes 3})$. This basis allows us to compute non-trivial three-strand relations in the category $\mathcal{C}(\mathfrak{sl}_N,N)_A$. A key ingredient in these computations is the \textit{convolution product} on $\operatorname{End}_{\mathcal{C}(\mathfrak{sl}_N,N)_A}(\mathcal{F}_A(\ydiagram{1})^{\otimes 2})$. This is an associative algebra structure on these spaces, whose structure constants we can explicitly pin down via skein theory considerations. Using the known presentation for the category $\mathcal{C}(\mathfrak{sl}_N,N)_{A_{\mathfrak{so}_{N^2-1}}}$, and the three strand relations we have computed, we can explicitly produce a functor from this known presentation into the category $\mathcal{C}(\mathfrak{sl}_N,N)_A$. It then follows by general theory that $A$ is an \`etale algebra extension of $ A_{\mathfrak{so}_{N^2-1}}$.

In Section~\ref{sec:kN2} we repeat our analysis in the remaining cases of $k = N \pm 2$. For each of these two cases we have two possible fusion graphs up to depth 3. For one of these graphs we run a similar game plan as in Section~\ref{sec:kN}. However in this case we obtain contradictions in the three strand relations, showing non-existence. For the remaining graph we can use a simple skein theoretic argument to show that there is an invertible object at depth 2 in the fusion graph. This allows us to produce a functor from the category $\mathcal{C}(\mathfrak{sl}_N,k)_{A_{\mathfrak{sl}_{N(N\pm 1)/2}}}$ into $\mathcal{C}(\mathfrak{sl}_N,N\pm2)_A$. As before, this implies that $A$ is an \`etale algebra extension of $A_{\mathfrak{sl}_{N(N\pm 1)/2}}$.
\subsection*{Acknowledgments}
CE would like to thank Dmitri Nikshych and Dave Penneys for helpful references. JK would like to give a special thanks to their family for supporting their academic journey, with a special thank you to their mother for being their biggest supporter. CE was supported by NSF DMS grant 2400089.
\section{Preliminaries}\label{sec:pre}
We refer the reader to \cite{book} the basics on tensor categories, and to \cite{kril,LagrangeUkraine} for the basics on \`etale algebra objects in braided tensor categories.

\subsection{The Categories \texorpdfstring{$\mathcal{C}(\mathfrak{sl}_N, k)$}{C(sl(N),k)}}\label{sub:sl}

One of the key objects of study in this paper are the modular tensor categories $\mathcal{C}(\mathfrak{sl}_N, k)$. These are the categories of integrable level-$k$ representations of $\widehat{\mathfrak{sl}_N}$ where $k \in \mathbb{N}$ \cite{SisiBakBak}. The tensor product on this category is level preserving fusion \cite{KL1}. Equivalently, one has that $\mathcal{C}(\mathfrak{sl}_N, k)$ is braided equivalent to $\overline{\operatorname{Rep}(U_q(\mathfrak{sl}_N))}$ with $q = e^{2 \pi i \frac{1}{2(N+k)}}$ \cite{MR1384612}.

The simple objects of $\mathcal{C}(\mathfrak{sl}_N, k)$ are typically labeled by certain highest weights, or equivalently certain Young diagrams. More precisely we have
\[  \operatorname{Irr}(\mathcal{C}(\mathfrak{sl}_N, k)) \cong \{ V_\lambda : \lambda \text{ is a Young diagram with } \lambda_1 \leq k \text{ and } \ell(\lambda) <N    \} .    \]
While this labeling set is convenient for computing tensor powers, it is less than ideal for computing duals. We have that $(V_\lambda)^* \cong V_{\lambda^*}$ where 
\[ \lambda^* = (\lambda_1, \lambda_1 - \lambda_{N-1}, \lambda_1 -  \lambda_{N-2} ,\cdots, \lambda_1 - \lambda_2).        \]
For the purpose of this paper we instead work with the ``$GL(N)$-style'' labeling set of pairs of Young diagrams. For a fixed $N$ this relabeling is given by the map
\[   (\lambda, \mu) \mapsto (\lambda_1 + \mu_1 , \lambda_2 + \mu_1 - \mu_{N-1} , \lambda_3 + \mu_1 - \mu_{N-2}, \cdots, \lambda_{N-1} +\mu_1 - \mu_2 ).     \]
Note that this new labeling set is not injective. i.e. $(  [1], [1^{N-1}]) = ( [2], \emptyset)$. However it is injective if we restrict to $(\lambda, \mu)$ with $\ell(\lambda) + \ell(\mu) < N$. The benefit of this relabeling is that duals behave nicely. It is immediate that our relabeling satisfies $(V_{\lambda, \mu})^* \cong V_{\mu, \lambda}$. In the case that $\mu = \emptyset$ we have $V_{\lambda,\emptyset} = V_\lambda$. In this setting we will write $\lambda$ as shorthand for $(\lambda, \emptyset)$.

The twist of a simple object in $\mathcal{C}(\mathfrak{sl}_N, k)$ can be computed in terms of its highest weight and the Killing form. In terms of the standard basis $\{\epsilon_i : 1\leq i < N\}$ for the weight space of $\mathfrak{sl}_N$ we have that the highest weight for $V_\lambda$ is $\sum_{i=1}^{N-1} \lambda_i \epsilon_i$. The Killing form on our basis (normalised so the positive roots have length $\sqrt{2}$) is given by 
\[    \langle \epsilon_i, \epsilon_j\rangle  = \begin{cases}
    \frac{N-1}{N} \quad \text{ if } i=j\\
    -\frac{1}{N} \quad \text{ if } i\neq j.
\end{cases}   \]
We then have that the twist of $V_\lambda$ is given by $q^{  \langle \lambda , \lambda + 2\rho\rangle  }$ where $q = e^{2 \pi i \frac{1}{2(N+k)}}$ and $\rho$ is the distinguished root $\sum_{i=1}^{N-1} (N-i)\epsilon_i$. These twists are tedious, but straight-forward to compute in practice\footnote{It appears that the general formula for the twist of $(\lambda, \mu)$ is $q^{ \frac{(|\lambda| + |\mu|)N^2 + (c_\lambda + c_\mu)N - (|\lambda| - |\mu|)^2}{N} }$ where $c_\lambda:= \sum_i \lambda_i (\lambda_i - 2i + 1)$. We do not use nor prove this formula in this paper.}.
\begin{ex}
    Consider the object labeled by $(\ydiagram{1}, \ydiagram{1})$ in $\mathcal{C}(\mathfrak{sl}_N, k)$. The corresponding weight is
    \[   ( 2,1,1,\cdots,1).              \]
    We then have
    \begin{align*} & \langle ( 2,1,1,\cdots,1) , ( 2N,2N-3,2N-5,\cdots,3) \rangle \\
    &= \frac{N-1}{N}(4N + \sum_{i=2}^{N-1}(2N - (2i - 1))) - \frac{4}{N}\sum_{i=2}^{N-1}(2N - (2i - 1)) - \frac{1}{N}\sum_{j=2}^{N-1}\sum_{i=2, i\neq j}^{N-1} 2N - (2j-1)\\
    &= (N^2 + N -2) - 4(N-2) - (N^2 - 5N + 6)\\
    &= 2N.
    \end{align*}
    Thus $\theta_{ (\ydiagram{1}, \ydiagram{1})} = q^{2N}$. The twist formulas for all other object in this paper are computed in an analogous manner. We do not include these other computations to keep the paper to a reasonable length.
\end{ex}
The fusion rules for $\mathcal{C}(\mathfrak{sl}_N, k)$ are complicated in general (see \cite{MR1328736} and \cite[Corollary 8]{sawin}). When needed we will state relevant fusion rules between simples. These are computed by noting that for Young diagrams $\lambda$ and $\mu$ with $\lambda_1 +\mu_1 < k$, the tensor product decomposition of $V_\lambda \otimes V_\mu$ agrees with the classical case in $\operatorname{Rep}(\mathfrak{sl}_N)$. The largest tensor product we need to compute in this paper will involve Young diagrams with 3 boxes, and so with an assumption of $k\geq 7$ these classical decompositions suffice.

For the purposes of this paper, it will useful to understand $\mathcal{C}(\mathfrak{sl}_N, k)$ as the Cauchy completion of a nice generators and relations category. This presentation for $\mathcal{C}(\mathfrak{sl}_N, k)$ is due to Kazhdan and Wenzl (we also direct the reader to \cite[Section 2.4]{dan} for the graphical interpretation of these results). 
\begin{defn}[\cite{sovietHans}]
    Let $q\in \mathbb{C} - \{0\}$ and $N\in \mathbb{N}_{\geq 2}$. We define $\mathcal{SH}(q,N)$ as the pivotal $\mathbb{C}$-linear monoidal $\dag$-category with objects strings in $\{+,-\}$, and morphisms generated by 
    \[         \att{braidX}{.25}\in \End_{\mathcal{SH}(q,N)}(+^2)      \quad  \text{and} \quad \att{kw}{.3}\in \Hom_{\mathcal{SH}(q,N)}( +^N \to \mathbf{1}).   \]
    The $\dag$-structure is given by the conjugate linear extension of the vertical flip of diagrams, and the relations are given by
    \begin{alignat*}{3}
\att{loop}{.15} &= \frac{q^N-q^{-N}}{q-q^{-1}} \qquad&  \att{R1}{.15}&= q^N\att{R12}{.15} & \att{R21}{.15} &= \att{R22}{.15}
\\
\att{R31}{.10} &= \att{R32}{.10}\qquad & \att{H11}{.15} - \att{H11Inv}{.15} &= (q-q^{-1})\att{H12}{.15} \\
\att{kwrel1}{.25} &= q \att{kwrel12}{.25}\qquad &   \att{kwrel2}{.25} &=  \att{kwrel22}{.25}
    \end{alignat*}
    Here we use the convention that a relation involving unoriented strands holds for all possible orientations. The term $p_{1^N}$ in the last relation denotes the unique minimal central idempotent in the $+^N$-box space onto the $N$-th anti-symmetriser (see \cite[Theorem 6]{Paggo} for an explicit formula in terms of braids).
\end{defn}
We then have the following, which is also due to Kazhdan and Wenzl.
\begin{prop}[\cite{sovietHans}]
    Let $N\in \mathbb{N}_{\geq 2}$ and $k \in \mathbb{N}$. Then 
    \[  \operatorname{Ab}(\overline{ \mathcal{SH}(q,N)   }) \simeq \mathcal{C}(\mathfrak{sl}_N, k)    \]
    where $q = e^{2\pi i \frac{1}{2(N + k)}}$. Under this equivalence, the objects $+$ and $-$ get sent to $(\ydiagram{1}, \emptyset)$ and $( \emptyset,\ydiagram{1})$ respectively.
\end{prop}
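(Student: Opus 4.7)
The plan is to construct an explicit monoidal $\dag$-functor $F : \mathcal{SH}(q,N) \to \mathcal{C}(\mathfrak{sl}_N, k)$ and then promote it to an equivalence on Cauchy completions. On objects I would set $F(+) := (\ydiagram{1}, \emptyset)$ and $F(-) := (\emptyset, \ydiagram{1})$, so $F$ sends a word to the corresponding tensor product of the vector representation $V$ and its dual $V^*$. On generators I would send the crossing to the braiding $R$-matrix of $U_q(\mathfrak{sl}_N)$ on $V\otimes V$ (with the standard normalisation so that the writhe twist coefficient is $q^N$), and the sink $+^N \to \mathbf{1}$ to the quantum determinant, i.e.\ the composition $V^{\otimes N} \twoheadrightarrow \Lambda^N_q V \cong \mathbf{1}$ coming from the quantum antisymmetriser.

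Second, I would verify the relations in turn. The loop value $[N]_q$ is just $\dim_q V$. The Reidemeister II/III relations and the HOMFLY skein relation $R - R^{-1} = (q-q^{-1})\,\mathrm{id}$ on $V\otimes V$ are immediate from the spectral decomposition $V\otimes V \cong \mathrm{Sym}^2_q V \oplus \Lambda^2_q V$ together with the standard form of the $R$-matrix on each summand. The last two ``vertex'' relations encode the fact that the antisymmetriser is invariant under (signed) braidings and is characterised as the minimal central idempotent $p_{1^N}$. Each of these is a short computation using the explicit action of $R$ on $V\otimes V$ or, equivalently, a calculation in the Hecke algebra $H_N(q)$ via the Jimbo--Schur--Weyl map.

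Third, and this is the real content of the theorem, I would establish that $F$ is fully faithful. For this, I would use the Kazhdan--Wenzl style argument: exhibit a spanning set for $\Hom_{\mathcal{SH}(q,N)}(+^{m}\otimes -^{n}, +^{m'}\otimes -^{n'})$ by planar diagrams (reduced using the given relations so that all $N$-valent vertices are pushed to a canonical position, leaving an oriented Hecke-type diagram), and then match its cardinality against $\dim \Hom_{\mathcal{C}(\mathfrak{sl}_N,k)}(F(+^{m}\otimes -^{n}), F(+^{m'}\otimes -^{n'}))$, which can be computed by Weyl's character formula combined with the level-$k$ truncation that is built into the choice $q = e^{2\pi i /2(N+k)}$. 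The hard part here is the upper bound on the dimension of the hom space on the $\mathcal{SH}(q,N)$ side: one must show that the listed relations suffice to reduce any diagram to the chosen spanning set. This is done by induction on the number of $N$-valent vertices, using the second ``vertex'' relation to move vertices past crossings and the $p_{1^N}$-identity to cancel pairs of adjacent opposite vertices.

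Finally, essential surjectivity after Cauchy completion is formal: every simple $V_{(\lambda,\mu)} \in \mathcal{C}(\mathfrak{sl}_N,k)$ appears as a direct summand of some $V^{\otimes m} \otimes (V^*)^{\otimes n}$ via an appropriate Jones--Wenzl / Young idempotent $e \in \End_{\mathcal{C}(\mathfrak{sl}_N,k)}(V^{\otimes m}\otimes (V^*)^{\otimes n})$, and by full faithfulness this idempotent lifts uniquely to an idempotent in $\mathcal{SH}(q,N)$, giving an object of $\operatorname{Ab}(\overline{\mathcal{SH}(q,N)})$ mapping isomorphically to $V_{(\lambda,\mu)}$. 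The main obstacle, as indicated above, is the upper bound in the full-faithfulness step; the other ingredients are standard verifications with the $R$-matrix and quantum antisymmetriser.
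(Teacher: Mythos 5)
The paper does not prove this proposition at all: it is quoted from Kazhdan--Wenzl \cite{sovietHans} (with the graphical reading taken from \cite{dan}), so there is no in-paper argument to compare against. Your outline is the standard strategy for such presentation theorems, and the first, second, and fourth steps (constructing $F$ on generators via the $R$-matrix and the quantum determinant, verifying the skein and vertex relations, and deducing essential surjectivity of the Cauchy completion from the fact that every simple occurs in some $V^{\otimes m}\otimes (V^*)^{\otimes n}$) are all sound.

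There is, however, a genuine gap in the third step: you aim to prove that $F:\mathcal{SH}(q,N)\to\mathcal{C}(\mathfrak{sl}_N,k)$ is \emph{fully faithful} by matching the cardinality of a diagrammatic spanning set against $\dim\Hom_{\mathcal{C}(\mathfrak{sl}_N,k)}$ computed ``with the level-$k$ truncation built in.'' This cannot work as stated, because the statement being proved passes through the semisimplification $\overline{\mathcal{SH}(q,N)}$, and at the root of unity $q=e^{2\pi i/2(N+k)}$ the hom spaces of $\mathcal{SH}(q,N)$ are strictly larger than those of the fusion category once enough strands are present: for $n\geq k+1$ the algebra $\End_{\mathcal{SH}(q,N)}(+^{n})$ still contains idempotents projecting onto the negligible (quantum-dimension-zero) constituents that the level-$k$ truncation discards, so $F$ has a nontrivial kernel and the two dimension counts do not agree. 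The diagrammatic reduction you describe can only give an upper bound equal to the \emph{generic} (classical Littlewood--Richardson / Hecke algebra) dimension. The correct argument is: establish that upper bound; observe that $F$ is dominant onto a semisimple pivotal target with nondegenerate trace pairing, so its kernel is a proper tensor ideal containing the negligibles; identify the kernel with the negligible ideal (e.g.\ because the negligibles form the unique maximal proper tensor ideal of an evaluable pivotal category, or by a direct trace-pairing argument); and only then conclude that the induced functor $\overline{\mathcal{SH}(q,N)}\to\mathcal{C}(\mathfrak{sl}_N,k)$ is fully faithful. Without this intermediate passage through the negligible quotient, the full-faithfulness claim is false for $F$ itself.
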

Here $\operatorname{Ab}$ denotes the Cauchy completion (see \cite[Section 2.2]{me}), i.e. the additive and idempotent completion, and the overline denotes the semisimplification (see \cite{simp}). We do not require too much knowledge of the details of the above equivalence. However we will require explicit formulas for the projections onto $\ydiagram{2}$ and $\ydiagram{1,1}$ in $\operatorname{End}_{\mathcal{SH}(q,N) }(+^2)$. These are
\[   \att{p2a}{.5}= \frac{1}{1+q^2}\left( \att{idid}{.5} + q\att{YY}{.5}  \right) \quad \text{and} \quad  \att{p22}{.5}= \frac{1}{1+q^{-2}}\left( \att{idid}{.5} - q^{-1}\att{YY}{.5}  \right)\]

\subsection{Tensor Categories from \`Etale Algebras}

Given \`etale algebra object $A$ in a braided tensor category $\mathcal{C}$, one can equip the category of left $A$-modules internal to $\mathcal{C}$ with the structure of a tensor category \cite[Theorem 1.5]{kril}. This associated tensor category will be a key tool we use to classify \`etale algebra objects in this paper.

We will write $\mathcal{C}_A$ for this tensor category. It is well-known that if $\mathcal{C}$ is a braided fusion category, then $\mathcal{C}_A$ is a fusion category, i.e. it is semisimple \cite[Section 2.4]{LagrangeUkraine} and has simple unit . Further, if $\mathcal{C}$ is unitary, then $\mathcal{C}_A$ is also unitary \cite{MR4616673}.

The free module functor $\mathcal{F}_A : \mathcal{C} \to \mathcal{C}_A$ defined by $X \mapsto A\otimes X$ can be equipped with the structure of a monoidal functor using the positive braiding on $\mathcal{C}$. In the case that $\mathcal{C}$ is unitary, the functor $\mathcal{F}_A$ is a $\dag$-functor. Furthermore, if $\theta_A = \operatorname{id}_A$ then $\mathcal{C}_A$ is rigid \cite[Theorem 1.15]{kril} and pivotal \cite[Theorem 1.17]{kril}, and the $\mathcal{F}_A$ is a pivotal functor (directly seen using the explicit pivotal structure from \cite{kril}). We also have the forgetful functor $\operatorname{For}: \mathcal{C}_A \to \mathcal{C}$ which simply forgets the module structure map. A recurring technique used through out this paper will be the adjunction between these functors \cite[Theorem 1.6]{kril}. That is for $X\in \mathcal{C}$ and $M \in \mathcal{C}_A$ we have
\[   \Hom_{\mathcal{C}}( X \to \operatorname{For}(M)) \cong   \Hom_{\mathcal{C}_A}( \mathcal{F}_A(X) \to M).       \]

In general the image of the braiding in $\mathcal{C}$ under the free module functor is no longer a braiding in $\mathcal{C}_A$. This is as the image may no longer be natural in both positions in general. However the image of the braiding is still natural in the ``under'' position. 

\begin{lem}\label{lem:OB}\cite[Lemma 2.4]{me} 
Let $\mathcal{C}$ be a braided tensor category, and $A$ an \`etale algebra object. Let $\mathcal{C}_A$ be the tensor category of of $A$-modules internal to $\mathcal{C}$. Then for any $f\in \operatorname{Hom}_{ \mathcal{C}_{A}}(\mathcal{F}_A(Y_1)\to \mathcal{F}_A(Y_2))$ we have that 
 \[\att{overproof2}{.3} =\att{overproof1}{.3} .  \] 
\end{lem}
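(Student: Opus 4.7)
The plan is to express $f$ via the free/forgetful adjunction and then reduce the pictured identity to naturality of the underlying braiding in $\mathcal{C}$ combined with commutativity of the algebra $A$. Concretely, since
\[\Hom_{\mathcal{C}_A}(\mathcal{F}_A(Y_1), \mathcal{F}_A(Y_2)) \cong \Hom_{\mathcal{C}}(Y_1, A\otimes Y_2),\]
there is a unique $\tilde f : Y_1 \to A\otimes Y_2$ in $\mathcal{C}$ such that $f = (\mu_A \otimes \operatorname{id}_{Y_2}) \circ (\operatorname{id}_A \otimes \tilde f)$, where $\mu_A : A\otimes A \to A$ is the multiplication. In pictures, $f$ has a single $A$-input on the left that gets absorbed into $\tilde f$ via $\mu_A$.

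First I would substitute this factorisation of $f$ into both of the pictured composites. Each side then becomes a morphism in $\mathcal{C}$ assembled out of $\mu_A$, $\tilde f$, a single braiding, and identity strands. The next step is to invoke naturality of the braiding in $\mathcal{C}$ to slide the outer crossing strand past $\tilde f$; this move is legitimate because $\tilde f$ lives in $\mathcal{C}$, not merely in $\mathcal{C}_A$, and so the full braiding is natural against it. After performing this slide on both diagrams, the two sides differ only in whether the external strand crosses the two inputs of $\mu_A$ over or under. This last discrepancy is precisely equalised by commutativity of the \'etale algebra $A$, that is, by the identity $\mu_A \circ \beta_{A,A} = \mu_A$, and both diagrams collapse to the same morphism.

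The main obstacle I foresee is purely diagrammatic book-keeping: one has to label the strands carefully so that the naturality move is applied across $\tilde f$ and the commutativity move is applied at the $\mu_A$ vertex, and check that the two pictures really do coincide up to these two manipulations. Once $f$ has been rewritten in terms of $\tilde f$ and $\mu_A$, however, the argument uses only standard graphical calculus together with the étale (hence commutative) hypothesis on $A$, and no further properties of $f$, $Y_1$, or $Y_2$ are needed.
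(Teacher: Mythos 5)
The paper does not reprove this statement; it simply imports it as \cite[Lemma 2.4]{me}, so the comparison here is with the standard argument in that reference, which your first two steps reproduce exactly: use the adjunction to produce $\tilde f\colon Y_1\to A\otimes Y_2$, recover $f=(\mu_A\otimes \operatorname{id}_{Y_2})\circ(\operatorname{id}_A\otimes\tilde f)$ from the fact that $f$ is a left $A$-module map, and slide the external strand across $\tilde f$ by naturality of the braiding in $\mathcal{C}$. Up to that point your proposal is correct and is the intended proof.

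The gap is in your final step. After the naturality slide, the residual discrepancy you describe is an external strand $X$ crossing \emph{over} versus \emph{under} the $A$-strands feeding into $\mu_A$. Commutativity of $A$, i.e.\ $\mu_A\circ\beta_{A,A}=\mu_A$, permutes the two $A$-inputs of the multiplication; it cannot convert an over-crossing of a third object $X$ with an $A$-strand into an under-crossing --- that would require $A$ to be transparent to $X$ (i.e.\ $c_{A,X}\circ c_{X,A}=\operatorname{id}$), which is not available and is not what ``\`etale'' gives you. The correct way to close the argument is to note that no such discrepancy arises: under the identification $\mathcal{F}_A(X)\otimes_A\mathcal{F}_A(Y)\cong\mathcal{F}_A(X\otimes Y)$, the positive-braiding convention defining the tensor structure on $\mathcal{C}_A$ routes the hidden $A$-output of $\tilde f$ past the external strand on the \emph{same} side in both pictures, so after the naturality slide the two diagrams already coincide. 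This is also where the one-sidedness of the lemma lives: your argument as written is symmetric in the two crossings and would ``prove'' naturality in both positions, contradicting the sentence immediately preceding the lemma in the paper. For the opposite crossing the routing of the $A$-strand does not match the convention, and no combination of naturality and commutativity repairs it. So you should replace the appeal to commutativity with the observation that the only properties used are that $f$ is an $A$-module map and that the crossing direction agrees with the braiding used to define $\otimes_A$.
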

This in particular implies the following well known commutativity result on the fusion ring for $\mathcal{C}_A$.
\begin{lem}\label{lem:com}
    For any $X\in \mathcal{C}$ and $M \in \mathcal{C}_A$ we have
    \[  \mathcal{F}_A(X) \otimes M \cong M \otimes \mathcal{F}_A(X).  \]
\end{lem}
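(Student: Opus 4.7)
I would construct the desired isomorphism as the image of a braiding on $\mathcal{C}$. First I would identify the underlying objects: since $\mathcal{F}_A(X) = A \otimes X$, the tensor products $\mathcal{F}_A(X) \otimes M$ and $M \otimes \mathcal{F}_A(X)$ in $\mathcal{C}_A$ have underlying objects $X \otimes M$ and $M \otimes X$ in $\mathcal{C}$, respectively, using that $\mathcal{F}_A(X)$ is a free module. The left $A$-action on $X \otimes M$ (coming from $\mathcal{F}_A(X) \otimes M$) involves first braiding $A$ past $X$ and then acting on $M$ via $\rho_M$, while the action on $M \otimes X$ (coming from $M \otimes \mathcal{F}_A(X)$) is simply $\rho_M \otimes \operatorname{id}_X$.

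The candidate isomorphism is $c_{X,M} \colon X \otimes M \to M \otimes X$ (or its inverse, depending on the chosen convention for the right action on $\mathcal{F}_A(X)$). The key step is to verify this is an $A$-module map. This is precisely the content of Lemma~\ref{lem:OB}, applied with $f = \operatorname{id}_{\mathcal{F}_A(X)}$: the ``over-braiding'' principle says that braiding a free module past any module respects the module structure. Alternatively, one can check the intertwining property directly by a single hexagon axiom calculation, in which an iterated braiding of $A$ past $X$ telescopes with its inverse to yield the identity.

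Since the braiding is invertible in $\mathcal{C}$, the induced $A$-module map is also invertible, giving the desired isomorphism in $\mathcal{C}_A$. The main bookkeeping subtlety, and the only place I expect to need care, is fixing conventions (positive vs. negative braiding, left vs. right actions) consistently with the paper's convention that $\mathcal{F}_A$ is monoidal via the positive braiding. I do not anticipate any deeper obstacle; conceptually, the lemma reflects the fact that $\mathcal{F}_A$ factors through the Drinfeld center $\mathcal{Z}(\mathcal{C}_A)$, so every object in the image of $\mathcal{F}_A$ carries a half-braiding with all of $\mathcal{C}_A$.
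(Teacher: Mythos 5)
Your proposal is correct and matches the paper's approach: the paper states this lemma with no separate proof beyond the remark that it follows from Lemma~\ref{lem:OB}, and your argument fleshes out exactly that route (identifying the underlying objects as $X\otimes M$ and $M\otimes X$, taking the braiding $c_{X,M}$ as the candidate isomorphism, and inheriting invertibility from $\mathcal{C}$). One minor imprecision: applying Lemma~\ref{lem:OB} with $f=\operatorname{id}_{\mathcal{F}_A(X)}$ is vacuous, since that lemma presupposes the crossing is already a morphism in $\mathcal{C}_A$ and only adds naturality in the free-module slot; your fallback --- a direct hexagon/naturality check that $c_{X,M}$ intertwines the two $A$-actions --- is the right way to establish the module-map property, and the Drinfeld-center remark is the correct conceptual summary.
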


For this paper we will be working exclusively with $\mathcal{C} = \mathcal{C}(\mathfrak{sl}_N, k)$. As this category is semisimple, the free module functor $\mathcal{F}_A: \mathcal{C}(\mathfrak{sl}_N, k) \to \mathcal{C}(\mathfrak{sl}_N, k)_A$ is faithful. In a slight abuse of notation we will identify $\mathcal{C}(\mathfrak{sl}_N, k)$ with its image under $\mathcal{F}_A$ in $\mathcal{C}(\mathfrak{sl}_N, k)_A$. In particular free modules in $\mathcal{C}(\mathfrak{sl}_N, k)_A$ will be labeled by (pairs of) Young diagrams, and the planar algebraic subcategory generated by $(\ydiagram{1},\emptyset)$ and $(\emptyset,\ydiagram{1})$ in $\mathcal{C}(\mathfrak{sl}_N, k)_A$ will contain the subcategory $\overline{\mathcal{SH}(q, N)}$ for the appropriate $q$ value.

\subsection{\`Etale Algebras from Conformal Embeddings}\label{sub:CI}

A key source of \`etale algebras comes from the theory of conformal embeddings. These are inclusions of Wess-Zumino-Witten vertex operator algebras \cite{WZW}
\[      \mathcal{V}(\mathfrak{sl}_N, k)   \subseteq  \mathcal{V}(\mathfrak{g}, 1)   \]
such that $\mathcal{V}(\mathfrak{g}, 1) $ decomposes as a finite direct sum of irreducible $ \mathcal{V}(\mathfrak{sl}_N, k) $-representations. It follows from \cite{Xu,kril,HKL} that such a conformal embedding gives rise to an \`etale algebra in $\mathcal{C}(\mathfrak{sl}_N, k)$. 
\begin{defn}
    For a conformal inclusion of the form 
    \[      \mathcal{V}(\mathfrak{sl}_N, k)   \subseteq  \mathcal{V}(\mathfrak{g}, 1)   \]
    we will write $A_\mathfrak{g}$ for the corresponding \`etale algebra in $\mathcal{C}(\mathfrak{sl}_N, k)$ under the above correspondence.
\end{defn}

For the purpose of this paper, we will be interested in the following three families of conformal embeddings:
\begin{align*}
    \mathcal{V}(\mathfrak{sl}_N, N-2)&\subseteq \mathcal{V}(\mathfrak{sl}_{N(N-1)/2}, 1)\\
    \mathcal{V}(\mathfrak{sl}_N, N)&\subseteq \mathcal{V}(\mathfrak{so}_{N^2-1}, 1)\\
    \mathcal{V}(\mathfrak{sl}_N, N+2)&\subseteq \mathcal{V}(\mathfrak{sl}_{N(N+1)/2}, 1).
\end{align*}
The corresponding \`etale algebras for these conformal embeddings are 1-super-transitive, and hence we expect them to occur in our classification result. For the purposes of this paper we do not need to know too much information on the structure of the conformal embedding, but we will need the following presentations for their categories of $A$-modules in $\mathcal{C}(\mathfrak{sl}_N,k)$.

In the case of the conformal embedding $\mathcal{V}(\mathfrak{sl}_N, N)\subseteq \mathcal{V}(\mathfrak{so}_{N^2-1},1)$, the corresponding category of $A_{\mathfrak{so}_{N^2-1}}$-modules was the main example of focus in \cite{me}. A main result of that paper was the 
following presentation for this category of $A_{\mathfrak{so}_{N^2-1}}$-modules. Note that this is the alternate presentation described in \cite[Remark 5.3]{me}. This alternate presentation will be more amenable to the techniques we develop in this paper to classify 1-super-transitive \`etale algebra objects.

\begin{defn}\label{def:SEN}\cite{me}
Let $N \in \mathbb{N}_{\geq 2}$. We define $\mathcal{SE}_N$ as the extension of $\mathcal{SH}(q,N)$ where $q = e^{2\pi i \frac{1}{4N}}$ by the additional generator
\[       \att{splittingEnd2}{.3}\in \End_{\mathcal{SE}_N}(+^2)         \]
satisfying the algebra relations:
\begin{align*}
\att{splittingEndR1}{.3} &= -\att{splittingEndR2}{.3}\qquad 
   && \att{splittingEndR3}{.3} = -\att{splittingEndR4}{.3} + (q-q^{-1})\att{splittingEndR5}{.3}\\
     \att{splittingEndR6}{.3} &= - \att{splittingEndR5}{.3}\qquad 
    &&\att{overBraidEnd}{.3} = \att{overBraidEnd2}{.3}\\
  \att{splittingEndR7}{.3}&= \att{splittingEndR2braided}{.3}  .
\end{align*}    
along with the trace relations:
\[\att{etr1}{.3} \quad=\quad 0 \quad=\quad \att{etr2}{.3} \qquad \text{and}\qquad \att{etr3}{.3} \quad=\quad \mathbf{i} \; \att{etr0}{.3}\quad  = \quad \att{etr4}{.3}.\]
\end{defn}
We have that the Cauchy completion of the semisimplification of $\mathcal{SE}_N$ recovers the $A$-module category internal to $\mathcal{C}(\mathfrak{sl}_N, N)$.
\begin{thm}\cite[Theorem 1.5]{me}\label{thm:me}
    Let $N \in \mathbb{N}_{\geq 3}$, and set $q = e^{2\pi i \frac{1}{4N}}$. Then there is a monoidal equivalence
    \[ \operatorname{Ab}(\overline{\mathcal{SE}_N}  ) \simeq \mathcal{C}(\mathfrak{sl}_N, N)_{A_{\mathfrak{so}_{N^2-1}}}.       \]
    
\end{thm}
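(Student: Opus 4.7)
The plan is to construct a pivotal monoidal $\dag$-functor
\[F \colon \mathcal{SE}_N \to \mathcal{C}(\mathfrak{sl}_N, N)_{A_{\mathfrak{so}_{N^2-1}}}\]
and prove that the induced functor on Cauchy-completed semisimplifications is an equivalence. The Kazhdan--Wenzl equivalence $\operatorname{Ab}(\overline{\mathcal{SH}(q,N)}) \simeq \mathcal{C}(\mathfrak{sl}_N, N)$, followed by the free module functor $\mathcal{F}_{A_{\mathfrak{so}_{N^2-1}}}$, already defines $F$ on the $\mathcal{SH}(q,N)$ subcategory: $+$ is sent to $\mathcal{F}_A(V_{\ydiagram{1}})$ and the $N$-valent KW vertex to the corresponding morphism in the module category.

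To extend $F$ to the splittingEnd generator I need a non-scalar endomorphism of $\mathcal{F}_A(V_{\ydiagram{1}})^{\otimes 2}$ linearly independent from the image of the Hecke algebra. Since $A_{\mathfrak{so}_{N^2-1}}$ is known to be 1-super-transitive, the definition itself gives $\dim \End_{\mathcal{C}_A}(\mathcal{F}_A(V_{\ydiagram{1}})^{\otimes 2}) > 2$, so such an endomorphism exists and is unique up to scalar. Its normalization is then constrained by the trace relations of Definition~\ref{def:SEN}, and consistency of that normalization reduces to a computation using the adjunction $\Hom_{\mathcal{C}}(X, \operatorname{For} M) \cong \Hom_{\mathcal{C}_A}(\mathcal{F}_A(X), M)$ together with the explicit structure of $A_{\mathfrak{so}_{N^2-1}}$ coming from the conformal embedding $\mathcal{V}(\mathfrak{sl}_N, N) \subseteq \mathcal{V}(\mathfrak{so}_{N^2-1}, 1)$.

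Verifying the remaining algebraic relations of $\mathcal{SE}_N$ is then a diagrammatic exercise: each relation lives in $\End_{\mathcal{C}_A}(\mathcal{F}_A(V_{\ydiagram{1}})^{\otimes k})$ for $k \leq 3$, finite-dimensional spaces whose dimensions are readable from the fusion graph, so each identity reduces to matching finitely many scalars computable via skein moves in $\mathcal{SH}(q,N)$ together with the splitting generator's structure constants. Essential surjectivity after Cauchy completion is then automatic: every simple object of $\mathcal{C}(\mathfrak{sl}_N, N)_{A_{\mathfrak{so}_{N^2-1}}}$ is a summand of $\mathcal{F}_A(X)$ for some simple $X \in \mathcal{C}(\mathfrak{sl}_N, N)$, and every such $X$ lies in the image of $F$.

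The main obstacle is full faithfulness of the semisimplified functor $\overline{F}$. The cleanest strategy is a Frobenius--Perron dimension count: one has $\operatorname{FPdim} \mathcal{C}(\mathfrak{sl}_N, N)_{A_{\mathfrak{so}_{N^2-1}}} = \operatorname{FPdim} \mathcal{C}(\mathfrak{sl}_N, N) / \operatorname{FPdim} A_{\mathfrak{so}_{N^2-1}}$, so if $\operatorname{Ab}(\overline{\mathcal{SE}_N})$ can be shown to have matching Frobenius--Perron dimension then $\overline{F}$ (being essentially surjective and exact between fusion categories) must be an equivalence. Extracting this dimension requires a diagrammatic normal form in $\mathcal{SE}_N$: a rewriting algorithm showing that modulo the given relations every morphism reduces to a canonical representative with a controlled number of splitting generators in prescribed positions. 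Proving such a normal form, and then checking that no further relations are forced by passage to the semisimplification, is where essentially all of the technical work would lie.
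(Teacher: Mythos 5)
First, note that the paper does not actually prove Theorem~\ref{thm:me}; it is imported wholesale from \cite{me}. The closest in-paper analogue is the proof of Theorem~\ref{thm:liu}, which is explicitly modelled on the argument of \cite{me}, so that is the right benchmark. Your first half matches it: define the functor on the Kazhdan--Wenzl subcategory via the free module functor, extend to the splitting generator, and verify the finitely many relations inside small box spaces. One local error: for $k=N$ the space $\End_{\mathcal{C}_A}(\mathcal{F}_A(\ydiagram{1})^{\otimes 2})$ is $4$-dimensional (it is $M_2(\mathbb{C})$, since $\ydiagram{1}^{\otimes 2}\cong 2\cdot\ydiagram{2}$ in $\mathcal{C}_A$), so the complement of the Hecke image is $2$-dimensional and your candidate for the image of the splitting generator is \emph{not} unique up to scalar. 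It is a particular linear combination of the identity, the braid, and the two off-diagonal matrix units, pinned down only after imposing the braid-absorption and trace relations -- compare the explicit formula in the proof of Theorem~\ref{thm:classN}. This is repairable, but the uniqueness claim as you state it is false, and it is what lets you treat the normalization as the only remaining freedom.

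The substantive divergence is the endgame. You propose to upgrade the faithful dominant functor to an equivalence by a Frobenius--Perron dimension count, which forces you to compute $\operatorname{FPdim}\operatorname{Ab}(\overline{\mathcal{SE}_N})$ from the presentation, i.e.\ to exhibit a diagrammatic normal form and show that no further relations are introduced by semisimplification. That is far stronger than what is needed, and you correctly identify it as where all the work would lie -- for categories of this type it is genuinely hard. The argument of \cite{me} (mirrored in the proof of Theorem~\ref{thm:liu}) only establishes \emph{evaluability}, $\dim\End_{\mathcal{SE}_N}(\mathbf{1})=1$, via a closed-diagram evaluation algorithm; this is much weaker than a normal form but already suffices to descend to a faithful dominant functor $\overline{\mathcal{SE}_N}\to\mathcal{C}_A$. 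Fullness is then obtained structurally rather than by counting: $\operatorname{Ab}(\overline{\mathcal{SE}_N})$ receives a dominant central functor from $\mathcal{C}(\mathfrak{sl}_N,N)$, hence is of the form $\mathcal{C}(\mathfrak{sl}_N,N)_B$ for an \`etale subalgebra $B\subseteq A_{\mathfrak{so}_{N^2-1}}$, and the classification of such subalgebras (all proper ones are pointed, which leads to a contradiction for $N\geq 3$) forces $B=A_{\mathfrak{so}_{N^2-1}}$. Your route is not wrong in principle, but the normal-form step you defer is exactly the obstacle the subalgebra argument is designed to circumvent; as written, your proposal leaves the hardest and only genuinely uncertain step unexecuted.
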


The techniques developed in \cite{me} are quite general, and work to give presentations for the category of $A$-modules in $\mathcal{C}(\mathfrak{sl}_N,k)$ coming from other conformal embeddings. In particular we can apply these techniques to the families 
\[\mathcal{V}(\mathfrak{sl}_N, N-2)\subseteq \mathcal{V}(\mathfrak{sl}_{N(N-1)/2}, 1)\qquad \text{and} \qquad 
    \mathcal{V}(\mathfrak{sl}_N, N+2)\subseteq \mathcal{V}(\mathfrak{sl}_{N(N+1)/2}, 1)\]
to obtain the following definition.

\begin{defn}\label{def:SDN}
Let $N \in \mathbb{N}_{\geq 2}$. We define $\mathcal{SD}^{\pm}_N$ as the extension of $\mathcal{SH}(q,N)$ where $q = e^{2\pi i \frac{1}{2( 2N \pm 2)}}$ by the additional generator
\[    \att{splittingLiu}{.3}  \in \End_{\mathcal{SD}_N}(+^2)         \]
satisfying the relations:
\begin{align*}
     \att{LiuRel3}{.3} &=\att{LiuRel4}{.3} \qquad &&\att{LiuRel5}{.3}=\att{LiuRel6}{.3}\\
     \att{LiuRel1}{.3} &= \frac{q-q^{-1}}{i (q + q^{-1})} \att{id1}{.3} = \att{LiuRel2}{.3}&&
      \att{LiuRel7}{.3} = \pm q^{\pm 1}\att{splittingLiu}{.3}=\att{LiuRel8}{.3}.
\end{align*}
\end{defn}
We note that the above categories have appeared implicitly in work of Liu \cite{LiuYB}. We can then prove using the same techniques as in \cite{me} that these categories are presentations for the corresponding conformal embedding categories. The following theorem proves conjectures made by Liu in \cite{LiuYB}.
\begin{thm}\label{thm:liu}
    Let $N \in \mathbb{N}_{\geq 3}$, and set $q = e^{2\pi i \frac{1}{2(2N \pm 2)}}$. Then there are monoidal equivalences
    \[ \operatorname{Ab}(\overline{\mathcal{SD}^\pm_N}  ) \simeq \mathcal{C}(\mathfrak{sl}_N, N\pm 2)_{A_{\mathfrak{sl}_{N(N\pm 1)/2}}}    .   \]

\end{thm}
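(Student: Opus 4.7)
The plan is to follow the blueprint used in \cite{me} to prove Theorem~\ref{thm:me}. Fix a sign $\pm$ and set $k = N \pm 2$, $A = A_{\mathfrak{sl}_{N(N\pm 1)/2}}$. The target is a monoidal equivalence $\operatorname{Ab}(\overline{\mathcal{SD}^{\pm}_N}) \simeq \mathcal{C}(\mathfrak{sl}_N, k)_A$, which I would obtain in two steps: first construct a monoidal $\dag$-functor $F : \mathcal{SD}^{\pm}_N \to \mathcal{C}(\mathfrak{sl}_N, k)_A$, and then upgrade it to an equivalence after passing to Cauchy completions of semisimplifications.

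To define $F$, send $+ \mapsto \mathcal{F}_A(\ydiagram{1})$ and $- \mapsto \mathcal{F}_A((\emptyset,\ydiagram{1}))$, and send the Kazhdan--Wenzl generators (the braiding on $+^2$ and the $N$-fold antisymmetriser $+^N \to \mathbf{1}$) to their images under the monoidal functor $\mathcal{F}_A : \mathcal{C}(\mathfrak{sl}_N, k) \to \mathcal{C}(\mathfrak{sl}_N,k)_A$; all relations of $\mathcal{SH}(q,N)$ then hold automatically. The substantive datum is the image of the new splitting endomorphism. By the known $1$-super-transitivity of $A$, the endomorphism algebra $\operatorname{End}_{\mathcal{C}(\mathfrak{sl}_N,k)_A}(\mathcal{F}_A(\ydiagram{1})^{\otimes 2})$ has dimension $3$, while the image of $\mathcal{F}_A$ supplies only the two-dimensional subspace spanned by the projections onto $\mathcal{F}_A(\ydiagram{2})$ and $\mathcal{F}_A(\ydiagram{1,1})$. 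I would send the new generator to an appropriately normalised minimal idempotent onto the extra simple summand, which sits inside the isotypic component ($\mathcal{F}_A(\ydiagram{2})$ for $k = N+2$, or $\mathcal{F}_A(\ydiagram{1,1})$ for $k = N-2$) containing the single nontrivial simple constituent of $A$ at depth $2$.

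The main obstacle is to verify that this choice of $F$ on the new generator satisfies the defining relations of Definition~\ref{def:SDN}. These are polynomial identities in the three-dimensional algebra above, spanned by the identity, the image of the braiding, and $F$ of the new generator. I would verify them as follows. The braiding-absorption relation yielding the eigenvalue $\pm q^{\pm 1}$ follows by computing the ratio $\theta_X / \theta_{\ydiagram{1}}^{2}$, where $X$ is the extra simple summand; this ratio is available from the Killing-form twist recipe of Subsection~\ref{sub:sl} applied to the known highest weight of $X$, and the sign tracks whether $X$ lies in the symmetric or antisymmetric square of $\ydiagram{1}$. The remaining algebra and trace relations, including the trace value $(q-q^{-1})/(i(q+q^{-1}))$, are fixed by the explicit projector formulas for $\ydiagram{2}$ and $\ydiagram{1,1}$ recalled in Subsection~\ref{sub:sl} together with the quantum dimension of $X$, which is computable via the quantum Weyl character formula. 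This case-by-case skein-theoretic verification, running parallel to the $\mathcal{SE}_N$ analysis of \cite{me}, is the technical heart of the proof; Lemma~\ref{lem:OB} is what allows the identities to be manipulated diagrammatically inside the module category.

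Once $F$ is established, equivalence on Cauchy completions of semisimplifications follows exactly as in \cite{me}. The category $\mathcal{C}(\mathfrak{sl}_N,k)_A$ is tensor-generated by $\mathcal{F}_A(\ydiagram{1})$ and its dual, since every simple of $\mathcal{C}(\mathfrak{sl}_N,k)$ is a summand of a tensor power of $\ydiagram{1}$ and $\mathcal{F}_A$ is surjective on isomorphism classes of summands, so the extension of $F$ is essentially surjective. Full faithfulness is then forced by dimension counting: the relations of $\mathcal{SD}^{\pm}_N$ give an upper bound on the Frobenius--Perron dimension of $\operatorname{Ab}(\overline{\mathcal{SD}^{\pm}_N})$, and this bound is already attained by the known target dimension $\operatorname{FPdim}(\mathcal{C}(\mathfrak{sl}_N,k))/\operatorname{FPdim}(A)$ coming from the conformal embedding, collapsing $F$ to an equivalence.
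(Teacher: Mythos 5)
Your overall architecture --- build a monoidal functor out of $\mathcal{SD}^{\pm}_N$ hitting the braid, the antisymmetriser, and a minimal idempotent onto the extra depth-two summand, then upgrade to an equivalence after semisimplifying and Cauchy-completing --- matches the paper's. But the load-bearing steps are missing or replaced by assertions that do not go through as stated. The paper does \emph{not} verify the defining relations by skein computations in $\operatorname{End}(\mathcal{F}_A(\ydiagram{1})^{\otimes 2})$; it first identifies the extra simple summand as the distinguished \emph{invertible} local module $g$ coming from $\mathcal{C}(\mathfrak{sl}_N,k)_{A}^0\simeq \mathcal{C}(\mathfrak{sl}_{N(N\pm1)/2},1)$, which is pointed. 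Invertibility is exactly what forces the first three relations (in particular $\dim(g)=1$ gives the trace value $\tfrac{q-q^{-1}}{i(q+q^{-1})}=1/\dim(\ydiagram{1})$), and the braid-absorption relation follows because $\Hom(g\to\ydiagram{2})=1$ makes the projection onto $g$ a subprojection of $p_{\ydiagram{2}}$. Your substitute --- twist ratios plus ``the quantum dimension of $X$, computable via the quantum Weyl character formula'' --- is not available: $X$ is a simple object of the module category, not of $\mathcal{C}(\mathfrak{sl}_N,k)$, so the Weyl formula does not apply to it, and without first establishing $\dim(X)=1$ and $X\otimes X\cong\mathbf{1}$ you cannot pin down the trace relation or the quadratic relations; moreover several of the defining relations live in the 3-box space and cannot be checked from 2-box data alone (compare the amount of work Section~\ref{sec:kN} needs for the analogous 3-box relations when no invertibility is present).

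Two further gaps. You skip evaluability of $\mathcal{SD}^{\pm}_N$ (that $\dim\End_{\mathcal{SD}^{\pm}_N}(\mathbf{1})=1$), which the paper establishes with a new local simplification move and which is needed both for the semisimplification to make sense and for the descended functor to be faithful. And your closing step --- ``the relations give an upper bound on $\operatorname{FPdim}(\operatorname{Ab}(\overline{\mathcal{SD}^{\pm}_N}))$ which is attained'' --- is asserted without argument; producing such a bound is essentially the evaluability/spanning analysis you omitted. The paper instead shows the faithful dominant functor realises $\operatorname{Ab}(\overline{\mathcal{SD}^{\pm}_N})$ as $\mathcal{C}(\mathfrak{sl}_N,k)_B$ for an \`etale subalgebra $B\subseteq A_{\mathfrak{sl}_{N(N\pm1)/2}}$, and then uses the classification of such subalgebras (all proper ones are pointed, which is ruled out for $N\geq 3$) to conclude $B=A$. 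You would need to supply an argument of this kind, or a genuine proof of your dimension bound, to close the proof.
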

\begin{proof}
    This proof closely mimics the corresponding proof for the categories $\mathcal{SE}_N$ found in \cite{me}. As the details in the current case follow the same outline, we will only detail the parts that are significantly different. We will work the case of $A_{\mathfrak{sl}_{N(N+1)/2}}$, as the $A_{\mathfrak{sl}_{N(N-1)/2}}$ case is nearly identical.

    The first step is to build a monoidal functor $\mathcal{SD}^+_N\to \mathcal{C}(\mathfrak{sl}_N, N+ 2)_{A_{\mathfrak{sl}_{N(N+1)/2}}} $. From \cite[Theorem 5.2]{kril}, we know that $\mathcal{C}(\mathfrak{sl}_N, N+ 2)_{A_{\mathfrak{sl}_{N(N+1)/2}}} $ contains a distinguished invertible local module $g$, and further from \cite{LevLaughLib} we have that $ \ydiagram{2}$ is a summand of $\operatorname{For}(g)$. It follows from the adjunction between the free module functor and the forgetful functor that $\Hom_{ \mathcal{C}(\mathfrak{sl}_N, N+ 2)_{A_{\mathfrak{sl}_{N(N+1)/2}}}}( g \to \ydiagram{1}^{\otimes 2}) =1$. As $\mathcal{C}(\mathfrak{sl}_N, N+ 2)_{A_{\mathfrak{sl}_{N(N+1)/2}}}$ is semisimple there exists a unique minimal central projection onto $g$ in $\End_{ \mathcal{C}(\mathfrak{sl}_N, N+ 2)_{A_{\mathfrak{sl}_{N(N+1)/2}}}}(\ydiagram{1}^{\otimes 2})$. As $g$ is invertible it follows that this projection satisfies the first three relations of $\mathcal{SD}^+_N$. Another adjunction argument shows that $\Hom_{ \mathcal{C}(\mathfrak{sl}_N, N+ 2)_{A_{\mathfrak{sl}_{N(N+1)/2}}}}( g \to \ydiagram{2}) = 1$, and so $g$ is a summand of the free module $\ydiagram{2}$. It follows that the projection onto $g$ is a subprojection of the projection onto $\ydiagram{2}$, and in particular the projection onto $g$ absorbs the braid at a cost of $q$. This gives that the projection onto $g$ satisfies the final defining relation of $\mathcal{SD}^+_N$. Hence there exists a dominant monoidal functor $\mathcal{SD}^+_N\to \mathcal{C}(\mathfrak{sl}_N, N+ 2)_{A_{\mathfrak{sl}_{N(N+1)/2}}} $.

    To show this functor descends to a faithful functor out of the semisimplification, we must show that $\mathcal{SD}^+_N$ is evaluable, i.e. $\dim\End_{\mathcal{SD}^+_N}(\mathbf{1}) = 1$. The evaluation algorithm closely follows the algorithm used for $\mathcal{SE}_N$ almost works verbatim. The only significant difference is that one now uses the local simplification of diagrams (and the horizontal mirror) to progress the induction:
    \[  \att{ev1}{.3}  = \att{ev2}{.3} =\att{ev3}{.3} = q^{-1} \att{ev4}{.3} = - \frac{q-q^{-1}}{q+q^{-1}}\att{ev6}{.3}. \]
    With the evaluation algorithm in hand, we obtain a dominant faithful monoidal functor $\overline{\mathcal{SD}^+_N}\to \mathcal{C}(\mathfrak{sl}_N, N+ 2)_{A_{\mathfrak{sl}_{N(N+1)/2}}} $.

    To show this functor is an equivalence we apply analogous techniques as in \cite[Theorem 4.17]{me} to show that $\operatorname{Ab}( \overline{\mathcal{SD}^+_N}  ) \simeq \mathcal{C}(\mathfrak{sl}_N, N+ 2)_B$ where $B$ is an \`etale subalgebra object of $A_{\mathfrak{sl}_{N(N+1)/2}}$. These subalgebras are classified by \cite[Theorem 1.1]{papi}, where it is shown the only proper subalgebras are pointed. As in the proof of \cite[Theorem 4.17]{me} a contradiction arises in the case that $B$ is pointed for $N\geq 3$. Hence we have shown that $\operatorname{Ab}( \overline{\mathcal{SD}^+_N}  ) \simeq \mathcal{C}(\mathfrak{sl}_N, N+ 2)_{A_{\mathfrak{sl}_{N(N+1)/2}}}$.

\end{proof}

\section{Initial Algebra Structure}\label{sec:ini}
The goal of this section is to study the general structure of 1-super-transitive \`etale algebra objects in $\mathcal{C}(\mathfrak{sl}_N, k)$, and the consequences of this structure on the corresponding category of modules. We will obtain restrictions on the parameter $k$ where a 1-super-transitive \`etale algebra object can exist, along with a finite list of possible fusion graphs (up to depth 3) for the corresponding categories of modules. This structure will be the starting point for the results of the remaining sections which fully classify 1-super-transitive \`etale algebra objects in these categories.

A key result we use throughout this section is that if $X$ is a simple summand of an \`etale algebra object in a pseudo-unitary braided tensor category (for our purposes $\mathcal{C}(\mathfrak{sl}_N, k)$), then the twist of $X$ is trivial \cite{LevelAndy}. Along with the explicit formulas for the twists of the simple objects of $\mathcal{C}(\mathfrak{sl}_N, k)$ given in Subsection~\ref{sub:sl} we obtain strong restrictions on which simple objects can be summands of any \`etale algebra. The following result is an easy consequence of these ideas. Many of the other proofs in this section are just more involved applications of the idea in the short proof below.

\begin{rmk}
    For typesetting purposes for the remainder of the paper we will write $\mathcal{C}$ in place for $\mathcal{C}(\mathfrak{sl}_N, k)$.
\end{rmk}

\begin{lem}\label{lem:simp}
    For any $N,k\in \mathbb{N}$ and $A$ an \`etale algebra in $\mathcal{C}$. Then $\ydiagram{1}$ remains simple in $\mathcal{C}_A$.
\end{lem}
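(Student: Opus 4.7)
The plan is to compute $\dim \End_{\mathcal{C}_A}(\mathcal{F}_A(\ydiagram{1}))$ directly and show it equals $1$. By the free/forgetful adjunction recalled in Subsection~\ref{sub:CI}, together with rigidity of $\mathcal{C}$ and the self-duality $A \cong A^*$ of an \'etale algebra, I would rewrite
\[ \End_{\mathcal{C}_A}(\mathcal{F}_A(\ydiagram{1})) \;\cong\; \Hom_{\mathcal{C}}(\ydiagram{1}, A\otimes \ydiagram{1}) \;\cong\; \Hom_{\mathcal{C}}(\ydiagram{1}\otimes \ydiagram{1}^{\,*}, A). \]
This reduces the statement to deciding which simple summands of $\ydiagram{1}\otimes \ydiagram{1}^{\,*}$ can actually appear inside $A$.

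For $k \geq 2$ the tensor product $\ydiagram{1}\otimes \ydiagram{1}^{\,*}$ decomposes in the classical $\mathfrak{sl}_N$ fashion as $\mathbf{1} \oplus V_{\mathrm{adj}}$, where $V_{\mathrm{adj}}$ is the adjoint representation (namely $V_{(\ydiagram{1},\ydiagram{1})}$ for $N \geq 3$, and $V_{[2]}$ for $N = 2$). Connectedness of $A$ already contributes $\dim\Hom(\mathbf{1},A) = 1$, so it remains to kill the adjoint contribution. This is exactly where I would invoke the twist obstruction highlighted at the start of the section: every simple summand of an \'etale algebra in a pseudo-unitary braided tensor category must have trivial twist. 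The example computation in Subsection~\ref{sub:sl} gives $\theta_{V_{(\ydiagram{1},\ydiagram{1})}} = q^{2N} = e^{2\pi i N/(N+k)}$, and the parallel calculation for $N=2$ yields $\theta_{V_{[2]}} = q^4$; in both cases $0 < N < N+k$ for $k \geq 1$ forces the twist to be non-trivial. Hence $V_{\mathrm{adj}}$ cannot embed into $A$, so $\Hom(V_{\mathrm{adj}},A) = 0$ and the total dimension is $1$.

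For the edge cases $k \in \{0,1\}$ the category $\mathcal{C}$ is either trivial or pointed, so $\ydiagram{1}$ is invertible and $\ydiagram{1}\otimes\ydiagram{1}^{\,*} \cong \mathbf{1}$; the claim is then immediate from connectedness of $A$. There is no real obstacle here: the whole argument is a one-line "adjunction plus twist" computation. The main value of the lemma is pedagogical, since this template — use adjunction to pull a Hom space down into $\mathcal{C}$, identify the low-depth summands via classical $\mathfrak{sl}_N$ fusion, then eliminate candidate summands of $A$ by their twists — is exactly the strategy that the rest of Section~\ref{sec:ini} will push to depths $2$ and $3$.
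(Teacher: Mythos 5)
Your proof is correct and follows essentially the same route as the paper's: the free/forgetful adjunction reduces the claim to $\dim\Hom_{\mathcal{C}}(A \to \mathbf{1}\oplus(\ydiagram{1},\ydiagram{1}))$, connectedness accounts for the unit summand, and the non-trivial twist $q^{2N}$ of the adjoint summand excludes it from $A$. The only cosmetic difference is your separate treatment of $N=2$ and $k\leq 1$, which the paper folds into the single observation that $q^{2N}\neq 1$ for all $N\geq 2$ and $k\geq 1$.
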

\begin{proof}
   Using the free module functor adjunction and rigidity we compute
   \[  \dim\End_{\mathcal{C}_A}(\ydiagram{1}) =  \dim\Hom_{\mathcal{C}}(A \to \ydiagram{1}\otimes \ydiagram{1}^*) =  \dim\Hom_{\mathcal{C}}(A \to \mathbf{1} \oplus ( \ydiagram{1}, \ydiagram{1}  )). \]
   The twist of $( \ydiagram{1}, \ydiagram{1}  )$ is $q^{2N}$ where $q = e^{2\pi i \frac{1}{2(N+k)}}$. This twist is never trivial for any $N\geq 2$ and $k\geq 1$. Hence $( \ydiagram{1}, \ydiagram{1}  )\not \in A$ and so 
   \[\dim\End_{\mathcal{C}_A}(\ydiagram{1})=1.\]
\end{proof}

An immediate consequence of $\ydiagram{1}$ being simple is the following fusion graph obstruction. 
\begin{lem}\label{lem:basOb}
    For any $N,k\in \mathbb{N}$ and $A$ an \`etale algebra in $\mathcal{C}$, and let $X,Y$ be simple summands of $\ydiagram{1}^{\otimes 2}$ in $\mathcal{C}_A$. Then
    \[  \dim\Hom_{\mathcal{C}_A}( X\otimes \ydiagram{1} \to  Y\otimes \ydiagram{1}   ) > 0. \]
\end{lem}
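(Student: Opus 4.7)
The plan is to apply rigidity of $\ydiagram{1}$ in $\mathcal{C}_A$ to reformulate the desired inequality, and then to exhibit $X$ as a direct summand of $Y\otimes \ydiagram{1}\otimes \ydiagram{1}^*$ by chaining together the summand inclusion $X\subseteq \ydiagram{1}^{\otimes 2}$ with a rigidity-dual reformulation of $Y\subseteq \ydiagram{1}^{\otimes 2}$.

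Concretely, rigidity of $\ydiagram{1}$ immediately yields
\[\dim\Hom_{\mathcal{C}_A}(X\otimes \ydiagram{1}, Y\otimes \ydiagram{1})=\dim\Hom_{\mathcal{C}_A}(X, Y\otimes \ydiagram{1}\otimes \ydiagram{1}^*),\]
so it suffices to show $X$ is a summand of $Y\otimes \ydiagram{1}\otimes \ydiagram{1}^*$. Since $Y$ is a summand of $\ydiagram{1}^{\otimes 2}$, a second application of rigidity in reverse (together with Lemma~\ref{lem:simp}, which guarantees that $\ydiagram{1}$ is simple so that the resulting non-zero morphism splits as an inclusion of a summand) shows that $\ydiagram{1}$ is a summand of $Y\otimes \ydiagram{1}^*$. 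Tensoring this summand inclusion by $\ydiagram{1}$ on the right promotes it to $\ydiagram{1}^{\otimes 2}\subseteq Y\otimes \ydiagram{1}^*\otimes \ydiagram{1}$, and combining with the hypothesis $X\subseteq \ydiagram{1}^{\otimes 2}$ gives $X\subseteq Y\otimes \ydiagram{1}^*\otimes \ydiagram{1}$ by transitivity.

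The one step deserving care is the swap $Y\otimes \ydiagram{1}^*\otimes \ydiagram{1}\cong Y\otimes \ydiagram{1}\otimes \ydiagram{1}^*$, since $\mathcal{C}_A$ is not braided and so we cannot naively pass $\ydiagram{1}$ or $\ydiagram{1}^*$ through $Y$ via a braiding. This is precisely the obstruction that Lemma~\ref{lem:com} is designed to sidestep: because $\ydiagram{1}$ and $\ydiagram{1}^*$ are free modules, the Lemma supplies the isomorphisms $\ydiagram{1}\otimes Y\cong Y\otimes \ydiagram{1}$ and $\ydiagram{1}^*\otimes Y\cong Y\otimes \ydiagram{1}^*$, from which the desired swap follows (alternatively, by noting $\ydiagram{1}\otimes \ydiagram{1}^*\cong \ydiagram{1}^*\otimes \ydiagram{1}$ as free modules since both are $\mathcal{F}_A$ applied to isomorphic objects of $\mathcal{C}$). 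This establishes $X$ as a summand of $Y\otimes \ydiagram{1}\otimes \ydiagram{1}^*$, whence the Hom space is at least one-dimensional.
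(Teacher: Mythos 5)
Your proof is correct, but it takes a different route from the paper's. The paper argues by contradiction with a diagrammatic partial-trace computation: letting $p_X, p_Y$ be the minimal projections onto $X$ and $Y$ in $\End_{\mathcal{C}_A}(\ydiagram{1}^{\otimes 2})$, it uses Lemma~\ref{lem:com} to identify the hom space in question with $\Hom_{\mathcal{C}_A}(X\otimes\ydiagram{1}\to\ydiagram{1}\otimes Y)$, observes that vanishing of this space forces the composite of $p_X$ (on the first two strands) with $p_Y$ (on the last two) to be zero, and then takes a right partial trace --- which, by simplicity of $\ydiagram{1}$ from Lemma~\ref{lem:simp}, equals $\frac{\dim(Y)}{\dim(\ydiagram{1})}\,p_X$ --- to conclude $\dim(Y)=0$, contradicting semisimplicity. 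You instead phrase everything through Frobenius reciprocity and transitivity of direct summands: $\ydiagram{1}$ is a summand of $Y\otimes\ydiagram{1}^*$ (using Lemma~\ref{lem:simp} so the nonzero map onto the simple $\ydiagram{1}$ splits), hence $X\subseteq\ydiagram{1}^{\otimes 2}\subseteq Y\otimes\ydiagram{1}^*\otimes\ydiagram{1}\cong Y\otimes\ydiagram{1}\otimes\ydiagram{1}^*$, where the last isomorphism is correctly justified since $\ydiagram{1}^*\otimes\ydiagram{1}$ and $\ydiagram{1}\otimes\ydiagram{1}^*$ are free modules on isomorphic objects of the braided category $\mathcal{C}$ (or via Lemma~\ref{lem:com}). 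The two arguments lean on the same two inputs --- simplicity of $\ydiagram{1}$ and the one-sided commutativity of free modules --- but yours avoids any trace or dimension computation and is the more categorical formulation, while the paper's version is the skein-theoretic incarnation of the same idea and fits the diagrammatic toolkit used in the rest of Sections~\ref{sec:kN} and~\ref{sec:kN2}.
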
 
\begin{proof}

Let
\[  \att{xProj}{.5},\quad  \att{yProj}{.5}\in \operatorname{End}_{\mathcal{C}_A}(\ydiagram{1}^{\otimes 2})  \]
be minimal projections onto $X$ and $Y$. As $\ydiagram{1}$ is simple from Lemma~\ref{lem:simp}, we have that
\[   \att{yProjPT}{.5} = \frac{\dim(Y)}{\dim(\ydiagram{1})} \att{stick}{.5}.   \]

By Lemma~\ref{lem:com} we have that 
\[\Hom_{\mathcal{C}_A}( X\otimes \ydiagram{1} \to  Y\otimes \ydiagram{1}   )\cong \Hom_{\mathcal{C}_A}( X\otimes \ydiagram{1} \to  \ydiagram{1} \otimes Y  ).\]
Suppose this hom space is zero-dimensional. It follows we have the relation
\[  \att{zeroRel}{.5} = 0.  \]
Taking the right partial trace gives
\[    \frac{\dim(Y)}{\dim(\ydiagram{1})} \att{xProj}{.5} = 0  \]
which implies that $\dim(Y) = 0$. This contradicts semisimplicity.
\end{proof}

We now restrict out attention to 1-super-transitive \`etale algebra objects, where we can say more.

\begin{prop}\label{prop:levels}
    Let $N,k \geq 5$, and suppose that $A\in \mathcal{C}$ is 1-super-transitive. Then either
    \begin{enumerate}[(i)]
        \item $k = N-2$ and $\left(\ydiagram{1,1},\ydiagram{1,1} \right)\in A$, or
        \item $k = N$ and both $\left(\ydiagram{2},\ydiagram{1,1} \right)\in A$ and $\left(\ydiagram{1,1},\ydiagram{2} \right)\in A$, or
         \item $k = N+2$, and $(\ydiagram{2},\ydiagram{2} )\in A$.
    \end{enumerate}
\end{prop}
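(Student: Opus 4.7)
The plan is to convert the 1-super-transitivity hypothesis into a concrete condition on which simples appear in $A$, then rule out all but the three listed options using the trivial-twist constraint.

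The first step is to decompose $\ydiagram{1}^{\otimes 2} \otimes \ydiagram{1}^{\otimes -2}$ into simples. Since $N,k\geq 5$, the fusion is classical: $\ydiagram{1}^{\otimes 2} = \ydiagram{2} \oplus \ydiagram{1,1}$ and each of the four pairings $V_\alpha \otimes V_\beta^*$ (for $\alpha,\beta \in \{\ydiagram{2}, \ydiagram{1,1}\}$) decomposes by classical $GL(N)$-branching into $\mathbf{1}$ (when $\alpha = \beta$), copies of $(\ydiagram{1},\ydiagram{1})$, and the leading piece $(\alpha,\beta)$. Collecting gives
\[   \ydiagram{1}^{\otimes 2} \otimes \ydiagram{1}^{\otimes -2} \cong 2\cdot\mathbf{1} \oplus 4\cdot(\ydiagram{1},\ydiagram{1}) \oplus (\ydiagram{2},\ydiagram{2}) \oplus (\ydiagram{2},\ydiagram{1,1}) \oplus (\ydiagram{1,1},\ydiagram{2}) \oplus (\ydiagram{1,1},\ydiagram{1,1}).  \]
By the free module functor adjunction, $\dim \operatorname{End}_{\mathcal{C}_A}(\mathcal{F}_A(\ydiagram{1}^{\otimes 2})) = \dim \operatorname{Hom}_\mathcal{C}(A \to \ydiagram{1}^{\otimes 2} \otimes \ydiagram{1}^{\otimes -2})$, and applying this to $A = \mathbf{1}$ gives $2$. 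So 1-super-transitivity forces the contribution from some non-trivial summand of $A$ to the decomposition above to be nonzero.

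The second step imposes the trivial-twist constraint on this non-trivial summand. The candidate $(\ydiagram{1},\ydiagram{1})$ was already ruled out in Lemma~\ref{lem:simp} (its twist $q^{2N}$ is never trivial for $k\geq 1$). For the remaining four candidates, a direct computation analogous to the example in Subsection~\ref{sub:sl} yields twists
\[    \theta_{(\ydiagram{2},\ydiagram{2})} = q^{4N+4}, \quad \theta_{(\ydiagram{2},\ydiagram{1,1})} = \theta_{(\ydiagram{1,1},\ydiagram{2})} = q^{4N}, \quad \theta_{(\ydiagram{1,1},\ydiagram{1,1})} = q^{4N-4}, \]
with $q = e^{\pi i/(N+k)}$. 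Triviality translates into the divisibility conditions $(N+k) \mid 2(N+1)$, $(N+k) \mid 2N$, and $(N+k) \mid 2(N-1)$ respectively. The key elementary observation is that any proper divisor of a positive integer $M$ is at most $M/2$; since the hypothesis $N,k\geq 5$ gives $N+k \geq N+5 > N+1$, so $N+k$ exceeds half of each of $2(N+1), 2N, 2(N-1)$, the only solutions are $N+k = 2(N+1)$, $N+k = 2N$, and $N+k = 2(N-1)$, i.e. $k = N+2$, $k = N$, and $k = N-2$ respectively. These three values are distinct, so exactly one of the three divisibility conditions can hold.

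The final step packages these alternatives into (i)--(iii) using the self-duality of $A$. The simples $(\ydiagram{2},\ydiagram{2})$ and $(\ydiagram{1,1},\ydiagram{1,1})$ are self-dual, giving cases (iii) and (i). The simples $(\ydiagram{2},\ydiagram{1,1})$ and $(\ydiagram{1,1},\ydiagram{2})$ are mutual duals, so if one lies in $A$ so does the other, giving case (ii). The main obstacle I expect is the twist computation together with the accompanying number-theoretic verification; the clean $N+k > N+1$ bound provided by the hypothesis is exactly what prevents sporadic divisibility coincidences that would otherwise force additional casework.
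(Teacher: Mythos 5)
Your proposal is correct and follows essentially the same route as the paper's proof: the same decomposition of $\ydiagram{1}^{\otimes 2}\otimes\ydiagram{1}^{\otimes -2}$, the same adjunction argument showing a non-trivial summand of this object must lie in $A$, and the same trivial-twist computation forcing $k\in\{N-2,N,N+2\}$ with self-duality handling case (ii). The only difference is cosmetic: you make the divisibility argument (any divisor of $M$ exceeding $M/2$ equals $M$) explicit where the paper simply asserts the unique solution.
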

\begin{proof}
    Suppose $A$ is 1-super-transitive, then we have by definition of super-transitivity that
    \[ 2 <   \dim\End_{\mathcal{C}_A}(\ydiagram{1}^{\otimes 2}) = \dim\Hom_{\mathcal{C}}(A \to \ydiagram{1}^{\otimes 2}\otimes\ydiagram{1}^{\otimes -2}).  \]
    The simple decomposition (along with the twists of the summands)
    of $\ydiagram{1}^{\otimes 2}\otimes \ydiagram{1}^{\otimes -2}$ in $\mathcal{C}$ is 
  \[  \begin{tabular}{c|c c}
    Object & Multiplicity & Twist\\ \hline
      $  \left(\emptyset, \emptyset\right)$  & 2 & 1 \\
       $  \left(\ydiagram{1}, \ydiagram{1} \right)$  & 4 & $q^{2N}$ \\   
        $  \left(\ydiagram{2}, \ydiagram{2}\right)$ & 1 &$q^{4N+4}$\\
       $  \left(\ydiagram{2}, \ydiagram{1,1}\right)$ & 1 &$q^{4N}$\\
        $  \left(\ydiagram{1,1}, \ydiagram{2}\right)$ & 1 &$q^{4N}$\\
        $  \left(\ydiagram{1,1}, \ydiagram{1,1}\right)$ & 1 &$q^{4N-4}$,
      \end{tabular}\]
      where $q = e^{2\pi i \frac{1}{2(N+k)}}$.

      As $2<\dim\Hom_{\mathcal{C}}(A \to \ydiagram{1}^{\otimes 2}\otimes\ydiagram{1}^{\otimes -2})$ we must have that at least one of the non-trivial simple summands of $\ydiagram{1}^{\otimes 2}\otimes \ydiagram{1}^{\otimes -2}$ is also a summand of $A$. We saw in the proof of Lemma~\ref{lem:simp} that $q^{2N} \neq 1$ for any parameter choice, hence $\left(\ydiagram{1}, \ydiagram{1} \right)\not\in A$.

      If $  \left(\ydiagram{2}, \ydiagram{2}\right)\in A$ then $q^{4N+4} = 1$. This only has a solution with $N,k \geq 2$ if $k = N+2$. 

      If $  \left(\ydiagram{1,1}, \ydiagram{1,1}\right)\in A$ then $q^{4N-4} = 1$. This only has a solution with $N,k \geq 2$ if $k = N-2$.

      If $  \left(\ydiagram{2}, \ydiagram{1,1}\right)\in A$ then $q^{4N} = 1$. This only has a solution with $N,k \geq 2$ if $k = N$. Furthermore in this case we must also have that $  \left(\ydiagram{1,1}, \ydiagram{2}\right)\in A$ as $A$ is self-dual.
\end{proof}

Our next goal is to bound the multiplicity of the small summands of a 1-super-transitive \`etale algebra $A$. In the case of $k = N$ this falls quickly.
\begin{prop}\label{lem:mult3}
    Let $N,k\geq 4$ and suppose that $A$ is 1-super-transitive and that $k = N$. Then 
    \[  \dim\Hom_{\mathcal{C}}\left( \left(\ydiagram{2},\ydiagram{1,1} \right)\to A\right) = \dim\Hom_\mathcal{C}\left( \left(\ydiagram{1,1},\ydiagram{2} \right)\to A\right)=1.         \]
\end{prop}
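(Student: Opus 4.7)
The plan is to compute $\dim\End_{\mathcal{C}_A}(\mathcal{F}_A(\ydiagram{1}^{\otimes 2}))$ in two different ways and compare. Since every \`etale algebra is self-dual and $(\ydiagram{2},\ydiagram{1,1})^{*} = (\ydiagram{1,1},\ydiagram{2})$, the multiplicities $\dim\Hom_\mathcal{C}((\ydiagram{2},\ydiagram{1,1}) \to A)$ and $\dim\Hom_\mathcal{C}((\ydiagram{1,1},\ydiagram{2}) \to A)$ agree; call them both $m$. Proposition~\ref{prop:levels} gives $m \geq 1$, so the goal is to prove the upper bound $m \leq 1$.

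For the first computation, the free module/forgetful adjunction gives
\[\dim\End_{\mathcal{C}_A}(\mathcal{F}_A(\ydiagram{1}^{\otimes 2})) = \dim\Hom_\mathcal{C}(A \to \ydiagram{1}^{\otimes 2} \otimes \ydiagram{1}^{\otimes -2}).\]
Specialising $k=N$ (so $q = e^{2 \pi i/(4N)}$) in the table in the proof of Proposition~\ref{prop:levels}, the twists $q^{2N}$ and $q^{4N \pm 4}$ are all non-trivial while $q^{4N}=1$. Hence the only summands of $\ydiagram{1}^{\otimes 2}\otimes \ydiagram{1}^{\otimes -2}$ that can appear in $A$ are $(\emptyset,\emptyset)$, $(\ydiagram{2},\ydiagram{1,1})$, and $(\ydiagram{1,1},\ydiagram{2})$, and reading the multiplicities off the table gives $\dim\End_{\mathcal{C}_A}(\mathcal{F}_A(\ydiagram{1}^{\otimes 2})) = 2 + 2m$.

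For the second computation, start from the monoidal identity $\mathcal{F}_A(\ydiagram{1}^{\otimes 2}) = \mathcal{F}_A(\ydiagram{2}) \oplus \mathcal{F}_A(\ydiagram{1,1})$ in $\mathcal{C}_A$, and show that both summands are simple. Using the classical $GL(N)$ decompositions
\begin{align*}
\ydiagram{2}\otimes\ydiagram{2}^{*} &\cong (\emptyset,\emptyset)\oplus(\ydiagram{1},\ydiagram{1})\oplus(\ydiagram{2},\ydiagram{2}),\\
\ydiagram{1,1}\otimes\ydiagram{1,1}^{*} &\cong (\emptyset,\emptyset)\oplus(\ydiagram{1},\ydiagram{1})\oplus(\ydiagram{1,1},\ydiagram{1,1}),
\end{align*}
which are valid in $\mathcal{C}(\mathfrak{sl}_N,N)$ under the hypothesis $N \geq 4$, the same twist analysis shows that at $k=N$ none of $(\ydiagram{1},\ydiagram{1})$, $(\ydiagram{2},\ydiagram{2})$, or $(\ydiagram{1,1},\ydiagram{1,1})$ may lie in $A$. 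Thus
\[\dim\End_{\mathcal{C}_A}(\mathcal{F}_A(\ydiagram{2})) = \dim\End_{\mathcal{C}_A}(\mathcal{F}_A(\ydiagram{1,1})) = 1,\]
so both free modules are simple in $\mathcal{C}_A$.

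A sum of two simple objects has endomorphism algebra of dimension $2$ (if the simples are non-isomorphic) or $4$ (if they are isomorphic). Equating with $2+2m$ forces $m \in \{0,1\}$, and combined with $m \geq 1$ this yields $m = 1$. The only non-routine step is establishing the two tensor decompositions above; these are entirely classical, and the hypothesis $N,k \geq 4$ (together with $k = N$) keeps us within the stable range and well below the level truncation.
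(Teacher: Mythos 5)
Your proof is correct and follows essentially the same route as the paper's: both rest on the free module adjunction together with the twist analysis to show that $\ydiagram{2}$ and $\ydiagram{1,1}$ become simple in $\mathcal{C}_A$, and then pin down $m$ using $m\geq 1$ from Proposition~\ref{prop:levels}. The only (cosmetic) difference is that you count $\dim\End_{\mathcal{C}_A}(\ydiagram{1}^{\otimes 2})=2+2m$ and invoke the dichotomy for endomorphism algebras of a sum of two simples, whereas the paper computes $\dim\Hom_{\mathcal{C}_A}(\ydiagram{2}\to\ydiagram{1,1})=m$ directly from the fusion rule for $\ydiagram{2}\otimes\ydiagram{1,1}^{*}$ and applies Schur's lemma.
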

\begin{proof}
    As $A$ is self-dual, we have that $\dim\Hom_{\mathcal{C}}\left( \left(\ydiagram{2},\ydiagram{1,1} \right)\to A\right) = \dim\Hom_{\mathcal{C}}\left( \left(\ydiagram{1,1},\ydiagram{2} \right)\to A\right)$ which we denote by $m$. We recall the following fusion rules
    \begin{align*}
        \ydiagram{2}\otimes \ydiagram{2}^* &\cong (\emptyset, \emptyset) \oplus (\ydiagram{1}, \ydiagram{1})\oplus (\ydiagram{2}, \ydiagram{2})\\
        \ydiagram{1,1}\otimes \ydiagram{1,1}^* &\cong (\emptyset, \emptyset) \oplus (\ydiagram{1}, \ydiagram{1})\oplus \left(\ydiagram{1,1}, \ydiagram{1,1}\right)\\
        \ydiagram{2}\otimes \ydiagram{1,1}^* &\cong  (\ydiagram{1}, \ydiagram{1})\oplus \left(\ydiagram{2}, \ydiagram{1,1}\right).
    \end{align*}
    With these rules and our knowledge of the twists of the simple summands, we use the free module functor adjunction to obtain
    \begin{align*}
        \dim\End_{\mathcal{C}_A}(\ydiagram{2})&= 1\\
        \dim\End_{\mathcal{C}_A}\left(\ydiagram{1,1}\right)&= 1\\
        \dim\Hom_{\mathcal{C}_A}\left(\ydiagram{2}\to \ydiagram{1,1}\right)&= m.
    \end{align*}
    Hence $\ydiagram{2}$ and $\ydiagram{1,1}$ are isomorphic simple objects in $\mathcal{C}_A$, so Schurs lemma gives that 
    \[  m =  \dim\Hom_{\mathcal{C}_A}\left(\ydiagram{2}\to \ydiagram{1,1}\right) = 1. \]
\end{proof}

A variation on this idea works in the $k = N \pm 2$ case, except now we have to look at depth 3 objects. This will require the following twist analysis.
\begin{lem}\label{lem:twistanalysis}
    Let $N,k \geq 7$. We have the following decomposition of simples in $\mathcal{C}$:
    \begin{align*}
       \ydiagram{3}\otimes \ydiagram{3}^*&\cong \left(\emptyset, \emptyset\right) \oplus \left(\ydiagram{1}, \ydiagram{1}\right)\oplus \left(\ydiagram{2}, \ydiagram{2}\right)\oplus\left(\ydiagram{3}, \ydiagram{3}\right)\\
        \ydiagram{3}\otimes \ydiagram{2,1}^*&\cong  \left(\ydiagram{1}, \ydiagram{1}\right)\oplus \left(\ydiagram{2}, \ydiagram{2}\right)\oplus\left(\ydiagram{1,1}, \ydiagram{2}\right)\oplus\left(\ydiagram{3}, \ydiagram{2,1}\right)\\
        \ydiagram{3}\otimes \ydiagram{1,1,1}^*&\cong \left(\ydiagram{2},\ydiagram{1,1}\right) \oplus \left(\ydiagram{3},\ydiagram{1,1,1}\right)\\
        \ydiagram{2,1}\otimes \ydiagram{3}^*&\cong  \left(\ydiagram{1}, \ydiagram{1}\right)\oplus \left(\ydiagram{2}, \ydiagram{2}\right)\oplus\left(\ydiagram{2}, \ydiagram{1,1}\right)\oplus\left(\ydiagram{2,1}, \ydiagram{3}\right)\\
        \ydiagram{2,1}\otimes \ydiagram{2,1}^*&\cong \left(\emptyset, \emptyset\right) \oplus 2\left(\ydiagram{1}, \ydiagram{1}\right)\oplus \left(\ydiagram{2}, \ydiagram{2}\right)\oplus\left(\ydiagram{2}, \ydiagram{1,1}\right)\oplus \left(\ydiagram{1,1}, \ydiagram{2}\right)\oplus\left(\ydiagram{1,1}, \ydiagram{1,1}\right)\oplus \left(\ydiagram{2,1}, \ydiagram{2,1}\right) \\
         \ydiagram{2,1}\otimes \ydiagram{1,1,1}^*&\cong \left(\ydiagram{1},\ydiagram{1}\right) \oplus \left(\ydiagram{2},\ydiagram{1,1}\right) \oplus \left(\ydiagram{1,1},\ydiagram{1,1}\right) \oplus \left(\ydiagram{2,1},\ydiagram{1,1,1}\right)\\
         \ydiagram{1,1,1}\otimes \ydiagram{3}^*&\cong \left(\ydiagram{1,1},\ydiagram{2}\right) \oplus \left(\ydiagram{1,1,1},\ydiagram{3}\right)\\
          \ydiagram{1,1,1}\otimes \ydiagram{2,1}^*&\cong \left(\ydiagram{1},\ydiagram{1}\right) \oplus \left(\ydiagram{1,1},\ydiagram{2}\right) \oplus \left(\ydiagram{1,1},\ydiagram{1,1}\right) \oplus \left(\ydiagram{1,1,1},\ydiagram{2,1}\right)\\
         \ydiagram{1,1,1}\otimes \ydiagram{1,1,1}^*&\cong \left(\emptyset, \emptyset\right) \oplus \left(\ydiagram{1}, \ydiagram{1}\right)\oplus \left(\ydiagram{1,1}, \ydiagram{1,1}\right)\oplus\left(\ydiagram{1,1,1}, \ydiagram{1,1,1}\right).
    \end{align*}
  The twists of the simple summands that have not yet been computed in this section are
    \[
\begin{tabular}{c|c c c c c c c c c}
 Object &  $\left(\ydiagram{3}, \ydiagram{3}\right)$  &
        $  \left(\ydiagram{3}, \ydiagram{2,1}\right)$  &
        $  \left(\ydiagram{3}, \ydiagram{1,1,1}\right)$  &
        $  \left(\ydiagram{2,1}, \ydiagram{3}\right)$  &
        $  \left(\ydiagram{2,1}, \ydiagram{2,1}\right)$  &
        $  \left(\ydiagram{2,1}, \ydiagram{1,1,1}\right)$  &
        $  \left(\ydiagram{1,1,1}, \ydiagram{3}\right)$  &
        $  \left(\ydiagram{1,1,1}, \ydiagram{2,1}\right)$  &
        $  \left(\ydiagram{1,1,1}, \ydiagram{1,1,1}\right)$  \\ \hline \\
 Twist &      $q^{6N+12}$
       & $q^{6N+6}$
      & $q^{6N}$
       & $q^{6N+6}$
        &$q^{6N}$
        &$q^{6N-6}$
         &$q^{6N}$
         &$q^{6N-6}$
        &$q^{6N-12}$.  
 \end{tabular}
    \]
    When $k = N-2$ and $N\neq 10$ the only simple summands with trivial twist are $(\emptyset, \emptyset)$ and $\left(\ydiagram{1,1},\ydiagram{1,1}\right)$. When $k = N$ the only simple summands with trivial twist are $(\emptyset, \emptyset)$, $\left(\ydiagram{2},\ydiagram{1,1}\right)$, and $\left(\ydiagram{1,1},\ydiagram{2}\right)$. When $k = N+2$ and $N\neq 8$ the only simple summands with trivial twist are $(\emptyset, \emptyset)$ and $(\ydiagram{2},\ydiagram{2})$.
\end{lem}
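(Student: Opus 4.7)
The plan is to verify the lemma in three stages: compute the fusion decompositions, compute the twists of the new simple summands, and then perform elementary number-theoretic analysis to identify which summands have trivial twist in each of the three cases $k \in \{N-2, N, N+2\}$. All three stages reduce to routine, if laborious, bookkeeping, and the main challenge is ensuring no case is overlooked.

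For the fusion decompositions, I would appeal to the observation in Subsection~\ref{sub:sl} that whenever $\lambda_1 + \mu_1 < k$ the decomposition of $V_\lambda \otimes V_\mu$ in $\mathcal{C}$ agrees with the classical decomposition in $\operatorname{Rep}(\mathfrak{sl}_N)$. Since every diagram appearing has at most three boxes in its first row, and the hypothesis $N, k \geq 7$ is well within the stable range for the $GL(N)$-style labels, each of the nine products $V_\lambda \otimes V_\mu^*$ reduces to a direct Littlewood--Richardson computation. The only subtlety is the translation between the $GL(N)$-pair $(\lambda, \mu)$ and the honest highest weight of $\mathfrak{sl}_N$, which is handled by the explicit formula given in Subsection~\ref{sub:sl}.

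For the twists, I would follow the worked example of $(\ydiagram{1}, \ydiagram{1})$ and apply $\theta_{V_\lambda} = q^{\langle \lambda, \lambda + 2\rho\rangle}$ to each of the nine new summands, using the Killing form and $\rho = \sum_{i=1}^{N-1}(N-i)\epsilon_i$ from Subsection~\ref{sub:sl}. Each calculation is a direct inner product. The conjectural closed form $q^{\frac{(|\lambda|+|\mu|)N^2 + (c_\lambda + c_\mu)N - (|\lambda|-|\mu|)^2}{N}}$ given in the footnote of Subsection~\ref{sub:sl} provides a useful cross-check; one reads off immediately that here the exponent depends only on $|\lambda|+|\mu| = 6$ and the quantity $c_\lambda + c_\mu$, which explains why the nine values in the table group as $6N \pm 6j$ for small $j$.

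Finally, the trivial-twist analysis is pure elementary number theory. The condition $q^n = 1$ is equivalent to $2(N+k) \mid n$, and in our three cases $2(N+k)$ equals one of $4N-4$, $4N$, $4N+4$. For each of the nine depth-$3$ summands I would reduce the twist exponent $n$ modulo the appropriate divisor, obtaining a small residue and hence a divisibility condition on $N$. Under the hypothesis $N, k \geq 7$, each such condition either fails outright or forces one of finitely many small values of $N$; organised into a single residue table, the calculation shows that the only surviving exceptional cases are $N = 10$ with $k = N-2$ (which makes $(\ydiagram{3},\ydiagram{3})$ trivial, since $4N-4 = 36$ then divides $6N+12 = 72$) and $N = 8$ with $k = N+2$ (which makes $(\ydiagram{1,1,1},\ydiagram{1,1,1})$ trivial, since $4N+4 = 36$ then divides $6N-12 = 36$). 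These are exactly the cases excluded in the statement. The main obstacle is simply the volume of bookkeeping, and the risk of missing a borderline case; once the residue table is written out, the analysis is mechanical.
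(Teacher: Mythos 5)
Your proposal is correct and matches the paper's approach: the paper's entire proof of this lemma is ``Direct computation,'' and your three-stage plan (stable-range Littlewood--Richardson for the fusions, the Killing-form twist formula for the exponents, and residue analysis modulo $2(N+k)$ for triviality) is exactly that computation carried out explicitly. Your identification of the excluded cases --- $N=10$, $k=N-2$ making $\theta_{(\ydiagram{3},\ydiagram{3})}$ trivial since $36\mid 72$, and $N=8$, $k=N+2$ making $\theta_{(\ydiagram{1,1,1},\ydiagram{1,1,1})}$ trivial since $36\mid 36$ --- is precisely why those values are excepted in the statement.
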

\begin{proof}
    Direct computation.
\end{proof}

With this twist analysis we can now pin down the multiplicity in the cases of $k = N \pm 2$.
\begin{prop}\label{lem:mult1}
    Let $N\in \mathbb{N}_{\geq 7} - \{10\}$ and suppose that $A$ is 1-super-transitive and $k = N-2$. Then 
    \[  \dim\Hom_{\mathcal{C}}\left( \left(\ydiagram{1,1},\ydiagram{1,1} \right)\to A\right) = 1.         \]
\end{prop}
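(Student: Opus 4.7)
The plan is to follow the spirit of Proposition 3.4, which used Schur's lemma on two simple objects of $\mathcal{C}_A$ at depth 2. For $k = N-2$ the object $\ydiagram{1,1}$ is no longer simple in $\mathcal{C}_A$, since $(\ydiagram{1,1},\ydiagram{1,1})$ is a summand of $A$ that contributes endomorphisms, so the depth-2 argument is unavailable and I would work at depth 3 instead.

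Write $m := \dim\Hom_\mathcal{C}((\ydiagram{1,1},\ydiagram{1,1}) \to A)$, so that $m \geq 1$ by Proposition 3.3 and the goal is $m \leq 1$. Using the free module adjunction $\dim\Hom_{\mathcal{C}_A}(\mathcal{F}_A(X) \to \mathcal{F}_A(Y)) = \dim\Hom_\mathcal{C}(A \to X \otimes Y^*)$ together with Lemma 3.5 (which ensures that for $k=N-2$ and $N \neq 10$ the only simples from each relevant product with trivial twist are $(\emptyset,\emptyset)$ and $(\ydiagram{1,1},\ydiagram{1,1})$), I would compute the depth-3 hom data in $\mathcal{C}_A$. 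The outcome is that $\ydiagram{3}$ remains simple and is disjoint (has zero hom) from $\ydiagram{2,1}$ and $\ydiagram{1,1,1}$, while
\[
\dim\End_{\mathcal{C}_A}(\ydiagram{2,1}) = \dim\End_{\mathcal{C}_A}(\ydiagram{1,1,1}) = 1+m, \qquad \dim\Hom_{\mathcal{C}_A}(\ydiagram{2,1} \to \ydiagram{1,1,1}) = m.
\]
Writing decompositions $\ydiagram{2,1} = \bigoplus a_i X_i$ and $\ydiagram{1,1,1} = \bigoplus b_i X_i$ into distinct simples of $\mathcal{C}_A$, these three numerical identities yield $\sum(a_i - b_i)^2 = 2(1+m) - 2m = 2$, which forces the sequences $(a_i)$ and $(b_i)$ to differ in exactly two positions by $\pm 1$. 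A short case analysis eliminates the two ``same sign'' configurations (they force $2x+2y+2=0$ with $x,y \geq 0$, impossible), leaving only $a_{j_1} = x+1, b_{j_1}=x, a_{j_2} = x, b_{j_2}=x+1$ together with $a_i = b_i$ elsewhere, giving the relation $m = 2x^2 + 2x + \sum_{i\neq j_1,j_2} a_i^2$.

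The main obstacle is ruling out $x \geq 1$ and higher ``common'' contributions. I would finish by bringing in Lemma 3.2 and the identity $\ydiagram{1,1} \otimes \ydiagram{1} = \ydiagram{2,1} \oplus \ydiagram{1,1,1}$. A brief twist check analogous to the one above shows that $\ydiagram{2}$ remains simple in $\mathcal{C}_A$; decomposing $\ydiagram{1,1} = \bigoplus c_k Y_k$ (with $\sum c_k^2 = 1+m$ by the same adjunction) and applying Lemma 3.2 to the pair $(\ydiagram{2}, Y_k)$ forces each $Y_k \otimes \ydiagram{1}$ to share a simple summand with $\ydiagram{2} \otimes \ydiagram{1} = \ydiagram{3} \oplus \ydiagram{2,1}$. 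Since $\ydiagram{3}$ is simple and not shared with $\ydiagram{2,1}$ or $\ydiagram{1,1,1}$, this connectivity couples the decomposition of $\ydiagram{1,1}$ to the common $X_i$'s appearing in $\ydiagram{2,1}$, producing a positivity/matching constraint across $\sum c_k (Y_k \otimes \ydiagram{1}) = \sum (a_i+b_i) X_i$ that forces $x = 0$ and $\sum_{i\neq j_1,j_2} a_i^2 \leq 1$, hence $m \leq 1$. Combined with Proposition 3.3 this gives $m = 1$, as required.
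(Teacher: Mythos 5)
Your depth-3 setup is correct as far as it goes: the adjunction computations $\dim\End_{\mathcal{C}_A}(\ydiagram{2,1}) = \dim\End_{\mathcal{C}_A}(\ydiagram{1,1,1}) = 1+m$, $\dim\Hom_{\mathcal{C}_A}(\ydiagram{2,1}\to\ydiagram{1,1,1}) = m$, and the simplicity and disjointness of $\ydiagram{3}$ all follow from Lemma~\ref{lem:twistanalysis} exactly as you say. The problem is that this data is self-cancelling: $\sum_i(a_i-b_i)^2 = 2(1+m)-2m = 2$ holds for \emph{every} value of $m$, and the identity $m = 2x^2+2x+\sum_{i\neq j_1,j_2}a_i^2$ you extract is equivalent to $\sum a_i^2 = 1+m$, i.e.\ it is a tautology that places no upper bound on $m$ whatsoever. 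So at the end of your second paragraph you have not yet excluded any value of $m$, and the entire burden of the proof falls on the final ``connectivity'' step. That step is asserted, not proved: you claim that Lemma~\ref{lem:basOb} applied to the pairs $(\ydiagram{2}, Y_k)$ ``forces $x=0$ and $\sum_{i\neq j_1,j_2}a_i^2\leq 1$,'' but you never exhibit the matching argument, and it is far from clear it closes for arbitrary $m$ — the constraint only says each $Y_k\otimes\ydiagram{1}$ shares a summand with $\sum_i a_iX_i$, which leaves a large and growing family of admissible bipartite graphs as $m$ increases. A case analysis over an unbounded set of configurations is not a proof.

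The paper avoids this by first establishing the a priori bound $m\leq 2$, and the mechanism is the one ingredient your proposal is missing: the mixed object $\left(\ydiagram{1,1},\ydiagram{1}\right)$. Because $\left(\ydiagram{1,1},\ydiagram{1}\right)\otimes\left(\ydiagram{1,1},\ydiagram{1}\right)^*$ contains $\left(\ydiagram{1,1},\ydiagram{1,1}\right)$ with multiplicity $2$ while $\left(\ydiagram{1,1},\ydiagram{1}\right)\otimes\ydiagram{1}^*$ contains it with multiplicity $1$, one gets $\dim\End_{\mathcal{C}_A}\left(\left(\ydiagram{1,1},\ydiagram{1}\right)\right) = 1+2m$ and $\dim\Hom_{\mathcal{C}_A}\left(\left(\ydiagram{1,1},\ydiagram{1}\right)\to\ydiagram{1}\right) = m$; since $\ydiagram{1}$ is simple (Lemma~\ref{lem:simp}), writing $\left(\ydiagram{1,1},\ydiagram{1}\right)\cong m\cdot\ydiagram{1}\oplus\bigoplus\alpha_iX_i$ forces $m^2+\sum\alpha_i^2 = 1+2m$, hence $m\leq 2$. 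Only then does the paper run a fusion-graph argument of the flavour you describe — but now it is a \emph{finite} check: assuming $m=2$ one pins down $\ydiagram{1,1}\cong X_1\oplus X_2\oplus X_3$, $\ydiagram{2,1}\cong W\oplus Z_1\oplus Z_2$, $\ydiagram{1,1,1}\cong Y\oplus Z_1\oplus Z_2$, enumerates the four ways to place six edges, and contradicts Lemma~\ref{lem:basOb} in each. If you want to salvage your route, you must either supply the quadratic bound from $\left(\ydiagram{1,1},\ydiagram{1}\right)$ (or some substitute for it) before the combinatorics, or actually prove the matching claim uniformly in $m$; as written there is a genuine gap.
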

\begin{proof}
Define $m:= \dim\Hom_{\mathcal{C}}\left( \left(\ydiagram{1,1},\ydiagram{1,1} \right)\to A\right)$. By Proposition~\ref{prop:levels} we know that $m\geq 1$. Thus it remains to obtain an upper bound.

Using the following simple decompositions in $\mathcal{C}$:
\begin{align*}
    \left(\ydiagram{1,1},\ydiagram{1} \right)\otimes \left(\ydiagram{1,1},\ydiagram{1} \right)^* \cong  &\left(\emptyset, \emptyset \right)\oplus 2\left(\ydiagram{1}, \ydiagram{1}\right)\oplus 2\left(\ydiagram{1,1}, \ydiagram{1,1}\right)\oplus \left(\ydiagram{2}, \ydiagram{2}\right)\oplus \left(\ydiagram{2}, \ydiagram{1,1}\right)\oplus \left(\ydiagram{1,1}, \ydiagram{2}\right)\oplus \left(\ydiagram{1,1}, \ydiagram{1,1}\right)\\
    &\oplus \left(\ydiagram{2,1}, \ydiagram{2,1}\right)\oplus \left(\ydiagram{2,1}, \ydiagram{1,1,1}\right)\oplus \left(\ydiagram{1,1,1}, \ydiagram{2,1}\right)\oplus \left(\ydiagram{1,1,1}, \ydiagram{1,1,1}\right)\\
     \left(\ydiagram{1,1},\ydiagram{1} \right)\otimes \ydiagram{1}^* \cong&\left(\ydiagram{1},\ydiagram{1} \right) \oplus \left(\ydiagram{1,1},\ydiagram{2} \right)\oplus \left(\ydiagram{1,1},\ydiagram{1,1} \right)
\end{align*}
along with Lemma~\ref{lem:twistanalysis} and the free module functor adjunction we compute
\[\dim\End_{\mathcal{C}_A}\left( \left(\ydiagram{1,1},\ydiagram{1} \right)    \right  ) = 1 + 2m \qquad \text{ and } \qquad  \dim\Hom_{\mathcal{C}_A}\left( \left(\ydiagram{1,1},\ydiagram{1} \right)   \to \ydiagram{1} \right  ) = m.\]
As $\ydiagram{1}$ is simple, we must have that $ \left(\ydiagram{1,1},\ydiagram{1} \right)$ decomposes as
\[ \left(\ydiagram{1,1},\ydiagram{1} \right) \cong m \cdot  \ydiagram{1} \oplus \bigoplus \alpha_i \cdot  X_i     \]
    where the $X_i \in \mathcal{C}_A$ are pairwise non-isomorphic simple objects. Further we must have 
    \[      m^2 + \sum \alpha_i^2   = 1 +2m.       \]
    As the $\alpha_i$ are non-negative integers we obtain the inequality
    \[  1 + 2m - m^2 \geq 0       \]
    which is only valid for $m\in \{0,1,2\}$.

    For a contradiction suppose $m=2$. Again using Lemma~\ref{lem:twistanalysis} and the free module functor adjunction we compute 
    \begin{alignat*}{3}
    \dim\End_{\mathcal{C}_A}\left( \ydiagram{1,1}     \right  ) = 1 + m &= 3 \qquad 
    \dim\End_{\mathcal{C}_A}\left( \ydiagram{2}     \right  ) &&= 1   \qquad
    \dim\Hom_{\mathcal{C}_A}\left( \ydiagram{2} \to  \ydiagram{1,1}    \right  ) &= 0\\
    \dim\End_{\mathcal{C}_A}( \ydiagram{3}  ) &= 1 \qquad
    \dim\End_{\mathcal{C}_A}\left( \ydiagram{2,1} \right ) = 1+m &&= 3 \qquad
    \dim\End_{\mathcal{C}_A}\left( \ydiagram{1,1,1}  \right) = 1+m &= 3\\
    \dim\Hom_{\mathcal{C}_A}\left( \ydiagram{3} \to \ydiagram{2,1} \right) &= 0 \qquad
    \dim\Hom_{\mathcal{C}_A}\left( \ydiagram{3} \to \ydiagram{1,1,1} \right) &&= 0 \qquad
    \dim\Hom_{\mathcal{C}_A}\left( \ydiagram{2,1} \to \ydiagram{1,1,1} \right) = m&= 2.
\end{alignat*}
These computations are only compatible with the following simple decompositions of the following free modules:
\begin{align*}
     \ydiagram{1,1} &\cong X_1 \oplus X_2 \oplus X_3\\
     \ydiagram{2,1} &\cong W  \oplus Z_1 \oplus Z_2 \\
     \ydiagram{1,1,1} &\cong Y  \oplus Z_1 \oplus Z_2.
\end{align*}
Where the set $\{X_1, X_2 , X_3, W, Y, Z_1,Z_2\}$ are pairwise non-isomorphic simple objects in $\mathcal{C}_A$.

Using that the free module functor is monoidal, we obtain
\[ (X_1 \otimes \ydiagram{1}) \oplus (X_2 \otimes \ydiagram{1}) \oplus (X_3 \otimes \ydiagram{1}) \cong W \oplus Y \oplus 2 \cdot Z_1 \oplus 2 \cdot Z_2.    \]
On the fusion graph for $\ydiagram{1}$ this means we have 6 edges from the 3 vertices $\{X_1, X_2, X_3\}$ to the 4 vertices $\{W,Y, Z_1, Z_2\}$. Either at least one of the $X_i$ has only one edge out of it, or all $X_i$ have exactly two edges leaving them. Breaking the free symmetries, this means either: 

\begin{enumerate}
    \item $X_1$ connects to $Z_1$ and nothing else, and $X_2$ connects to $Z_1$, in which case $\dim\Hom(X_1 \otimes \ydiagram{1} \to X_3 \otimes \ydiagram{1}) = 0$.
    
    \item $X_1$ connects to $W$ (or $Y$) and nothing else, in which case $\dim\Hom(X_1 \otimes \ydiagram{1} \to X_2 \otimes \ydiagram{1}) = 0$.

     \item $X_1$ connects to $W$ and $Z_1$, $X_2$ connects to $Z_1$ and $Z_2$, and $X_3$ connects to $Z_2$ and $W$, in which case $\dim\Hom(X_1 \otimes \ydiagram{1} \to X_3 \otimes \ydiagram{1}) = 0$.

     \item $X_1$ connects to $W$ and $Y$, $X_2$ connects to $Z_1$ and $Z_2$, and $X_3$ connects to $Z_1$ and $Z_2$, in which case $\dim\Hom(X_1 \otimes \ydiagram{1} \to X_2 \otimes \ydiagram{1}) = 0$.
\end{enumerate} 
In all cases we have a contradiction to Lemma~\ref{lem:basOb} and so $m \neq 2$.
\end{proof}

Essentially the same argument (simply transpose all Young diagrams) gives the following:
\begin{prop}\label{lem:mult2}
    Let $N\in \mathbb{N}_{\geq 7} -\{8\}$ and suppose that $A$ is 1-super-transitive and $k = N+2$. Then 
    \[  \dim\Hom_{\mathcal{C}}\left( \left(\ydiagram{2},\ydiagram{2} \right)\to A\right) = 1.         \]
\end{prop}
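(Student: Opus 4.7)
The plan is to mirror the proof of Proposition~\ref{lem:mult1} by transposing every Young diagram that appears. Concretely, I would set $m := \dim\Hom_{\mathcal{C}}\left( \left(\ydiagram{2},\ydiagram{2}\right) \to A\right)$. Proposition~\ref{prop:levels} already gives $m \geq 1$, so the whole task is to rule out $m \geq 2$. Step one is to compute the relevant fusion rules in $\mathcal{C}$: $\left(\ydiagram{2},\ydiagram{1}\right) \otimes \left(\ydiagram{2},\ydiagram{1}\right)^*$ and $\left(\ydiagram{2},\ydiagram{1}\right) \otimes \ydiagram{1}^*$. These are obtained from the decompositions in Lemma~\ref{lem:twistanalysis} by transposing, and then the free-module adjunction combined with the twist table of Lemma~\ref{lem:twistanalysis} (noting that at $k=N+2$ the only simples with trivial twist among the depth-$\leq 3$ objects are $(\emptyset,\emptyset)$ and $(\ydiagram{2},\ydiagram{2})$, using $N \neq 8$) yields $\dim\End_{\mathcal{C}_A}\left(\left(\ydiagram{2},\ydiagram{1}\right)\right) = 1+2m$ and $\dim\Hom_{\mathcal{C}_A}\left(\left(\ydiagram{2},\ydiagram{1}\right) \to \ydiagram{1}\right) = m$.

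Next, I would write $\left(\ydiagram{2},\ydiagram{1}\right) \cong m \cdot \ydiagram{1} \oplus \bigoplus_i \alpha_i \cdot X_i$ in $\mathcal{C}_A$ with the $X_i$ pairwise non-isomorphic and distinct from $\ydiagram{1}$ (using Lemma~\ref{lem:simp}). Squaring multiplicities gives $m^2 + \sum \alpha_i^2 = 1+2m$, forcing $m \in \{0,1,2\}$. The whole remaining work is to rule out $m = 2$.

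Assuming $m=2$, I would then (again using the transposed versions of the fusion rules and Lemma~\ref{lem:twistanalysis}) compute the endomorphism algebras and small hom-spaces of the free modules $\ydiagram{2}$, $\ydiagram{1,1}$, $\ydiagram{3}$, $\ydiagram{2,1}$, $\ydiagram{1,1,1}$ inside $\mathcal{C}_A$. The pattern forces
\begin{align*}
\ydiagram{2} &\cong X_1 \oplus X_2 \oplus X_3, \\
\ydiagram{2,1} &\cong W \oplus Z_1 \oplus Z_2, \\
\ydiagram{3} &\cong Y \oplus Z_1 \oplus Z_2,
\end{align*}
with all seven simples pairwise non-isomorphic (playing the role of the transposed decompositions in the proof of Proposition~\ref{lem:mult1}). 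Monoidality of $\mathcal{F}_A$ applied to $\ydiagram{2} \otimes \ydiagram{1} \cong \ydiagram{3} \oplus \ydiagram{2,1}$ then gives
\[ (X_1 \otimes \ydiagram{1}) \oplus (X_2 \otimes \ydiagram{1}) \oplus (X_3 \otimes \ydiagram{1}) \cong W \oplus Y \oplus 2\cdot Z_1 \oplus 2\cdot Z_2, \]
so the bipartite fusion graph for $\ydiagram{1}$ from $\{X_1,X_2,X_3\}$ to $\{W,Y,Z_1,Z_2\}$ has exactly six edges on three left-vertices.

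The main obstacle, as in the proof of Proposition~\ref{lem:mult1}, is the combinatorial case analysis on these edge configurations: some $X_i$ has valence $\leq 1$, or every $X_i$ has valence exactly $2$. In each of the four essentially distinct configurations, one finds a pair $X_i, X_j$ with $\dim\Hom_{\mathcal{C}_A}(X_i \otimes \ydiagram{1} \to X_j \otimes \ydiagram{1}) = 0$, contradicting Lemma~\ref{lem:basOb}. Hence $m=2$ is impossible, and $m=1$, which is the claim. The exclusion $N \neq 8$ is precisely what is needed to apply Lemma~\ref{lem:twistanalysis} without extra trivially-twisted summands appearing in the depth-$3$ calculation.
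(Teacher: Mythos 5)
Your proposal is correct and is exactly the paper's proof: the paper disposes of this proposition with the single remark ``Essentially the same argument (simply transpose all Young diagrams) gives the following,'' and your transposed decompositions (with $\left(\ydiagram{2},\ydiagram{1}\right)$ replacing $\left(\ydiagram{1,1},\ydiagram{1}\right)$, and $\ydiagram{2}$, $\ydiagram{3}$ replacing $\ydiagram{1,1}$, $\ydiagram{1,1,1}$), the multiplicity bound $m^2+\sum\alpha_i^2 = 1+2m$, and the four-case edge-count contradiction against Lemma~\ref{lem:basOb} are precisely the intended details. You have also correctly identified the role of the exclusion $N\neq 8$ in applying Lemma~\ref{lem:twistanalysis}.
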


With the structure of the simple summands of a 1-super-transitive \`etale now understood up to depth 3, we are now able to determine the fusion graph of $\mathcal{C}_A$ up to depth 3, up to a finite number of possibilities.

\begin{thm}\label{thm:graph2}
    Let $N\in \mathbb{N}_{\geq 7}$, let $A$ a 1-super-transitive \`etale algebra in $\mathcal{C}$ and suppose $k= N$. Then we have the following simple decompositions and isomorphisms of free modules:
    \begin{align*}
        \ydiagram{2,1}&\cong \ydiagram{3}\oplus A \oplus B\\
        \ydiagram{2} &\cong \ydiagram{1,1} \\
         \ydiagram{3} &\cong \ydiagram{1,1,1}
    \end{align*}
    where $\{\ydiagram{2},\ydiagram{3}, A,B\}$ are pairwise non-isomorphic simple objects in $\mathcal{C}_A$. Furthermore, the fusion graph for $\ydiagram{1} \in \mathcal{C}_A$ up to depth 3 is:
    \[    \att{CasekN}{.6} \]
    In particular we have $\dim \End_{\mathcal{C}_A}(\ydiagram{1}^{\otimes 3}) = 24$.
\end{thm}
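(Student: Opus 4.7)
The strategy is to combine the free module adjunction
\[\dim \Hom_{\mathcal{C}_A}(X, Y) = \dim \Hom_{\mathcal{C}}(X \otimes Y^*, A)\]
with the twist analysis of Lemma~\ref{lem:twistanalysis}. At $k = N$ the only simple summands of $A$ that can appear in any triple-box product $X \otimes Y^*$ are the three trivial-twist summands $\mathbf{1}$, $(\ydiagram{2},\ydiagram{1,1})$, and $(\ydiagram{1,1},\ydiagram{2})$, each of multiplicity one in $A$ (by 1-super-transitivity together with Proposition~\ref{lem:mult3}). Counting these multiplicities in decompositions from Lemma~\ref{lem:twistanalysis} pins down all the $\Hom$ and $\End$ dimensions we need.

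First I would handle $\ydiagram{3}$ and $\ydiagram{1,1,1}$: the only trivial-twist summand of each of $\ydiagram{3} \otimes \ydiagram{3}^*$ and $\ydiagram{1,1,1} \otimes \ydiagram{1,1,1}^*$ is $(\emptyset,\emptyset)$, so both free modules are simple in $\mathcal{C}_A$. The decomposition $\ydiagram{3} \otimes \ydiagram{1,1,1}^* \cong (\ydiagram{2},\ydiagram{1,1}) \oplus (\ydiagram{3},\ydiagram{1,1,1})$ contributes a unique trivial-twist summand to $A$, giving $\dim \Hom_{\mathcal{C}_A}(\ydiagram{3},\ydiagram{1,1,1}) = 1$ and hence $\ydiagram{3} \cong \ydiagram{1,1,1}$. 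The same recipe applied to $\ydiagram{2,1}$ yields $\dim \End_{\mathcal{C}_A}(\ydiagram{2,1}) = 3$ and $\dim \Hom_{\mathcal{C}_A}(\ydiagram{3},\ydiagram{2,1}) = 1$, so $\ydiagram{2,1} \cong \ydiagram{3} \oplus X$ with $\dim \End_{\mathcal{C}_A}(X) = 2$ and $X$ containing no $\ydiagram{3}$-summand (else $\dim \Hom_{\mathcal{C}_A}(\ydiagram{3},\ydiagram{2,1}) \geq 2$). The only non-negative integer decomposition $\sum \alpha_i^2 = 2$ then forces $X \cong A \oplus B$ for distinct simples $A, B$, each non-isomorphic to $\ydiagram{3}$.

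The main obstacle is ruling out $A \cong \ydiagram{2}$ or $B \cong \ydiagram{2}$. For this I would apply Pieri's rule to the Young diagram $(2^{N-2},1)$ representing $\ydiagram{2,1}^*$ in the $\mathfrak{sl}_N$ labeling, obtaining the four horizontal strips of size two and hence
\[
\ydiagram{2} \otimes \ydiagram{2,1}^* \;\cong\; (\ydiagram{2},\ydiagram{2,1}) \oplus (\ydiagram{1},\ydiagram{2}) \oplus (\ydiagram{1},\ydiagram{1,1}) \oplus \ydiagram{1}^*.
\]
A direct Killing-form twist computation, in the style of the worked example of $(\ydiagram{1},\ydiagram{1})$ in Subsection~\ref{sub:sl}, then shows that for $N \geq 7$ at $k = N$ each of these four summands has non-trivial twist and so cannot appear in $A$. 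Hence $\dim \Hom_{\mathcal{C}_A}(\ydiagram{2},\ydiagram{2,1}) = 0$, so neither $A$ nor $B$ is isomorphic to $\ydiagram{2}$, and $\{\ydiagram{2},\ydiagram{3},A,B\}$ are pairwise non-isomorphic simples in $\mathcal{C}_A$.

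The fusion graph description now reads off from $\ydiagram{1} \otimes \ydiagram{2} \cong \ydiagram{3} \oplus \ydiagram{2,1} \cong 2\ydiagram{3} \oplus A \oplus B$ in $\mathcal{C}_A$, together with $\ydiagram{2} \cong \ydiagram{1,1}$ from Proposition~\ref{lem:mult3}. The endomorphism count follows from $\ydiagram{1}^{\otimes 3} \cong 2\ydiagram{3} \oplus 2\ydiagram{2,1}$:
\[
\dim \End_{\mathcal{C}_A}(\ydiagram{1}^{\otimes 3}) = 4\dim \End_{\mathcal{C}_A}(\ydiagram{3}) + 4\dim \End_{\mathcal{C}_A}(\ydiagram{2,1}) + 8\dim \Hom_{\mathcal{C}_A}(\ydiagram{3},\ydiagram{2,1}) = 4 + 12 + 8 = 24.
\]
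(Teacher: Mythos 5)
Your proposal is correct and follows essentially the same route as the paper: combine the free-module adjunction with the $k=N$ twist analysis of Lemma~\ref{lem:twistanalysis} and the multiplicity-one statement of Proposition~\ref{lem:mult3} to compute the table of hom spaces among the depth-$3$ free modules, deduce $\ydiagram{3}\cong\ydiagram{1,1,1}$ simple and $\ydiagram{2,1}\cong\ydiagram{3}\oplus A\oplus B$, and then read the fusion graph off $\ydiagram{1}\otimes\ydiagram{2}\cong 2\cdot\ydiagram{3}\oplus A\oplus B$. The one place you go beyond the paper is the explicit verification, via the decomposition of $\ydiagram{2}\otimes\ydiagram{2,1}^*$ and a twist check on its four summands, that neither $A$ nor $B$ is isomorphic to $\ydiagram{2}$; the paper asserts the pairwise non-isomorphism of $\{\ydiagram{2},\ydiagram{3},A,B\}$ without spelling this step out, so your addition is a genuine (and welcome) completion of the argument rather than a deviation from it.
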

\begin{proof}
    We recall from Proposition~\ref{lem:mult3} that
    \[  \dim\Hom_{\mathcal{C}}\left( \left(\ydiagram{2},\ydiagram{1,1} \right)\to A\right) = \dim\Hom_\mathcal{C}\left( \left(\ydiagram{1,1},\ydiagram{2} \right)\to A\right)=1.         \]
    As in the proof of the referenced proposition, we have \[\dim\End_{\mathcal{C}_A}(\ydiagram{2})  = \dim\End_{\mathcal{C}_A}\left(\ydiagram{1,1}\right) = \dim\Hom_{\mathcal{C}_A}\left(\ydiagram{2}\to \ydiagram{1,1}\right) = 1.\]
    This implies that $\ydiagram{2}$ and $\ydiagram{1,1}$ are isomorphic simples in $\mathcal{C}_A$, and that $\ydiagram{1}^{\otimes 2} \cong 2 \cdot \ydiagram{2}$.

    Using Lemma~\ref{lem:twistanalysis} we obtain the following table for the dimensions of the hom spaces in $\mathcal{C}_A$ between the depth 3 free modules $\left\{\ydiagram{3}, \ydiagram{2,1},\ydiagram{1,1,1}\right\}$.
    \[\begin{tabular}{c|c c c}
        $\dim\Hom_{\mathcal{C}_A}(X\to Y)$ & $ \ydiagram{3}$& $\ydiagram{2,1}$ & $\ydiagram{1,1,1}$   \\\hline
        $ \ydiagram{3}$ & 1 & 1 & 1\\
        $ \ydiagram{2,1}$ & 1 & 3 & 1 \\
        $ \ydiagram{1,1,1}$ & 1 & 1 & 1
    \end{tabular}\]
    It follows that $\ydiagram{3}$ and $\ydiagram{1,1,1}$ are isomorphic simples, and that $\ydiagram{2,1}$ decomposes as $\ydiagram{3}\oplus A \oplus B$ where $\{\ydiagram{3}, A, B\}$ are pairwise non-isomorphic.

    Using that the free module functor is monoidal, we compute in $\mathcal{C}_A$ the fusion
    \[  \ydiagram{1}\otimes \ydiagram{2}\cong \ydiagram{3}\oplus \ydiagram{2,1} \cong 2\cdot  \ydiagram{3} \oplus A \oplus B. \]
    The fusion graph for $\ydiagram{1}$ up to depth 3 is thus as in the statement of the Theorem.
\end{proof}

The analogous theorems for $k = N\pm 2$ follow the same proof outline. The only difference is that we now have multiple fusion graphs compatible with decomposition rules for free modules obtained via the free module functor.

\begin{thm}\label{thm:graph1}
    Let $N\in \mathbb{N}_{\geq 7} - \{10\}$, let $A$ a 1-super-transitive \`etale algebra in $\mathcal{C}$ and suppose $k= N-2$. Then we have the following simple decompositions of free modules:
    \begin{align*}
        \ydiagram{1,1}&\cong R \oplus S\\
        \ydiagram{2,1} &\cong X \oplus Y\\
        \ydiagram{1,1,1} &\cong Y \oplus Z
    \end{align*}
    where $\{\ydiagram{2}, \ydiagram{3}, R, S, X, Y, Z\}$ are pairwise non-isomorphic simple objects in $\mathcal{C}_A$. Furthermore, using up the free symmetry $R\leftrightarrow S$ the fusion graph for $\ydiagram{1} \in \mathcal{C}_A$ up to depth 3 is either:
   \[ i),\quad  \att{CaseHard}{.6},\qquad\text{or}\qquad  ii),\quad \att{CaseKnown}{.6}   .\]
   In particular in both cases we have $\dim \End_{\mathcal{C}_A}(\ydiagram{1}^{\otimes 3}) = 15$.
\end{thm}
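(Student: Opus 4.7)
The plan is to follow the strategy of Theorem~\ref{thm:graph2}, adapting the twist data to the case $k = N-2$ and appending a short combinatorial case split at the end. First, I would invoke Proposition~\ref{lem:mult1} to fix $\dim\Hom_{\mathcal{C}}((\ydiagram{1,1},\ydiagram{1,1}) \to A) = 1$. Together with Lemma~\ref{lem:twistanalysis}, which singles out $(\ydiagram{1,1},\ydiagram{1,1})$ as the unique nontrivial simple with trivial twist at depth $\leq 3$ in this case, the free module adjunction produces all the relevant endomorphism and hom-space dimensions at depths $2$ and $3$.

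At depth $2$ one gets $\dim\End_{\mathcal{C}_A}(\ydiagram{2}) = 1$, $\dim\End_{\mathcal{C}_A}(\ydiagram{1,1}) = 2$, and $\dim\Hom_{\mathcal{C}_A}(\ydiagram{2} \to \ydiagram{1,1}) = 0$, which force $\ydiagram{2}$ to remain simple and $\ydiagram{1,1} \cong R \oplus S$ with $R \not\cong S$ and $R, S \not\cong \ydiagram{2}$. At depth $3$ the same bookkeeping gives $\dim\End_{\mathcal{C}_A}(\ydiagram{3}) = 1$, $\dim\End_{\mathcal{C}_A}(\ydiagram{2,1}) = \dim\End_{\mathcal{C}_A}(\ydiagram{1,1,1}) = 2$, vanishing hom spaces from $\ydiagram{3}$ to the other two, and $\dim\Hom_{\mathcal{C}_A}(\ydiagram{2,1} \to \ydiagram{1,1,1}) = 1$. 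These numerics are only compatible with the decompositions claimed in the theorem, with $\{\ydiagram{3}, X, Y, Z\}$ pairwise non-isomorphic.

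To pin down the fusion graph I would use monoidality of $\mathcal{F}_A$. From $\ydiagram{1}\otimes \ydiagram{2} \cong \ydiagram{3} \oplus \ydiagram{2,1} \cong \ydiagram{3} \oplus X \oplus Y$ we read off the three edges out of $\ydiagram{2}$. The identity $\ydiagram{1}\otimes \ydiagram{1,1} \cong X \oplus 2Y \oplus Z$ shows that $R \otimes \ydiagram{1}$ and $S \otimes \ydiagram{1}$ have all of their simple summands in $\{X,Y,Z\}$, so there are no edges from $R, S$ back to depths $1$ or $2$. Writing the multiplicities of $X, Y, Z$ in $R \otimes \ydiagram{1}$ and $S \otimes \ydiagram{1}$ as $(a_1,a_2,a_3)$ and $(b_1,b_2,b_3)$ respectively, with $a_i + b_i = (1,2,1)$, Lemma~\ref{lem:basOb} demands $\sum_i a_i b_i \geq 1$. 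Since $a_1 b_1 = a_3 b_3 = 0$, this forces $a_2 = b_2 = 1$, and modulo the free $R \leftrightarrow S$ swap only the two distributions $(0,1,0),(1,1,1)$ and $(1,1,0),(0,1,1)$ survive, giving precisely graphs (i) and (ii). In both cases $\ydiagram{1}^{\otimes 3} \cong \ydiagram{3} \oplus 2X \oplus 3Y \oplus Z$, so $\dim\End_{\mathcal{C}_A}(\ydiagram{1}^{\otimes 3}) = 1 + 4 + 9 + 1 = 15$.

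The argument is almost a transcription of the proof of Theorem~\ref{thm:graph2} with the updated twist data plugged in; the main (and only) obstacle is the concluding case split, whose resolution hinges on Lemma~\ref{lem:basOb} ruling out disjoint $\{X,Y,Z\}$-neighborhoods for $R$ and $S$. This is precisely why the $k = N$ case has a unique graph while $k = N-2$ (and symmetrically $k = N+2$ via Proposition~\ref{lem:mult2}) leaves two.
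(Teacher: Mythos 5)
Your proposal is correct and follows exactly the route the paper intends: the paper's own proof of this theorem is a two-line remark deferring to the argument of Theorem~\ref{thm:graph2} plus an appeal to Lemma~\ref{lem:basOb}, and you have faithfully filled in those details (adjunction plus the $k=N-2$ twist data from Lemma~\ref{lem:twistanalysis} and Proposition~\ref{lem:mult1} for the decompositions, monoidality of $\mathcal{F}_A$ for the edge counts, and Lemma~\ref{lem:basOb} applied to the pair $R,S$ to force $a_2=b_2=1$ and cut the possibilities down to graphs (i) and (ii)). The only cosmetic discrepancy is that the paper speaks of ``three possibilities, one incompatible with Lemma~\ref{lem:basOb}'' while your enumeration organizes the cases slightly differently, but your case analysis is complete and lands on the same two graphs and the same dimension count $1+4+9+1=15$.
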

\begin{proof}
    This is near identical to the proof of Theorem~\ref{thm:graph2}. There are three possibilities for the fusion graph, but one is incompatible with Lemma~\ref{lem:basOb}.
\end{proof}
The exact same idea works in the remaining case.

\begin{thm}\label{thm:graph3}
     Let $N\in \mathbb{N}_{\geq 7} - \{8\}$, let $A$ a 1-super-transitive \`etale algebra in $\mathcal{C}$ and suppose $k= N+2$. Then we have the following simple decompositions of free modules:
    \begin{align*}
        \ydiagram{2}&\cong R' \oplus S'\\
        \ydiagram{2,1} &\cong X' \oplus Y'\\
        \ydiagram{3} &\cong Y' \oplus Z'
    \end{align*}
    where $\left\{\ydiagram{1,1}, \ydiagram{1,1,1}, R', S', X', Y', Z'\right\}$ are pairwise non-isomorphic simple objects in $\mathcal{C}_A$. Furthermore, using up the free symmetry $R'\leftrightarrow S'$ the fusion graph for $\ydiagram{1} \in \mathcal{C}_A$ up to depth 3 is either:
   \[ i),\quad  \att{CaseHard2}{.6},\qquad\text{or}\qquad  ii),\quad \att{CaseKnown2}{.6}   .\]
   In particular in both cases we have $\dim \End_{\mathcal{C}_A}(\ydiagram{1}^{\otimes 3}) = 15$.
\end{thm}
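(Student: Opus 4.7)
The plan is to mirror the argument of Theorem~\ref{thm:graph1} under the Young-diagram transpose, which sends $\ydiagram{1,1}\leftrightarrow\ydiagram{2}$, $\ydiagram{1,1,1}\leftrightarrow\ydiagram{3}$, and $\ydiagram{2,1}\leftrightarrow\ydiagram{2,1}$. Under this symmetry the trivial-twist summand $(\ydiagram{1,1},\ydiagram{1,1})$ that drove the $k=N-2$ analysis is replaced by $(\ydiagram{2},\ydiagram{2})$, while the rest of the adjunction-and-twist bookkeeping should proceed almost verbatim.

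First I would combine Proposition~\ref{prop:levels}(iii) with Proposition~\ref{lem:mult2} to deduce that $(\ydiagram{2},\ydiagram{2})$ is a summand of $A$ with multiplicity exactly $1$. Applying Lemma~\ref{lem:twistanalysis} at $k=N+2$ with $N\neq 8$ (so that the only trivial-twist simples appearing in the relevant tensor products are $(\emptyset,\emptyset)$ and $(\ydiagram{2},\ydiagram{2})$) together with the free-module adjunction yields at depth $2$ the values
\[\dim\End_{\mathcal{C}_A}(\ydiagram{2})=2,\quad \dim\End_{\mathcal{C}_A}(\ydiagram{1,1})=1,\quad \dim\Hom_{\mathcal{C}_A}(\ydiagram{1,1}\to\ydiagram{2})=0,\]
and at depth $3$ the table with diagonal entries $(2,2,1)$ on $(\ydiagram{3},\ydiagram{2,1},\ydiagram{1,1,1})$, together with $\dim\Hom_{\mathcal{C}_A}(\ydiagram{3}\to\ydiagram{2,1})=1$ and the remaining two off-diagonals zero. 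These force the splittings $\ydiagram{2}\cong R'\oplus S'$, $\ydiagram{3}\cong Y'\oplus Z'$, $\ydiagram{2,1}\cong X'\oplus Y'$ sharing the summand $Y'$, while $\ydiagram{1,1}$ and $\ydiagram{1,1,1}$ remain simple; a few further vanishing $\dim\Hom$ computations (each of which is zero since no trivial-twist summand appears in the corresponding decomposition) then confirm that the seven labelled simples are pairwise non-isomorphic.

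It remains to pin down the fusion edges at depth $3$. Monoidality of the free module functor, together with the $\mathcal{C}$-level fusions $\ydiagram{1}\otimes\ydiagram{2}=\ydiagram{3}\oplus\ydiagram{2,1}$ and $\ydiagram{1}\otimes\ydiagram{1,1}=\ydiagram{2,1}\oplus\ydiagram{1,1,1}$, gives
\[R'\otimes\ydiagram{1}\;\oplus\; S'\otimes\ydiagram{1}\;\cong\; X'\oplus 2Y'\oplus Z',\qquad \ydiagram{1,1}\otimes\ydiagram{1}\;\cong\; X'\oplus Y'\oplus\ydiagram{1,1,1}.\]
By Lemma~\ref{lem:basOb}, $R'\otimes\ydiagram{1}$ and $S'\otimes\ydiagram{1}$ must share a simple summand; since $X'$ and $Z'$ each occur only once in the joint decomposition while $Y'$ occurs twice, the shared summand is forced to be $Y'$. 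Modulo the $R'\leftrightarrow S'$ symmetry this leaves exactly the two possibilities $(R'\otimes\ydiagram{1},S'\otimes\ydiagram{1})=(X'\oplus Y',\,Y'\oplus Z')$ of case i) and $(Y',\,X'\oplus Y'\oplus Z')$ of case ii). In either case $\ydiagram{1}^{\otimes 3}\cong 2X'\oplus 3Y'\oplus Z'\oplus\ydiagram{1,1,1}$, whence $\dim\End_{\mathcal{C}_A}(\ydiagram{1}^{\otimes 3})=4+9+1+1=15$.

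The main obstacle I anticipate is the case analysis of the depth-$3$ fusion graph: one must verify that the multiplicity data really admits no fusion pattern inconsistent with Lemma~\ref{lem:basOb} beyond (i) and (ii), and that no putative depth-$3$ simple collides with $\ydiagram{1,1}$ or with another depth-$3$ simple. Both checks hinge on the trivial-twist summands being exhausted by $(\emptyset,\emptyset)$ and $(\ydiagram{2},\ydiagram{2})$, which is precisely what fails at $N=8$, $k=10$ since there $(\ydiagram{1,1,1},\ydiagram{1,1,1})$ acquires trivial twist and could enter $A$; this explains the exclusion $N\neq 8$ in the hypothesis.
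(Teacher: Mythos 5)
Your proposal is correct and follows essentially the same route as the paper, which proves this theorem by declaring it ``near identical'' to the $k=N-2$ case (Theorem~\ref{thm:graph1}) under transposition of Young diagrams: the same combination of Proposition~\ref{lem:mult2}, the twist analysis of Lemma~\ref{lem:twistanalysis}, the free-module adjunction, and Lemma~\ref{lem:basOb} to cut the candidate fusion graphs down to the two listed. Your computations of the hom-space table, the forced shared summand $Y'$, the final count $\dim\End_{\mathcal{C}_A}(\ydiagram{1}^{\otimes 3})=4+9+1+1=15$, and the explanation of the $N=8$ exclusion via the twist of $\left(\ydiagram{1,1,1},\ydiagram{1,1,1}\right)$ all check out.
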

\begin{proof}
    Near identical to Theorem~\ref{thm:graph1}
\end{proof}

\section{The case of \texorpdfstring{$k = N$}{k=N}}\label{sec:kN}
The main goal in this section is to classify 1-super-transitive \`etale algebras $A\in \mathcal{C}$ in the $k=N$. We recall that in this case the fusion graph of $\ydiagram{1} \in \mathcal{C}_A$ up to depth 3 is:
 \[    \att{CasekN}{.6} \]
 Just from this information we will be able to deduce that there exists a faithful dominant monoidal functor
 \[  \overline{\mathcal{SE}_N} \to  \mathcal{C}_A   \]
 where $\mathcal{SE}_N$ is the tensor category from Definition~\ref{def:SEN}. With this functor in hand, the classification of $A$ then falls to standard techniques. 

\subsection{Initial Relations}\label{sec:inrel}
From the fusion graph and Theorem~\ref{thm:graph2} we have that $\operatorname{End}_{\mathcal{C}_A}( \ydiagram{1}^{\otimes 2}) \cong M_2(\mathbb{C})$, and that the minimal projections $p_{\ydiagram{2}}, p_{\ydiagram{1,1}} \in \operatorname{End}_{\mathcal{C}}( \ydiagram{1}^{\otimes 2}) = H_2(q)$ remain minimal in $\operatorname{End}_{\mathcal{C}_A}( \ydiagram{1}^{\otimes 2})$, but are now isomorphic. Let 
\[  \att{e12}{.5} \in \operatorname{End}_{\mathcal{C}_A}( \ydiagram{1}^{\otimes 2}) \]
be a unitary isomorphism $p_{\ydiagram{2}} \to p_{\ydiagram{1,1}}$, and define
\[  \att{e21}{.5} := \left(\att{e12}{.5}\right)^\dag \]
It follows that $\{e_{1,1} := p_{\ydiagram{2}}, e_{2,2} := p_{\ydiagram{1,1}}, e_{1,2},e_{2,1}\}$
is a system of matrix units for $\operatorname{End}_{\mathcal{C}_A}( \ydiagram{1}^{\otimes 2})$. In particular we have the following relations after translating to the braid basis $\left\{ \att{idid}{.5}, \att{YY}{.5}, \att{e12}{.5} , \att{e21}{.5}    \right\}$.
\begin{align*}
    \att{matRel3}{.5} &= \frac{1}{1+q^2}\left( \att{idid}{.5} + q\att{YY}{.5}  \right), \qquad \qquad \att{matRel4}{.5} = \frac{1}{1+q^{-2}}\left( \att{idid}{.5} -q^{-1}\att{YY}{.5}  \right),\\
   \att{Ye12}{.5}  &= q\att{e12}{.5},\qquad  \qquad \att{e12Y}{.5}  = -q^{-1}\att{e12}{.5},  \qquad  \qquad\att{matRel2}{.5} = 0= \att{matRel1}{.5}, \\
     \att{Ye21}{.5}  &= -q^{-1}\att{e21}{.5},\qquad \hspace{.2em} \att{e21Y}{.5}  = q\att{e21}{.5}.
\end{align*}
As $e_{1,2}$ and $e_{2,1}$ are nilpotent, and $\mathcal{C}_A$ is semisimple, these elements have trace zero (as the categorical trace on $\operatorname{End}_{\mathcal{C}_A}(\ydiagram{1}^{\otimes 2}) \cong M_2(\mathbb{C})$ is equal to the standard matrix trace up to a scalar). Hence we have the partial trace relations:
\[   \att{pTrace3}{.5} \quad = \att{pTrace4}{.5} = \att{pTrace1}{.5} = \att{pTrace2}{.5} = 0.\]
From Lemma~\ref{lem:OB} we also have the half-braid style relations
\[          \att{e12hb1}{.5} =   \att{e12hb2}{.5}  \qquad \text{ and } \qquad   \att{e21hb1}{.5} =   \att{e21hb2}{.5}.\]
Finally we have $U(1)$-gauge freedom for $e_{1,2}$ in the above relations, and hence are free to rescale $e_{1,2}$ by a unimodular scalar.

\subsection{The Convolution Product}\label{sub:conv}
To determine more complicated relations in $\mathcal{C}_A$ we will study the \textit{convolution product} on $\operatorname{End}_{\mathcal{C}_A}( \ydiagram{1}^{\otimes 2})$. This is defined as follows.
\begin{defn}
    For $f,g \in \operatorname{End}_{\mathcal{C}_A}( \ydiagram{1}^{\otimes 2})$, we define $f\bowtie g \in \operatorname{End}_{\mathcal{C}_A}( \ydiagram{1}^{\otimes 2})$ by
    \[  f\bowtie g:= \att{tilde}{.5}.  \]
\end{defn}
It follows that $\operatorname{End}_{\mathcal{C}_A}( \ydiagram{1}^{\otimes 2})$ is an associative algebra under the operation $\bowtie$. It turns out that the convolution product is closely related to the standard composition on the space $\operatorname{End}_{\mathcal{C}_A}( \ydiagram{1}\otimes \ydiagram{1}^*)$. To make this precise we define the isomorphism $\tau: \operatorname{End}_{\mathcal{C}_A}(\ydiagram{1}^{\otimes 2}) \to \operatorname{End}_{\mathcal{C}_A}(\ydiagram{1}\otimes \ydiagram{1}^*)$ by
\[  \tau\left( \att{fgen}{.5}\right) :=    \att{fgent}{.5}. \]
We then have the following.
\begin{lem}\label{lem:inter}
    We have
    \begin{align*} 
    \tau\left(\att{e12}{.5}\right)\circ \tau\left(\att{e21}{.5}\right) &= -i\cdot \tau\left(\att{tildee12e21br}{.5}\right) ,\\
    \tau\left(\att{e21}{.5}\right)\circ \tau\left(\att{e12}{.5}\right) &= -i\cdot \tau\left(\att{tildee21e12br}{.5}\right).
    \end{align*}
\end{lem}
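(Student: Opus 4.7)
\emph{Proof plan.} The two identities are a Fourier-transform principle: composition of the rotated matrix units in $\End_{\mathcal{C}_A}(\ydiagram{1}\otimes\ydiagram{1}^*)$ should agree, up to an $i$-factor coming from the braiding, with a braided convolution product on $\End_{\mathcal{C}_A}(\ydiagram{1}^{\otimes 2})$. My plan is to verify both identities by direct skein-theoretic computation, using the eigenvalue and partial-trace relations for $e_{12},e_{21}$ collected in Section~\ref{sec:inrel} together with the over-braid naturality of Lemma~\ref{lem:OB} and the Kazhdan--Wenzl curl relation.

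First I would expand $\tau(e_{12})\circ\tau(e_{21})$ as a diagram in $\End_{\mathcal{C}_A}(\ydiagram{1}\otimes\ydiagram{1}^*)$ by unfolding the definition of $\tau$: this produces a picture in which $e_{12}$ sits above $e_{21}$ and the ``inner'' pair of legs is joined by a cap--cup configuration coming from the two duality insertions. Next I would apply isotopy together with the over-braid naturality of Lemma~\ref{lem:OB} to slide this inner dual strand past $e_{12}$ and $e_{21}$; here it is essential that $e_{12}$ and $e_{21}$ are morphisms of $A$-modules, since in the ambient category $\mathcal{C}$ the braid is natural only on one side. The effect of this slide is to trade the cap--cup joining for a single braid crossing between the two matrix units, leaving a diagram that is manifestly $\tau$ applied to a braided convolution of $e_{12}$ and $e_{21}$, namely the picture denoted ``tildee12e21br''. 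The companion identity for $\tau(e_{21})\circ\tau(e_{12})$ follows by the mirror-image argument.

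The prefactor $-i$ is computed as follows. Any residual braid crossing that appears on top of or below the matrix units during the slide can be absorbed into them using the eigenvalue relations $Y\cdot e_{12} = q\,e_{12}$, $e_{12}\cdot Y = -q^{-1} e_{12}$, $Y\cdot e_{21} = -q^{-1} e_{21}$, $e_{21}\cdot Y = q\,e_{21}$; any closed loop or curl that survives is evaluated by the Kazhdan--Wenzl relation $\att{R1}{.15}=q^N\att{R12}{.15}$. Since we are in the $k=N$ regime, $q = e^{\pi i /(2N)}$, and so $q^{\pm N} = \pm i$; this is exactly where the $-i$ on the right-hand side of the lemma originates. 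The main obstacle will be bookkeeping --- correctly tracking the orientations of the braid crossings produced while passing the dual strand through $e_{12}$ and $e_{21}$, and choosing the correct eigenvalue relation at each stage so that the accumulated product of $q$-scalars collapses to exactly $-i$ and not $+i$ or another root of unity. The symmetry $q\leftrightarrow -q^{-1}$ exchanging the eigenvalues of $e_{12}$ and $e_{21}$ ensures that the two identities produce the same scalar, as asserted.
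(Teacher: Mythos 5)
Your plan matches the paper's proof: the paper likewise unfolds $\tau$, isotopes the resulting diagram, absorbs a braid crossing into one of the matrix units at a cost of $-q$, and evaluates the resulting curl via $q^N=i$ to produce the prefactor $(-q)\cdot i q^{-1}=-i$. The only ingredient you leave implicit is that this computation naturally lands on the convolution diagram with the \emph{inverse} crossing between $e_{1,2}$ and $e_{2,1}$, and the paper then uses the Hecke relation together with the vanishing partial traces of $e_{1,2}$ and $e_{2,1}$ to replace it by the positive crossing appearing in the statement --- precisely the crossing-orientation bookkeeping you flag as the main obstacle.
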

\begin{proof}
We compute $\tau(e_{1,2})\circ \tau(e_{2,1})$ as
\[   \att{ccc1}{.45} = \att{ccc2}{.45} =-q \att{ccc3}{.45} = (-q)i q^{-1}\att{ccc4}{.45} = -i\cdot \tau\left(\att{tildee12e21brinv}{.45}       \right).    \]
The first result then follows as
\[   \att{tildee12e21brinv}{.5} \quad = \quad \att{tildee12e21br}{.5}  \]
due to the Hecke relation and partial trace relations.

The proof of the second equality is near identical.
\end{proof}

This in particular implies the following commutativity of the $\bowtie$ operation. The more general result that $\bowtie$ is commutative in general is in fact true, but we will not need this stronger result.

\begin{cor}\label{cor:com}
    We have that
    \[   e_{1,2} \bowtie e_{2,1} =  e_{2,1} \bowtie e_{1,2}.   \]
\end{cor}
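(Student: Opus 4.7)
The plan is to reduce the claim to a commutativity statement about the ordinary composition of $\tau(e_{1,2})$ and $\tau(e_{2,1})$ in $\operatorname{End}_{\mathcal{C}_A}(\ydiagram{1}\otimes \ydiagram{1}^*)$, and then to prove that commutativity by showing the whole algebra is commutative.

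By Lemma~\ref{lem:inter} we have
\[
\tau(e_{1,2})\circ \tau(e_{2,1}) \;=\; -i\cdot \tau(e_{1,2}\bowtie e_{2,1}), \qquad
\tau(e_{2,1})\circ \tau(e_{1,2}) \;=\; -i\cdot \tau(e_{2,1}\bowtie e_{1,2}),
\]
up to the identification, used in the proof of Lemma~\ref{lem:inter}, between the ``$\operatorname{br}$'' and ``$\operatorname{brinv}$'' variants of the convolution diagrams via the Hecke and partial trace relations. Since $\tau$ is a vector space isomorphism, the desired equality $e_{1,2}\bowtie e_{2,1}=e_{2,1}\bowtie e_{1,2}$ is equivalent to $\tau(e_{1,2})\circ \tau(e_{2,1})=\tau(e_{2,1})\circ \tau(e_{1,2})$.

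To obtain this commutativity, we show that $\operatorname{End}_{\mathcal{C}_A}(\ydiagram{1}\otimes \ydiagram{1}^*)$ is a commutative algebra. First, $\tau$ gives a vector space isomorphism with $\operatorname{End}_{\mathcal{C}_A}(\ydiagram{1}^{\otimes 2})\cong M_2(\mathbb{C})$, so the target is $4$-dimensional. Next, since $\ydiagram{1}$ is simple in $\mathcal{C}_A$ by Lemma~\ref{lem:simp}, rigidity and Schur give
\[
\dim \operatorname{Hom}_{\mathcal{C}_A}(A \to \ydiagram{1}\otimes \ydiagram{1}^*)=\dim \operatorname{End}_{\mathcal{C}_A}(\ydiagram{1}) = 1,
\]
so the unit $A$ occurs exactly once in the decomposition of $\ydiagram{1}\otimes \ydiagram{1}^*$ into simples of $\mathcal{C}_A$. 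The remaining simple summands have multiplicities $n_i$ with $\sum n_i^2 = 3$. The only way to write $3$ as a sum of positive squares is $1+1+1$, so $\ydiagram{1}\otimes \ydiagram{1}^*$ is multiplicity-free, with four distinct simple summands. Consequently $\operatorname{End}_{\mathcal{C}_A}(\ydiagram{1}\otimes \ydiagram{1}^*)\cong \mathbb{C}^{4}$, which is commutative, and the corollary follows.

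There is no serious obstacle here: the heavy lifting is done by Lemma~\ref{lem:inter}, and the remaining step is the standard observation that $4 = 1+1+1+1$ is forced once the unit multiplicity is pinned down. The only point requiring care is that the diagrams labelled $\operatorname{tildee12e21br}$ and $\operatorname{tildee21e12br}$ in Lemma~\ref{lem:inter} really do represent the convolutions $e_{1,2}\bowtie e_{2,1}$ and $e_{2,1}\bowtie e_{1,2}$, but this identification is exactly what the Hecke-style simplification at the end of the proof of Lemma~\ref{lem:inter} provides.
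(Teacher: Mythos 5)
Your proposal is correct and follows essentially the same route as the paper: invoke Lemma~\ref{lem:inter} to transport the question to ordinary composition in $\operatorname{End}_{\mathcal{C}_A}(\ydiagram{1}\otimes\ydiagram{1}^*)$, then observe that this $4$-dimensional semisimple algebra contains a $\mathbb{C}$-factor coming from the unit summand and is therefore forced to be $\mathbb{C}^4$, hence commutative. Your explicit $\sum n_i^2=3=1+1+1$ count just spells out the step the paper leaves implicit, so there is nothing to add.
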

\begin{proof}
    As $ \operatorname{End}_{\mathcal{C}_A}(\ydiagram{1}^{\otimes 2})$ is 4-dimensional, we also have that $\operatorname{End}_{\mathcal{C}_A}(\ydiagram{1}\otimes \ydiagram{1}^*)$ is 4-dimensional. We know $\mathbf{1}$ is a rank 1 summand of $\ydiagram{1}\otimes \ydiagram{1}^*$. Hence $\operatorname{End}_{\mathcal{C}_A}(\ydiagram{1}\otimes \ydiagram{1}^*)$ contains a subalgebra isomorphic to $\mathbb{C}$. It follows that $\operatorname{End}_{\mathcal{C}_A}(\ydiagram{1}\otimes \ydiagram{1}^*)$ is isomorphic to $\mathbb{C}^4$ and hence commutative. In particular
    \[   \tau\left(\att{e12}{.5}\right)\circ \tau\left(\att{e21}{.5}\right)=   \tau\left(\att{e21}{.5}\right)\circ \tau\left(\att{e12}{.5}\right)  \]
    and the claim follows from Lemma~\ref{lem:inter}.
\end{proof}

As 
$\operatorname{End}_{\mathcal{C}_A}( \ydiagram{1}^{\otimes 2})$ has the basis
\[   \left\{   \att{idid}{.5},\quad \att{YY}{.5}  ,\quad \att{e12}{.5},\quad \att{e21}{.5} \right\}   \]
we have structure constants of $\bowtie$ with respect to this basis. Let $\alpha, \beta, \gamma, \lambda, \delta, \epsilon, \omega, \kappa\in \mathbb{C}$ be the structure constants:
\begin{align*}
    e_{1,2}\bowtie e_{1,2}  &=  \alpha \att{idid}{.5}  +\beta \att{YY}{.5}   + \gamma \att{e12}{.5} + \lambda \att{e21}{.5}, \\
    e_{1,2}\bowtie e_{2,1}  &=  \delta \att{idid}{.5}  +\epsilon \att{YY}{.5}   + \omega \att{e12}{.5} + \kappa\att{e21}{.5} .
\end{align*}
Taking daggers then gives
\begin{align*}
    e_{2,1}\bowtie e_{2,1}  &=  (\overline{\alpha} - (q-q^{-1}) \overline{\beta}) \att{idid}{.5}  +\overline{\beta} \att{YY}{.5}   + \overline{\lambda} \att{e12}{.5} + \overline{\gamma}\att{e21}{.5}, \\
    e_{2,1}\bowtie e_{1,2}  &=  (\overline{\delta} - (q-q^{-1}) \overline{\epsilon}) \att{idid}{.5}  +\overline{\epsilon} \att{YY}{.5}   + \overline{\kappa} \att{e12}{.5} + \overline{\omega}\att{e21}{.5} .
\end{align*}
In particular from Corollary~\ref{cor:com} we see that $\kappa = \overline{\omega}$.

We use up the gauge freedom in $e_{1,2}$ to arrange that $\lambda \geq 0$.


\begin{lem}\label{lem:conv}
We have that
\[    \alpha = \beta = 0 ,\qquad \delta =  - \frac{q-q^{-1}}{(q+q^{-1})^2} \qquad \epsilon =  \frac{2}{(q+q^{-1})^2} \qquad \omega = \overline{\gamma}, \quad \text{ and } \quad \lambda^2 = \frac{2}{(q+q^{-1})^2} + |\gamma|^2.\]
\end{lem}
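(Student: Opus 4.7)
My plan is to determine the seven free structure constants ($\alpha$, $\beta$, $\gamma$, $\lambda$, $\delta$, $\epsilon$, together with the relation $\omega = \bar\gamma$) in three stages: partial-trace extraction, a dagger/commutativity argument, and an associativity comparison.

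For the first stage, I would apply a partial trace on, say, the right strand to both sides of the defining expansions of $e_{1,2} \bowtie e_{1,2}$ and $e_{1,2} \bowtie e_{2,1}$. By the trace-vanishing relations of Subsection~\ref{sec:inrel}, the contributions of $\gamma e_{1,2}, \lambda e_{2,1}, \omega e_{1,2}, \kappa e_{2,1}$ on the right-hand side disappear, leaving only the coefficients of $\mathrm{id}$ and $Y$, whose right-partial-traces are explicit scalar multiples of the identity on a single strand. The corresponding partial trace of the left-hand side can be evaluated diagrammatically by sliding one $e$-box past the other using the half-braid relations from Subsection~\ref{sec:inrel}, and then using trace-vanishing to cancel. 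For $e_{1,2} \bowtie e_{1,2}$ I expect the result to vanish identically, forcing $\alpha = \beta = 0$. For $e_{1,2} \bowtie e_{2,1}$, the diagram should reduce to a weighted crossing which, after expanding with the explicit formulas for $p_{\ydiagram{2}}$ and $p_{\ydiagram{1,1}}$ recalled in Subsection~\ref{sub:sl}, produces the stated values of $\delta$ and $\epsilon$.

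For the second stage, Lemma~\ref{lem:inter} transports $\bowtie$ (up to the factor $-i$) to composition in the commutative semisimple algebra $\operatorname{End}_{\mathcal{C}_A}(\ydiagram{1}\otimes\ydiagram{1}^*)$, and $\tau(e_{2,1}) = \tau(e_{1,2})^\dagger$. Commutativity forces $\tau(e_{1,2})\circ \tau(e_{2,1})$ to be self-adjoint. Pulling back through $\tau$ and matching the coefficients of $e_{1,2}$ and $e_{2,1}$ in the expansion of $e_{1,2}\bowtie e_{2,1}$ gives $\omega = \bar\gamma$ (and hence also $\kappa = \gamma$). For the third stage, I would compute $(e_{1,2}\bowtie e_{1,2})\bowtie(e_{2,1}\bowtie e_{2,1})$ two ways. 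Directly, using $e_{1,2}\bowtie e_{1,2} = \gamma e_{1,2} + \lambda e_{2,1}$ (with $\lambda\in \mathbb{R}_{\geq 0}$) and its dagger, gives an expression in $e_{1,2}\bowtie e_{2,1}$ and $e_{2,1}\bowtie e_{1,2}$ whose identity-component is determined by $\delta, \epsilon, \gamma, \lambda$. By associativity and commutativity of $\bowtie$ this equals $(e_{1,2}\bowtie e_{2,1})^{\bowtie 2}$, whose identity-component is also determined by the already-known expansion. Matching the two expressions produces the stated identity $\lambda^2 = \frac{2}{(q+q^{-1})^2} + |\gamma|^2$.

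The main obstacle is the diagrammatic reduction in the first stage: one must carefully track the $q$-weights picked up from the Hecke relation together with $Y\cdot e_{1,2} = q\,e_{1,2}$ and $e_{1,2}\cdot Y = -q^{-1}e_{1,2}$, while using the half-braid relations to untangle two copies of $e_{1,2}$ inside a convolution diagram and close the resulting strands. A clean bookkeeping step would be to first rewrite the convolution products in the matrix-unit basis $\{p_{\ydiagram{2}}, p_{\ydiagram{1,1}}, e_{1,2}, e_{2,1}\}$ so that orthogonality of the minimal projections eliminates most cross-terms before the partial trace is applied.
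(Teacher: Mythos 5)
Your overall strategy (partial traces to fix the $\operatorname{id}$ and braid coefficients, then dagger/commutativity, then associativity) is the same as the paper's, but two of your steps have genuine gaps. First, in stage 1 a single right partial trace gives only \emph{one} scalar equation per product, which cannot determine the two unknowns $\alpha,\beta$ (resp.\ $\delta,\epsilon$); moreover the left and right partial traces agree as linear functionals on the span of $\operatorname{id}$ and the braid, so using both sides does not help. The paper instead evaluates \emph{two} independent closures of each convolution diagram --- the plain partial trace and the partial trace with a crossing inserted --- obtaining the $2\times 2$ linear systems
\[
\alpha \tfrac{2i}{q-q^{-1}} + \beta i = 0,\qquad \alpha i + \beta\bigl(\tfrac{2i}{q-q^{-1}} + i(q-q^{-1})\bigr) = 0
\]
and the analogous inhomogeneous system for $\delta,\epsilon$. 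You need that second, independent closure for your stage 1 to go through.

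Second, and more seriously, your stage 2 does not prove $\omega=\overline{\gamma}$. Commutativity $e_{1,2}\bowtie e_{2,1}=e_{2,1}\bowtie e_{1,2}$ together with $(e_{1,2}\bowtie e_{2,1})^\dagger=e_{2,1}\bowtie e_{1,2}$ says only that $e_{1,2}\bowtie e_{2,1}$ is self-adjoint, which upon comparing the $e_{1,2}$- and $e_{2,1}$-coefficients yields $\kappa=\overline{\omega}$ --- exactly the relation the paper records \emph{before} the lemma. It cannot yield $\omega=\overline{\gamma}$, since that identity relates coefficients of two \emph{different} products ($e_{1,2}\bowtie e_{2,1}$ and $e_{1,2}\bowtie e_{1,2}$). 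The paper obtains both $\omega=\overline{\gamma}$ and $\lambda^2=\tfrac{2}{(q+q^{-1})^2}+|\gamma|^2$ in one stroke by expanding the single triple product $(e_{1,2}\bowtie e_{1,2})\bowtie e_{2,1}=e_{1,2}\bowtie(e_{1,2}\bowtie e_{2,1})$ and matching the $e_{1,2}$- and $e_{2,1}$-coefficients. Your stage 3 fourth-order product is likely salvageable (the needed rearrangement reduces to the proven special case of commutativity), but as written it extracts only the identity component, which gives one equation still involving the unknown $\omega$; without an independent proof of $\omega=\overline{\gamma}$ you cannot isolate $\lambda^2$. Replacing stages 2 and 3 by the paper's triple-product associativity comparison closes both gaps.
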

\begin{proof}
We first compute 
\[\att{tildeLtr}{.5} = 0 \qquad \text{and}\qquad \att{tildeLtr2}{.5} = 0.\]
This gives the equations
\[    \alpha \frac{2i}{q-q^{-1}} + \beta i = 0 \qquad \text{and}\qquad \alpha i  + \beta\left( \frac{2i}{q-q^{-1}} + i(q-q^{-1})  \right)=0.\]
For $q\neq \pm i$ this linear system has the unique solution $\alpha = \beta = 0$.

In the same fashion we compute 
\[\att{tildeLtr3}{.5} = 0 \qquad \text{and}\qquad \att{tildeLtr4}{.5} = \frac{i}{q-q^{-1}}.\]
This gives the equations
\[    \delta \frac{2i}{q-q^{-1}} + \epsilon i = 0 \qquad \text{and}\qquad \delta i  + \epsilon\left( \frac{2i}{q-q^{-1}} + i(q-  q^{-1})  \right)= \frac{i}{q-q^{-1}}\]
which has the unique solution $\delta =  - \frac{q-q^{-1}}{(q+q^{-1})^2}$ and $\epsilon=  \frac{2}{(q+q^{-1})^2}$ if $q \neq \pm i$.

We now use associativity of $\bowtie$ to compute
\begin{align*}( e_{1,2}\bowtie e_{1,2}) \bowtie e_{2,1} &= \gamma \cdot e_{1,2} \bowtie e_{2,1}  + \lambda \cdot e_{2,1} \bowtie e_{2,1}\\
&= \gamma(\delta \att{idid}{.4}  + \epsilon  \att{YY}{.4} + \omega e_{1,2} + \overline{\omega} e_{2,1}) + \lambda( \lambda e_{1,2} + \overline{\gamma} e_{2,1})\\
&= -\gamma  \frac{q-q^{-1}}{(q+q^{-1})^2}\att{idid}{.4}  + \gamma \frac{2}{(q+q^{-1})^2}\att{YY}{.4} + (\gamma \omega + \lambda^2)e_{1,2} + (\gamma \overline{\omega} + \lambda \overline{\gamma})e_{2,1},
\end{align*}
and
\begin{align*}
    e_{1,2}\bowtie ( e_{1,2}\bowtie e_{2,1})   &= \delta \cdot e_{1,2} \bowtie \att{idid}{.4}  + \epsilon \cdot e_{1,2} \bowtie \att{YY}{.4} +\omega \cdot e_{1,2} \bowtie e_{1,2} +\overline{\omega} \cdot e_{1,2} \bowtie e_{2,1}\\
    &=\epsilon \cdot e_{1,2} + \omega(   \gamma e_{1,2} + \lambda e_{2,1}       ) + \overline{\omega}(\delta \att{idid}{.4}  + \epsilon  \att{YY}{.4} + \omega e_{1,2} + \overline{\omega} e_{2,1})\\
    &= \overline{\omega} \frac{q^{-1}-q}{(q+q^{-1})^2} \att{idid}{.4} + \frac{2\cdot  \overline{\omega} }{(q+q^{-1})^2}\att{YY}{.4} + \left(\frac{2}{(q+q^{-1})^2}  + \omega (\gamma + \overline{\omega})\right)e_{1,2} + (\omega \lambda  + \overline{\omega}^2)e_{2,1}.
\end{align*}
This yields
\begin{align*}
    \omega &= \overline{\gamma}\\
     \lambda^2 &= \frac{2}{(q+q^{-1})^2}  +|\gamma|^2.
\end{align*}


\end{proof}

\subsection{A basis for \texorpdfstring{$\operatorname{End}_{\mathcal{C}_A}( \ydiagram{1}^{\otimes 3})$}{the three box space}}

In order to pin down the free coefficient $\gamma$ we will have to study the algebra $\operatorname{End}_{\mathcal{C}_A}( \ydiagram{1}^{\otimes 3})$. Recall from Theorem~\ref{thm:graph2} that this algebra is 24-dimensional. We begin by finding a basis for this algebra.

\begin{defn}\label{def:knBas}
    We define
    \begin{align*} S:= \{&  \att{c1}{.5} ,\att{c2}{.5} ,\att{c3}{.5} ,\att{c4}{.5} ,\att{c5}{.5} ,\att{c6}{.5} ,\att{c7a}{.5} ,\att{c7b}{.5}  ,\att{c8a}{.5} ,  \\
    & \att{c8b}{.5} ,\att{c9a}{.5}  ,\att{c9b}{.5}  ,\att{c10a}{.5}  ,\att{c10b}{.5}  ,\att{c11a}{.5}  ,\att{c11b}{.5}  ,\att{c12a}{.5}  ,\\
    & \att{c12b}{.5},\att{c13a}{.5}  ,\att{c13b}{.5}  ,\att{c14a}{.5}  ,\att{c14b}{.5} ,\att{c15a}{.5}  ,\att{c15b}{.5} \} \subset \operatorname{End}_{\mathcal{C}_A}(\ydiagram{1}^{\otimes 3}).
    \end{align*}
\end{defn}
It can be helpful to think of $S$ as the ``braided annular consequences'' of $e_{1,2}$ and $e_{2,1}$. Somewhat surprisingly we can compute the inner-product between any two elements of $S$ just using the relations of Subsection~\ref{sec:inrel}. This allows us to show that $S$ is a basis. 
\begin{lem}
    We have that $S$ is a basis for $ \operatorname{End}_{\mathcal{C}_A}(\ydiagram{1}^{\otimes 3})$ for $N\geq 4$.
\end{lem}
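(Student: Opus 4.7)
Since Theorem~\ref{thm:graph2} gives $\dim\operatorname{End}_{\mathcal{C}_A}(\ydiagram{1}^{\otimes 3}) = 24 = |S|$, it suffices to show that $S$ is linearly independent. Because $\mathcal{C}_A$ is semisimple and unitary, the sesquilinear form $\langle f, g \rangle := \operatorname{tr}(f^\dagger g)$ coming from the categorical trace is positive definite on $\operatorname{End}_{\mathcal{C}_A}(\ydiagram{1}^{\otimes 3})$, and in particular non-degenerate. The plan is therefore to compute the $24 \times 24$ Gram matrix $G_{ij} = \langle s_i, s_j \rangle$ and show it is invertible; each entry is obtained by stacking $s_i^\dagger$ above $s_j$, closing the three strands to obtain a scalar, and evaluating that scalar purely by the relations gathered in Subsection~\ref{sec:inrel}.

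The elements of $S$ split naturally by the number of insertions of the nilpotent matrix units $e_{1,2}$ and $e_{2,1}$: six ``Hecke-type'' diagrams involving only braids, and eighteen diagrams containing one or two $e$-insertions (organised in the nine pairs $s_{ja}, s_{jb}$ for $j = 7, \dots, 15$). The Hecke--Hecke block of $G$ is the usual Markov-trace pairing on $H_3(q)$ restricted to a subspace, which is well known to be non-degenerate. For a Hecke-type diagram paired with one containing an $e$-insertion, the half-braid relations of Subsection~\ref{sec:inrel} (consequences of Lemma~\ref{lem:OB}) can be used to migrate the $e$-insertion to a position where a single strand is partial-traceable; the partial-trace relations $\operatorname{tr}_R(e_{1,2}) = 0 = \operatorname{tr}_R(e_{2,1})$ then force these pairings to vanish, so the Hecke block decouples from the rest. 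For pairings between two $e$-insertion diagrams, the same half-braid moves together with the $Y$-absorption identities $Y e_{1,2} = q e_{1,2}$, $e_{1,2} Y = -q^{-1} e_{1,2}$, etc.\ bring the two $e$'s into vertical adjacency, at which point the matrix-unit identities $e_{1,2} e_{2,1} = e_{1,1}$, $e_{2,1} e_{1,2} = e_{2,2}$, $e_{1,2}^2 = 0 = e_{2,1}^2$ collapse them. What remains is a closed three-strand diagram involving only braids and at most one of the projectors $p_{\ydiagram{2}}$ or $p_{\ydiagram{1,1}}$, which evaluates via a Hecke trace together with the known quantum dimensions; crucially, no reference to the unknown coefficient $\gamma$ of Lemma~\ref{lem:conv} is required, matching the claim that only Subsection~\ref{sec:inrel} is needed.

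The main obstacle is bookkeeping across the twenty-four diagrams. In practice one should refine the above decomposition by the horizontal positions of the $e$-insertions so that $G$ becomes block-diagonal with blocks of small size, each of whose determinant can be written as an explicit rational function in $q = e^{2\pi i/4N}$. The hypothesis $N \geq 4$ is then exactly what is needed to guarantee that the relevant $q$-integers appearing in these determinants are non-zero, so that each diagonal block is non-degenerate and $S$ is therefore a basis.
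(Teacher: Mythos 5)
Your proposal follows essentially the same route as the paper: the authors likewise compute the $24\times 24$ Gram matrix of $S$ under the trace pairing using only the relations of Subsection~\ref{sec:inrel}, obtain exactly the block structure you predict (the six Hecke-type diagrams decouple from the eighteen $e$-insertion diagrams, and no entry involves $\gamma$), and conclude by checking that the explicit determinant is nonzero precisely when $q = e^{2\pi i/4N}$ avoids the $4$th, $8$th and $12$th roots of unity, i.e.\ for $N\geq 4$, before invoking $\dim\End_{\mathcal{C}_A}(\ydiagram{1}^{\otimes 3})=24$ from Theorem~\ref{thm:graph2}. The only difference is that the paper carries out the computation explicitly and exhibits the matrices and determinant, whereas your write-up leaves that bookkeeping as a plan.
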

\begin{proof}
    We compute the matrix of inner products for $S$ as:
    \[ \frac{1}{(q-q^{-1})^2}\begin{bmatrix}
        A & 0 & 0 \\
        0 & B & C \\
        0 & C^\dag & D
    \end{bmatrix}\]
    where 
    \begin{align*}
        A &= \begin{bmatrix}
 -\frac{8 i q}{q^2-1} & -4 i & -4 i & -\frac{2 i \left(q^2-1\right)}{q} & -\frac{2 i \left(q^2-1\right)}{q} & -\frac{2 i \left(q^4+1\right)}{q^2} \\
 4 i & -\frac{8 i q}{q^2-1} & \frac{2 i \left(q^2-1\right)}{q} & -4 i & -4 i & -\frac{2 i \left(q^2-1\right)}{q} \\
 4 i & \frac{2 i \left(q^2-1\right)}{q} & -\frac{8 i q}{q^2-1} & -4 i & -4 i & -\frac{2 i \left(q^2-1\right)}{q} \\
 -\frac{2 i \left(q^2-1\right)}{q} & 4 i & 4 i & -\frac{8 i q}{q^2-1} & \frac{2 i \left(q^2-1\right)}{q} & -4 i \\
 -\frac{2 i \left(q^2-1\right)}{q} & 4 i & 4 i & \frac{2 i \left(q^2-1\right)}{q} & -\frac{8 i q}{q^2-1} & -4 i \\
 \frac{2 i \left(q^4+1\right)}{q^2} & -\frac{2 i \left(q^2-1\right)}{q} & -\frac{2 i \left(q^2-1\right)}{q} & 4 i & 4 i & -\frac{8 i q}{q^2-1} \\
\end{bmatrix}\\
B& = \begin{bmatrix}
 -\frac{4 i q}{q^2-1} & 0 & 0 & 0 & -2 i & 0 & 0 & 0 & 0 & 0 \\
 0 & -\frac{4 i q}{q^2-1} & 0 & 0 & 0 & -2 i & 0 & 0 & 0 & 0 \\
 0 & 0 & -\frac{4 i q}{q^2-1} & 0 & 0 & 0 & -2 i & 0 & -2 i & 0 \\
 0 & 0 & 0 & -\frac{4 i q}{q^2-1} & 0 & 0 & 0 & -2 i & 0 & -2 i \\
 2 i & 0 & 0 & 0 & -\frac{4 i q}{q^2-1} & 0 & -2 i q & 0 & 0 & 0 \\
 0 & 2 i & 0 & 0 & 0 & -\frac{4 i q}{q^2-1} & 0 & \frac{2 i}{q} & 0 & 0 \\
 0 & 0 & 2 i & 0 & \frac{2 i}{q} & 0 & -\frac{4 i q}{q^2-1} & 0 & 0 & 0 \\
 0 & 0 & 0 & 2 i & 0 & -2 i q & 0 & -\frac{4 i q}{q^2-1} & 0 & 0 \\
 0 & 0 & 2 i & 0 & 0 & 0 & 0 & 0 & -\frac{4 i q}{q^2-1} & 0 \\
 0 & 0 & 0 & 2 i & 0 & 0 & 0 & 0 & 0 & -\frac{4 i q}{q^2-1} \\
\end{bmatrix}\\
C&=
\begin{bmatrix}
 -2 i & 0 & 0 & 0 & -2 i q & 0 & -2 i q & 0 \\
 0 & -2 i & 0 & 0 & 0 & \frac{2 i}{q} & 0 & \frac{2 i}{q} \\
 0 & 0 & 0 & 0 & \frac{2 i}{q} & 0 & \frac{2 i}{q} & 0 \\
 0 & 0 & 0 & 0 & 0 & -2 i q & 0 & -2 i q \\
 0 & 0 & -2 i & 0 & 2 i q^2 & 0 & 0 & 0 \\
 0 & 0 & 0 & -2 i & 0 & \frac{2 i}{q^2} & 0 & 0 \\
 0 & 0 & 2 i & 0 & \frac{2 i}{q^2}-2 i & 0 & -2 i & 0 \\
 0 & 0 & 0 & 2 i & 0 & 2 i \left(q^2-1\right) & 0 & -2 i \\
 \frac{2 i}{q} & 0 & -2 i & 0 & \frac{2 i}{q^2} & 0 & 0 & 0 \\
 0 & -2 i q & 0 & -2 i & 0 & 2 i q^2 & 0 & 0 \\ 
\end{bmatrix}
\\
D&=\begin{bmatrix}
 -\frac{4 i q}{q^2-1} & 0 & 2 i & 0 & 2 i \left(q^2-1\right) & 0 & -2 i & 0 \\
 0 & -\frac{4 i q}{q^2-1} & 0 & 2 i & 0 & \frac{2 i}{q^2}-2 i & 0 & -2 i \\
 -2 i & 0 & -\frac{4 i q}{q^2-1} & 0 & -\frac{2 i \left(q^2-1\right)}{q} & 0 & -\frac{2 i \left(q^2-1\right)}{q} & 0 \\
 0 & -2 i & 0 & -\frac{4 i q}{q^2-1} & 0 & -\frac{2 i \left(q^2-1\right)}{q} & 0 & -\frac{2 i \left(q^2-1\right)}{q} \\
 2 i-\frac{2 i}{q^2} & 0 & -\frac{2 i \left(q^2-1\right)}{q} & 0 & -\frac{4 i q}{q^2-1} & 0 & \frac{2 i \left(q^2-1\right)}{q} & 0 \\
 0 & -2 i \left(q^2-1\right) & 0 & -\frac{2 i \left(q^2-1\right)}{q} & 0 & -\frac{4 i q}{q^2-1} & 0 & \frac{2 i \left(q^2-1\right)}{q} \\
 2 i & 0 & -\frac{2 i \left(q^2-1\right)}{q} & 0 & \frac{2 i \left(q^2-1\right)}{q} & 0 & -\frac{4 i q}{q^2-1} & 0 \\
 0 & 2 i & 0 & -\frac{2 i \left(q^2-1\right)}{q} & 0 & \frac{2 i \left(q^2-1\right)}{q} & 0 & -\frac{4 i q}{q^2-1} \\
\end{bmatrix}
    \end{align*}
    The determinant of this matrix is
    \[  17179869184 \frac{q^{34} \left(q^2+1\right)^{18} \left(q^4+1\right)^2 \left(q^4-q^2+1\right)^8}{(q-1)^{72} (q+1)^{72}}  \]
    and so the matrix is non-singular for $q\not\in \{0,  i, -i, \zeta_8 , \zeta_8^3, \zeta_8^5, \zeta^7_8, \zeta_{12},\zeta_{12}^5, \zeta_{12}^7, \zeta_{12}^{11}\}$ where $\zeta_\ell = e^{2\pi i \frac{1}{\ell}}$. In particular $S$ is linearly independent for $N \geq 4$. By Theorem~\ref{thm:graph2} we have that for $\operatorname{dimEnd}_{\mathcal{C}_A}(\ydiagram{1}^{\otimes 3}) = 24$, and it follows that $S$ is a basis.
\end{proof}
 With the basis $S$ in hand we can now compute more complicated 3-strand relations in $\mathcal{C}_A$. These will be in terms of the convolution product structure constants.
\begin{lem}
    We have the following relations:
\[  \begin{blockarray}{cccc}
\att{doubleC}{.5}&\att{doubleG}{.5}&\att{doubleE}{.5}&\att{doubleJ}{.5} \\
\begin{block}{[cccc]}
 0 & \frac{2}{\left(q^2+1\right)^2}-\frac{1}{q^4+1} & \frac{q^4 \left(q^2-1\right)^2}{\left(q^2+1\right)^2 \left(q^4+1\right)} & 0 \\
 0 & q \left(\frac{2}{\left(q^2+1\right)^2}-\frac{1}{q^4+1}\right) & -\frac{q^3 \left(q^2-1\right)^2}{\left(q^2+1\right)^2 \left(q^4+1\right)} & 0 \\
 0 & q \left(\frac{2}{\left(q^2+1\right)^2}-\frac{1}{q^4+1}\right) & -\frac{q^3 \left(q^2-1\right)^2}{\left(q^2+1\right)^2 \left(q^4+1\right)} & 0 \\
 0 & \frac{1-q^2}{q^6+q^4+q^2+1} & \frac{q^4 \left(q^2-1\right)}{q^6+q^4+q^2+1} & 0 \\
 0 & \frac{q^2 \left(q^2-1\right)^2}{\left(q^2+1\right)^2 \left(q^4+1\right)} & \frac{q^2 \left(q^2-1\right)^2}{\left(q^2+1\right)^2 \left(q^4+1\right)} & 0 \\
 0 & \frac{q-q^3}{q^6+q^4+q^2+1} & q \left(\frac{1}{q^4+1}-\frac{1}{q^2+1}\right) & 0 \\
 -\frac{\gamma  q \left(q^2-1\right)^2}{2 \left(q^4-q^2+1\right)} & \frac{\left(q^2-1\right)^2 \overline{\gamma} }{2 \left(q^5-q^3+q\right)} & 0 & 0 \\
 0 & 0 & -\frac{q \left(q^2-1\right)^2 \overline{\gamma} }{2 \left(q^4-q^2+1\right)} & \frac{\left(q^2-1\right)^2 \overline{\gamma}}{2 \left(q^5-q^3+q\right)} \\
 \frac{\gamma  \left(q^2-1\right)^2}{2 \left(q^5-q^3+q\right)} & 0 & -\frac{q \left(q^2-1\right)^2 \gamma}{2 \left(q^4-q^2+1\right)} & 0 \\
 0 & \frac{\left(q^2-1\right)^2 \gamma}{2 \left(q^5-q^3+q\right)} & 0 & -\frac{q \left(q^2-1\right)^2 \overline{\gamma}}{2 \left(q^4-q^2+1\right)} \\
 \frac{\gamma  \left(q^2-1\right)^2}{2 \left(q^4-q^2+1\right)} & \frac{\left(q^2-1\right)^2 \overline{\gamma} }{2 \left(q^4-q^2+1\right)} & 0 & 0 \\
 0 & 0 & \frac{\left(q^2-1\right)^2 \overline{\gamma} }{2 \left(q^4-q^2+1\right)} & \frac{\left(q^2-1\right)^2 \overline{\gamma}}{2 \left(q^4-q^2+1\right)} \\
 -\frac{\gamma  q^2 \left(q^2-1\right)}{2 \left(q^4-q^2+1\right)} & \frac{\overline{\gamma} -q^2 \overline{\gamma} }{2 q^4-2 q^2+2} & \frac{q^2 \left(q^2-1\right) \gamma}{2 \left(q^4-q^2+1\right)} & \frac{\left(q^2-1\right) \lambda}{2 q^2} \\
 -\frac{1}{2} \lambda  (q-1) (q+1) & -\frac{\left(q^2-1\right) \gamma}{2 \left(q^4-q^2+1\right)} & \frac{q^2 \left(q^2-1\right) \overline{\gamma} }{2 \left(q^4-q^2+1\right)} & \frac{\left(q^2-1\right) \overline{\gamma}}{2 \left(q^4-q^2+1\right)} \\
 \frac{\gamma  \left(q^2-1\right)^2}{2 \left(q^4-q^2+1\right)} & 0 & \frac{\left(q^2-1\right)^2 \gamma}{2 \left(q^4-q^2+1\right)} & 0 \\
 0 & \frac{\left(q^2-1\right)^2 \gamma}{2 \left(q^4-q^2+1\right)} & 0 & \frac{\left(q^2-1\right)^2 \overline{\gamma}}{2 \left(q^4-q^2+1\right)} \\
 \frac{\gamma  \left(q^2-1\right)}{2 \left(q^4-q^2+1\right)} & \frac{\overline{\gamma} -q^2 \overline{\gamma} }{2 q^4-2 q^2+2} & \frac{q^2 \left(q^2-1\right) \gamma}{2 \left(q^4-q^2+1\right)} & -\frac{1}{2} \left(q^2-1\right) \lambda \\
 \frac{\lambda  \left(q^2-1\right)}{2 q^2} & -\frac{\left(q^2-1\right) \gamma}{2 \left(q^4-q^2+1\right)} & \frac{q^2 \left(q^2-1\right) \overline{\gamma} }{2 \left(q^4-q^2+1\right)} & -\frac{q^2 \left(q^2-1\right) \overline{\gamma}}{2 \left(q^4-q^2+1\right)} \\
 -\frac{\gamma  q \left(q^2-1\right)}{2 \left(q^4-q^2+1\right)} & -\frac{\left(q^2-1\right) \overline{\gamma} }{2 \left(q^5-q^3+q\right)} & -\frac{q^3 \left(q^2-1\right) \gamma}{2 \left(q^4-q^2+1\right)} & -\frac{\left(q^2-1\right) \lambda}{2 q} \\
 -\frac{\lambda  \left(q^2-1\right)}{2 q} & -\frac{\left(q^2-1\right) \gamma}{2 \left(q^5-q^3+q\right)} & -\frac{q^3 \left(q^2-1\right) \overline{\gamma} }{2 \left(q^4-q^2+1\right)} & -\frac{q \left(q^2-1\right) \overline{\gamma}}{2 \left(q^4-q^2+1\right)} \\
 0 & 0 & 0 & 0 \\
 0 & 0 & 0 & 0 \\
 \frac{\gamma  q \left(q^2-1\right)}{2 \left(q^4-q^2+1\right)} & -\frac{q \left(q^2-1\right) \overline{\gamma} }{2 \left(q^4-q^2+1\right)} & -\frac{q \left(q^2-1\right) \gamma}{2 \left(q^4-q^2+1\right)} & \frac{\left(q^2-1\right) \lambda}{2 q} \\
 \frac{\lambda  \left(q^2-1\right)}{2 q} & -\frac{q \left(q^2-1\right) \gamma}{2 \left(q^4-q^2+1\right)} & -\frac{q \left(q^2-1\right) \overline{\gamma} }{2 \left(q^4-q^2+1\right)} & \frac{q \left(q^2-1\right) \overline{\gamma}}{2 \left(q^4-q^2+1\right)} \\
\end{block}
\end{blockarray} \]
Here the columns are indexed by the elements of the basis $S$.
\end{lem}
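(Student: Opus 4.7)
The plan is to expand each of the four target diagrams $D \in \{\text{doubleC}, \text{doubleG}, \text{doubleE}, \text{doubleJ}\}$ as a linear combination $D = \sum_{s\in S} c_s^D\, s$ in the basis $S$ for $\operatorname{End}_{\mathcal{C}_A}(\ydiagram{1}^{\otimes 3})$ just established. Since the Gram matrix for the categorical trace pairing on $S$ was computed in the preceding lemma and shown to be invertible for $N \geq 4$, the coefficients $c_s^D$ are uniquely determined by the inner products $\langle D, s\rangle$. I would therefore compute $\langle D, s\rangle$ for each of the four $D$ and all $s\in S$, assemble the resulting $24 \times 4$ matrix of inner products, and left-multiply by the inverse Gram matrix to recover the displayed coefficient matrix.

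Each inner product would be evaluated diagrammatically by closing $D\cdot s^{\dagger}$ into a morphism in $\operatorname{End}_{\mathcal{C}_A}(\mathbf{1}) \cong \mathbb{C}$ and reducing using only the relations established so far. Concretely, the tools available are: the Kazhdan--Wenzl Hecke relations on the braid generator, the $R$-matrix action of the braid on $e_{1,2}, e_{2,1}$ from Subsection~\ref{sec:inrel} (whereby the braid absorbs into a matrix unit at a cost of $q$ or $-q^{-1}$), the half-braid commutation relations guaranteed by Lemma~\ref{lem:OB}, and the vanishing partial-trace identities for the matrix units. Whenever two matrix units meet through a cap-and-cup configuration the result is exactly a convolution, and an application of Lemma~\ref{lem:conv} replaces them by the sum involving $\delta, \epsilon, \omega = \overline{\gamma}, \kappa = \gamma$ and the free parameter $\lambda$; any residual diagram ending in a dangling $e_{i,j}$ with an outgoing partial trace then drops out. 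This is precisely the mechanism that forces the entries of the matrix to depend on $\gamma$, $\overline{\gamma}$, and $\lambda$, while the first six rows (corresponding to pure braid basis elements containing no $e_{i,j}$) reduce to expressions in $q$ alone.

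The hard part will be bookkeeping volume rather than any conceptual obstacle: there are $4 \cdot 24 = 96$ inner products, and each requires several successive local simplifications, so care is needed to track signs and the propagation of $\gamma$ through multiple convolutions and through complex conjugation. A useful internal consistency check is that taking the $\dagger$-adjoint of a target diagram exchanges $e_{1,2} \leftrightarrow e_{2,1}$, complex-conjugates scalars, and (by Corollary~\ref{cor:com} together with Lemma~\ref{lem:conv}) sends $\gamma \leftrightarrow \overline{\gamma}$ while fixing $\lambda$; accordingly the displayed matrix should exhibit a corresponding symmetry relating the $(\text{doubleC}, \text{doubleE})$ and $(\text{doubleG}, \text{doubleJ})$ pairs of columns, which one can read off entry-by-entry as a sanity check on the computation. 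With these symmetries and the explicit Gram matrix inverse in hand, the remainder of the argument reduces to a direct if lengthy matrix manipulation.
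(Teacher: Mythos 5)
Your strategy is sound in principle but it is genuinely different from what the paper does, and the difference matters for feasibility. The paper does \emph{not} pair the target diagrams against the basis and invert the Gram matrix. Instead, for each target diagram it computes nine different \emph{partial} traces (various cappings with braidings inserted), each of which collapses the two matrix units inside the target into a single convolution, so that Lemma~\ref{lem:conv} immediately rewrites the result in the four-dimensional basis of $\operatorname{End}_{\mathcal{C}_A}(\ydiagram{1}^{\otimes 2})$. Computing the same partial traces of the $24$ basis elements of $S$ (each containing at most one matrix unit, so only the vanishing partial-trace relations are needed) yields $9\times 4 = 36$ linear equations in the $24$ unknown coefficients, which the paper then solves. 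Your route instead requires evaluating the $96$ full closed diagrams $\operatorname{tr}(s^{\dagger}D)$, each of which contains \emph{three} matrix units. This is a strictly heavier diagrammatic burden: the convolution product only applies to a specific cap-and-cup configuration of two matrix units, and you have not argued that in every one of the $96$ closed diagrams some pair of the three matrix units can be brought into that configuration using only the half-braid and absorption relations (recall the paper already flags it as ``somewhat surprising'' that even the two-matrix-unit inner products of the Gram matrix are computable from the Subsection~4.1 relations alone). Your method does buy something the paper's does not: invertibility of the Gram matrix guarantees a priori that your $24$ linear conditions determine the coefficients uniquely, whereas the paper must verify that its overdetermined system of $36$ equations is consistent and has a unique solution. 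Your $\dagger$-symmetry sanity check relating the column pairs is also correct and is implicitly present in the paper's answer. But before this could stand as a proof you would need to verify, configuration by configuration, that the three-matrix-unit closed diagrams all reduce — or simply adopt the paper's partial-trace scheme, which sidesteps the issue entirely.
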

\begin{proof}
    We demonstrate the proof for the first column. The remaining three follow the same idea.

    We compute the following using the relations of $\mathcal{C}_A$ from Subsections~\ref{sec:inrel} and \ref{sub:conv}.
    \begin{alignat*}{3}
        \att{doubleCtr1}{.4} &= 0 , 
        \att{doubleCtr2}{.4} &= iq \left(\gamma \att{e12}{.4}+\lambda  \att{e21}{.4}\right),
         \att{doubleCtr3}{.4} &= i\left(\gamma \att{e12}{.4}+\lambda  \att{e21}{.4}\right),\\
         \att{doubleCtr4}{.4} &= 0,
         \att{doubleCtr5}{.4} &= -i \left(\gamma \att{e12}{.4}+\lambda  \att{e21}{.4}\right),
         \att{doubleCtr9}{.4} &= 0\\
         \att{doubleCtr7}{.4} &= 0,
         \att{doubleCtr6}{.4} &= -i q^{-1}\left(\gamma \att{e12}{.4}+\lambda  \att{e21}{.4}\right), \att{doubleCtr8}{.4} &= 0.
    \end{alignat*}
    On the other hand we can compute these partial traces for each of the elements of $S$. As $S$ is a basis for $\operatorname{End}_{\mathcal{C}_A}(\ydiagram{1}^{\otimes 3})$ we have that $\att{doubleC}{.3}$ can be written as a linear combination of the elements of $S$. Each of the above equations gives four individual equations (one for each basis element of $\operatorname{End}_{\mathcal{C}_A}(\ydiagram{1}^{\otimes 2})$) in the coefficients of $\att{doubleC}{.3}$. Solving these 36 equations yields the unique solution as in the statement of the lemma.
\end{proof}

At this point it becomes useful to write down explicit matrix representations of $\operatorname{End}_{\mathcal{C}_A}(\ydiagram{1}^{\otimes 3})$ for the elements 
\[   \att{mrep-1}{.5},\qquad  \att{mrep0}{.5} , \qquad \att{mrep1}{.5} \quad \text{ and } \att{mrep2}{.5}.\]
Note that via the half-braid relation, these four elements generate the algebra $\operatorname{End}_{\mathcal{C}_A}(\ydiagram{1}^{\otimes 3})$ \footnote{We thank Hans Wenzl for bring this to our attention.}. That is we give the matrix for pre-composition on $\operatorname{End}_{\mathcal{C}_A}(\ydiagram{1}^{\otimes 3})$ with respect to the basis $S$ for these four elements. These 24 dimensional matrices are found in the Appendix~\ref{app:A}, as well as in a Mathematica file attached to the arXiv submission of this paper. In this Mathematica file we also include the subsequent computations of this section which use our matrix representation. Note that as this representation is the regular representation of the algebra, it is faithful. While the following results of this section could be obtained by hand, these matrix representations will allow for computer assisted computations in $\operatorname{End}_{\mathcal{C}_A}(\ydiagram{1}^{\otimes 3})$, which are fast, and more accurate than a human. Using this matrix representation we can now pin down our convolution structure constants.
\begin{lem}\label{lem:param}
    We have 
    \[   \lambda = \frac{\sqrt{2} i}{(q^2 - q^{-2})\sqrt{q^2 + q^{-2}}},\qquad \text{and} \qquad  \gamma = \frac{(q^2 - 1 + q^{-2})\sqrt{2} i}{(q^2 - q^{-2})\sqrt{q^2 + q^{-2}}}.      \]
\end{lem}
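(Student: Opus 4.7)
The plan is to leverage the $24$-dimensional matrix representations of the four generators of $\operatorname{End}_{\mathcal{C}_A}(\ydiagram{1}^{\otimes 3})$ tabulated in Appendix~\ref{app:A}. Because the regular representation is faithful, any polynomial identity that must hold in the algebra translates into a matrix identity whose entries are rational functions of $\gamma$, $\lambda$, and $q$. This turns the problem of pinning down the convolution structure constants into solving a polynomial system.

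We already have one constraint from Lemma~\ref{lem:conv}, namely $\lambda^2 - |\gamma|^2 = 2/(q+q^{-1})^2$, together with $\omega = \overline{\gamma}$ and the computed values of $\alpha,\beta,\delta,\epsilon$. The additional constraints I would use come from the Wedderburn decomposition dictated by the fusion graph of Theorem~\ref{thm:graph2}: since $\ydiagram{1}^{\otimes 3} \cong 4\cdot\ydiagram{3} \oplus 2\cdot A \oplus 2\cdot B$ in $\mathcal{C}_A$, we must have
\[
\operatorname{End}_{\mathcal{C}_A}(\ydiagram{1}^{\otimes 3}) \cong M_4(\mathbb{C})\oplus M_2(\mathbb{C})\oplus M_2(\mathbb{C}).
\]
Concretely, the Young symmetrizer projection onto $\ydiagram{3}$ inside the Hecke subalgebra has a classical $q$-polynomial expression in the two braid generators; its matrix in the regular representation must have rank exactly $16$, and its orthogonal complement must further split as a sum of two rank-$4$ idempotents corresponding to $A$ and $B$. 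These rank and block-structure demands give a finite system of polynomial equations in $\gamma$, $\lambda$, $q$. An equivalent and perhaps more direct source of constraints is to compute in the matrix representation the minimal polynomial of a mixed product such as $\att{mrep1}{.3}\cdot \att{mrep2}{.3}$, and require that its spectrum agree with the one predicted by the fusion decomposition; this produces the same polynomial system.

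Combining these new equations with $\lambda^2 - |\gamma|^2 = 2/(q+q^{-1})^2$ and the gauge convention $\lambda \geq 0$ should isolate the unique solution stated in the lemma. The main obstacle is algebraic bookkeeping rather than conceptual: the polynomial system involves rational-function coefficients in $q$ of non-trivial degree, so computer algebra is essentially required, and spurious solutions at degenerate roots of unity must be excluded using the positivity of $\lambda$ and the condition $q = e^{2\pi i/(4N)}$ with $N\geq 4$. In practice one verifies the lemma by substituting the proposed closed forms for $\gamma$ and $\lambda$ into every matrix identity and confirming symbolically that each entry reduces to zero as a rational function of $q$, which is precisely the role played by the Mathematica notebook accompanying the paper.
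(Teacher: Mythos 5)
Your overall strategy --- exploit the faithful $24$-dimensional regular representation to convert the determination of $\gamma$ and $\lambda$ into a polynomial system over $\mathbb{C}(q)$ and solve it by computer algebra --- is the same as the paper's. But the specific constraints you propose to feed into that system have a real problem, and the argument as written would not close.

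Your primary source of equations is vacuous. The appendix matrices for the two braid generators involve only $q$ (and $z=q-q^{-1}$); neither $\gamma$ nor $\lambda$ appears in them. Consequently the matrix of the Young symmetrizer $p_{\ydiagram{3}}$, or of any other element of the Hecke subalgebra, is independent of $\gamma$ and $\lambda$, so no rank or block-structure condition on such elements can constrain these parameters. (Separately, the rank of $p_{\ydiagram{3}}$ in the regular representation is $\dim\Hom_{\mathcal{C}_A}(\ydiagram{1}^{\otimes 3}\to\ydiagram{3})=4$, not $16$; rank $16$ belongs to the \emph{central} idempotent of the $M_4(\mathbb{C})$ block, which is not the Young symmetrizer and does not lie in the Hecke subalgebra.) Your secondary source --- the spectrum of the product of the $e_{1,2}$ and $e_{2,1}$ matrices --- is at least sensitive to $\gamma$, but it is strictly weaker than the condition the paper actually imposes, namely the full $24\times 24$ matrix identity expressing the matrix-unit relation $e_{1,2}\circ e_{2,1}=p_{\ydiagram{2}}=\frac{1}{1+q^2}(\mathrm{id}+q\,\sigma)$ left-embedded into the three-strand space, with the right-hand side computed from the braid matrices. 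From that identity the paper reads off three specific entries: one forces $\gamma^2=\overline{\gamma}^2$, one is a quadratic determining $\gamma$ up to sign, and one yields the linear relation $\lambda(q^4-q^2+1)=q^2\overline{\gamma}$ which fixes the sign. This last point is essential and is missing from your plan: the only relation you carry between $\gamma$ and $\lambda$ is $\lambda^2=\tfrac{2}{(q+q^{-1})^2}+|\gamma|^2$, which together with the gauge $\lambda\geq 0$ determines $|\gamma|$ but can never distinguish $\gamma$ from $-\gamma$ (or from $\pm i\gamma$). You need an equation linear in $\gamma$, and you have not identified where it comes from.
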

\begin{proof}
We recall the relation
\[  \att{matRel3}{.5} = \frac{1}{1+q^2}\left( \att{idid}{.5} + q\att{YY}{.5}  \right).\]
Using the standard left embedding of this relation into $\operatorname{End}_{\mathcal{C}_A}(\ydiagram{1}^{\otimes 3})$, and using our explicit matrix representation, we obtain the equality of two explicit $24\times 24$ matrices. The $(9,13)$-th entry gives the equality
\[q \left(q^2-1\right)^2 \left(q^2+1\right) \left(\gamma ^2-\overline{\gamma}^2\right)=0.\]
This implies that $\gamma^2 = \overline{\gamma}^2$.

We now consider the $(7,9)$-th entry, which gives
\[q \left(q^2-1\right) \left(q^2 \left(q^{10}+q^8-q^2-1\right) \overline{\gamma}^2+4 \left(q^5-q^3+q\right)^2+\gamma ^2 (q-1) (q+1) \left(q^2-2\right) \left(q^2+1\right)^2 \left(q^4+1\right)\right)=0.\]
Solving this equation gives that
\[\gamma = \pm \frac{(q^2 - 1 + q^{-2})\sqrt{2} i}{(q^2 - q^{-2})\sqrt{q^2 + q^{-2}}}\in \mathbb{R}.\]

Recalling the equation
\[\lambda^2 = \frac{2}{(q+q^{-1})^2} + |\gamma|^2\]
from Lemma~\ref{lem:conv} we obtain (recalling the positive gauge choice for $\lambda$)
\[ \lambda = \frac{\sqrt{2} i}{(q^2 - q^{-2})\sqrt{q^2 + q^{-2}}}.\]
Finally we consider the $(14,9)$-th entry to obtain
\[  \gamma  \left(q^4-1\right) \left(\lambda  \left(q^4-q^2+1\right)-q^2 \overline{\gamma}\right) = 0.  \]
This equation is only compatible with the solution as in the statement of the lemma.

\end{proof}


We have now explicitly determined all relations in $\mathcal{C}_A$ up to the 3-box space. While we will not prove it, this is sufficient to evaluate, and hence we have uniqueness of the subcategory generated by the braid and 2-box matrix units. By uniqueness this category must be $\mathcal{SE}_N$. Instead, we take the more constructive approach of explicitly building a functor from $\mathcal{SE}_N$ into $\mathcal{C}_A$. A benefit of this approach is that we get a sanity check on our previous computations.

\begin{thm}\label{thm:classN}
    Let $A\in \mathcal{C}$ be 1-super-transitive, and let $k= N$. Then there exists a faithful dominant monoidal functor
    \[   \overline{ \mathcal{SE}_N } \to \mathcal{C}_A                 \]
    In particular, $A$ is a simple current extension of the algebra $A_{\mathfrak{so}_{N^2-1}}$.
\end{thm}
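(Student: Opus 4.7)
The plan is to build a monoidal functor $\mathcal{SE}_N \to \mathcal{C}_A$ generator-by-generator, verify the defining relations of $\mathcal{SE}_N$ using the computations of Subsections~\ref{sec:inrel} and \ref{sub:conv}, descend to the semisimplification, and then invoke the equivalence of Theorem~\ref{thm:me} together with the standard bijection between \`etale algebra extensions and \`etale algebras in local modules.

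First I would define the functor $\mathcal{SE}_N \to \mathcal{C}_A$ on generators by sending the strings $+, -$ to $\ydiagram{1}$ and its dual, the Kazhdan--Wenzl generators to the corresponding morphisms in the image of the composition $\overline{\mathcal{SH}(q, N)} \hookrightarrow \mathcal{C} \to \mathcal{C}_A$, and the splitting endomorphism generator to a suitable scalar multiple of $e_{1,2}$ (chosen to normalise conventions against Definition~\ref{def:SEN}). The $\mathcal{SH}(q,N)$-relations are then automatic. The additional relations of $\mathcal{SE}_N$ split into three groups: the over-braid half-braid relation, the trace relations, and the algebra relations for compositions of the splitting endomorphism with itself and with the braid. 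The over-braid relation is exactly Lemma~\ref{lem:OB} applied to $e_{1,2}$, and the trace relations are the partial trace identities stated explicitly at the start of Subsection~\ref{sec:inrel}. The composition/algebra relations are equivalent, via the isomorphism $\tau$ or equivalently via the convolution product $\bowtie$, to the structure-constant identities of Lemma~\ref{lem:conv}; the free parameters $\gamma, \lambda$ are pinned down in Lemma~\ref{lem:param} to precisely the values needed. Thus the main work is a tedious but essentially mechanical bookkeeping check: for each defining relation of $\mathcal{SE}_N$, verify that the scalar identity it reduces to holds under the computed values of $\gamma, \lambda$.

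Since $\mathcal{C}_A$ is unitary and hence semisimple, the functor descends to $\overline{\mathcal{SE}_N} \to \mathcal{C}_A$. For faithfulness, the evaluation algorithm for $\mathcal{SE}_N$ established in \cite{me} during the proof of Theorem~\ref{thm:me} shows $\overline{\mathcal{SE}_N}$ is evaluable with simple unit, and the image of $\ydiagram{1}$ is nonzero (indeed simple, by Lemma~\ref{lem:simp}); by a standard semisimplification argument this forces the induced functor to be faithful. Dominance is immediate because the image already contains every free module $\mathcal{F}_A(X)$ and the free module functor $\mathcal{F}_A : \mathcal{C} \to \mathcal{C}_A$ is itself dominant.

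Taking the Cauchy completion and using Theorem~\ref{thm:me} then yields a faithful dominant monoidal functor $\mathcal{C}_{A_{\mathfrak{so}_{N^2-1}}} \to \mathcal{C}_A$ between fusion categories. Under the bijection recalled in the introduction applied to the \`etale algebra $A_{\mathfrak{so}_{N^2-1}} \in \mathcal{C}$, this realises $A$ as an \`etale algebra extension of $A_{\mathfrak{so}_{N^2-1}}$ in $\mathcal{C}$. Since $\mathcal{C}_{A_{\mathfrak{so}_{N^2-1}}}^0 \cong \mathcal{C}(\mathfrak{so}_{N^2-1},1)$ and every non-trivial \`etale algebra there is pointed (as recalled in the remark following Theorem~\ref{thm:main}), this extension is automatically a simple current extension. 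The hard part in this program is the relation-by-relation verification in the second paragraph; once that cross-check is complete, the remainder of the argument is essentially formal.
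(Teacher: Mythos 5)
Your overall architecture matches the paper's: build the functor on generators, verify the defining relations of $\mathcal{SE}_N$ using the two- and three-box computations, descend to the semisimplification by unitarity of $\mathcal{C}_A$, and then combine Theorem~\ref{thm:me} with the extension/local-module bijection and the pointedness of all \'etale algebras in $\mathcal{C}(\mathfrak{so}_{N^2-1},1)$ to conclude that $A$ is a simple current extension. Your last two paragraphs are essentially the paper's argument.

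The gap is in the definition of the functor itself: the splitting generator of $\mathcal{SE}_N$ cannot be sent to any scalar multiple of $e_{1,2}$, so no amount of ``normalising conventions'' rescues that ansatz. Two of the defining relations rule it out. First, the trace relations in Definition~\ref{def:SEN} require one partial trace of the generator to vanish and a second (braided) partial trace to equal $\mathbf{i}$ times a single strand; but every partial trace of $e_{1,2}$ and $e_{2,1}$ is zero by the relations of Subsection~\ref{sec:inrel}, and since $Y\circ e_{1,2}=q\,e_{1,2}$ the braided partial trace of $c\,e_{1,2}$ is also zero, so the nonzero trace relation fails identically. Second, $e_{1,2}$ is a nilpotent matrix unit, so its self-compositions vanish, whereas the algebra relations of $\mathcal{SE}_N$ force the generator to compose with itself to a nontrivial combination of the generator and Hecke elements. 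The correct image, as in the paper, is the combination
\[-\frac{\left(q^2-1\right)^2}{\left(q^2+1\right)^2}\,\operatorname{id} \;+\; \frac{2 q \left(q^2-1\right)}{\left(q^2+1\right)^2}\,Y \;+\; \frac{i \sqrt{2} \sqrt{q^4+1}}{q^2+1}\left( e_{1,2} + e_{2,1}\right),\]
whose identity and braid components are exactly what produce the required values $0$ and $i$ in the two trace checks, and whose off-diagonal part must involve both $e_{1,2}$ and $e_{2,1}$. Finding these coefficients is not a bookkeeping normalisation: it relies on the values of $\gamma$ and $\lambda$ from Lemma~\ref{lem:param} and on the matrix representation of $\operatorname{End}_{\mathcal{C}_A}(\ydiagram{1}^{\otimes 3})$ to verify the three-strand relations, which is where the real content of the proof sits. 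Until the correct image of the generator is written down and those relations are checked against it, the functor has not been constructed.
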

\begin{proof}
    We define the functor $\mathcal{F}$ which sends $+\mapsto \mathcal{F}_A(\ydiagram{1})$, $-\mapsto \mathcal{F}_A(\ydiagram{1}^*)$, and
    \begin{align*}
        \att{b1}{.25}&\mapsto \att{b1}{.25}\\
        \att{kw}{.3}&\mapsto \att{kw}{.3}\\
        \att{splittingEnd2}{.25}& \mapsto   -\frac{\left(q^2-1\right)^2}{\left(q^2+1\right)^2} \att{b0}{.25} + \frac{2 q \left(q^2-1\right)}{\left(q^2+1\right)^2} \att{b1}{.25} + \frac{i \sqrt{2} \sqrt{q^4+1}}{q^2+1}\left( \att{e12}{.4} + \att{e21}{.4}\right).
    \end{align*}
    We now just need to check that the defining relations of $\mathcal{SE}_N$ are preserved by this functor. While this could be done by hand, we instead use our explicit matrix representation of $\operatorname{End}_{\mathcal{C}_A}(\ydiagram{1}^{\otimes 3})$. This allows a computer to verify relations (Exchange), (Slide), (Stack), and (Commute). We include this computer verification in the arXiv submission of this paper. Note that (Over-Braid) holds automatically. The only relations we need to check by hand are the partial trace relations. The right partial trace of the image of $\att{splittingEnd2}{.2}$ is 
    \[   -\frac{\left(q^2-1\right)^2}{\left(q^2+1\right)^2} \frac{2i}{q-q^{-1}} + \frac{2 q \left(q^2-1\right)}{\left(q^2+1\right)^2}i = 0   \]
    as required, and the right partial trace of the image of $\att{b2}{.2}$ is 
    \[    -\frac{\left(q^2-1\right)^2}{\left(q^2+1\right)^2} i + \frac{2 q \left(q^2-1\right)}{\left(q^2+1\right)^2}\left( \frac{2i}{q-q^{-1}} + (q-q^{-1})i \right) = i   \]
    as required. 
    
    It follows that $\mathcal{F}$ is a dominant monoidal functor $\mathcal{SE}_N \to \mathcal{C}_A$. As $\mathcal{C}_A$ is unitary, we get that $\mathcal{F}$ descends to faithful dominant monoidal functor 
    \[   \overline{ \mathcal{SE}_N } \to \mathcal{C}_A .                \]

    By Theorem~\ref{thm:me} we have that $\operatorname{Ab}(\overline{ \mathcal{SE}_N }) \simeq \mathcal{C}_{ A_{\mathfrak{so}_{N^2-1}}  }$. As in the proof of \cite[Theorem 4.17]{me}, this implies that $A_{\mathfrak{so}_{N^2-1}}$ is an \`etale sub-algebra of $A$. By \cite[Section 3.6]{LagrangeUkraine} we have that \`etale algebra extensions of $A_{\mathfrak{so}_{N^2-1}}$ are in bijection with \`etale algebras in $\mathcal{C}(\mathfrak{sl}_N, N)_{A_{\mathfrak{so}_{N^2-1}}}$. By \cite[Theorem 5.2]{kril} we have that there is a braided equivalence $\mathcal{C}(\mathfrak{sl}_N, N)_{A_{\mathfrak{so}_{N^2-1}}} \cong \mathcal{C}(\mathfrak{so}_{N^2-1}, 1)$. This category is either equivalence to an Ising category, or is pointed. In either case we have that all \`etale algebras in these categories are pointed. Thus $A$ is a simple current extension of $A_{\mathfrak{so}_{N^2-1}}$.
\end{proof}

\section{Classification in the case of \texorpdfstring{$k = N\pm 2$}{k = N +-2}}\label{sec:kN2}
In this section we deal with the cases of $k = N\pm 2$. As these two cases are level-rank dual to each other we will detail the proof in the $k= N-2$ case, and leave the $k = N+2$ case to the reader (where one just transposes all Young diagrams and replaces the braid action on the projection with $q$ instead of $-q^{-1}$).

Let $A$ be a 1-super-transitive \`etale algebra in $\mathcal{C}$, and let $k = N-2$. Our goal is to classify $A$ by showing that there is a faithful dominant functor
\[     \overline{\mathcal{SD}^-_N} \to \mathcal{C}_A.      \]
We recall from Theorem~\ref{thm:graph1} that we have two cases for the fusion graph of $\mathcal{C}_A$ up to depth 3. These are
 \[ i),\quad  \att{CaseHard}{.6},\qquad\text{or}\qquad  ii),\quad \att{CaseKnown}{.6}   .\]
 These two cases we will deal with separately. For case i) we will use similar techniques as in Section~\ref{sec:kN} to derive a contradiction, and hence rule out this fusion graph.
\subsection{Initial Relations}

Before we branch into the two cases, we can find some basic relations in general. Let $r$ be the unique minimal central projection onto $R$ in $\operatorname{End}_{\mathcal{C}_A}(\ydiagram{1}^{\otimes 2})$. As $R$ is a minimal central subprojection of $p_{\ydiagram{1,1}}$ it follows that we have the relation
\[   \att{rbraidTop}{.5} =\att{rbraidBot}{.5} =  -q^{-1} \att{rProj}{.5}.\]
Further, from \cite{me} we have the half-braid relation
\[   \att{rhb1}{.5} = \att{rhb2}{.5}.   \]
It is convenient to recall that $q^N = i \cdot q$ in this case. Let us write $d_R$ for $\dim(R)$, and let us write
\[ d_{\ydiagram{1}} := \dim(\ydiagram{1}) = i \frac{q + q^{-1}}{q-q^{-1}}.  \]
We recall that we have two possibilities for the fusion graph of $\square$ in $\mathcal{C}_A$ up to depth three. To obtain further relations we have to analyze each case in detail.



\subsection{Case i)}

We recall that in this case the fusion graph for $\ydiagram{1}$ in $\mathcal{C}_A$ up to depth three is
\[\att{CaseHard}{.6}\]
From this we obtain two facts which will ultimately lead to a contradiction. The first is that
\[     R \otimes \ydiagram{1} \cong Y\oplus Z \cong \ydiagram{1,1,1}.    \]
In particular this implies that 
\[   d_R = \frac{\dim\left({\ydiagram{1,1,1}}\right)}{d_{\ydiagram{1}}} = \frac{2}{-q^{-4} + q^{-2} + q^2 - q^4}  , \]
and so we have the partial trace relation
\[  \att{rptrL}{.5} =  \frac{2i}{-q^{-4} - q^{-2} + q^2 + q^4}\att{str}{.5} =  \att{rptrR}{.5} \]
The second fact is that
\[  \dim\Hom_{\mathcal{C}_A}(R  \otimes \ydiagram{1}\to  \ydiagram{2}\otimes \ydiagram{1}) = 1.  \]

We begin by studying the convolution product of $r$ with itself. To simplify computations, we will work with the basis $\left\{  p_{\ydiagram{2}}, p_{\ydiagram{1,1}}, r\right\}$ for $\operatorname{End}_{\mathcal{C}_A}(+^{\otimes 2})$. It follows that there exist scalars $\alpha, \beta, \gamma \in \mathbb{C}$ such that
\[\att{rConv}{.5} = \alpha \att{p2a}{.5}+\beta \att{p22}{.5} + \gamma \att{rProj}{.5}.\]

\begin{lem}
    We have that $\gamma \in \mathbb{R}$, and that 
    \begin{align*}
        \alpha &= -\frac{2 q^5 \left(q^4+q^2-1\right)}{\left(q^2-1\right) \left(q^2+1\right)^2 \left(q^4+q^2+1\right)^2}\\ 
        \beta &= \frac{q^2 \left(q \left(q^8-q^6-2 q^4+q^2-1\right)-\gamma  \left(q^2+1\right)^2 \left(q^6-1\right)\right)}{\left(q^2-1\right) \left(q^2+1\right)^2 \left(q^4+q^2+1\right)^2}.
    \end{align*}
\end{lem}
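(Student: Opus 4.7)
The plan is to mimic the template of Lemma~\ref{lem:conv}, now using the self-adjoint projection $r$ in place of the matrix units. Three unknowns $\alpha,\beta,\gamma$ must be constrained, but the lemma only asks for $\alpha$, a formula for $\beta$ in terms of $\gamma$, and the reality of $\gamma$, so only two linear equations and one symmetry argument are needed.

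First, to show $\gamma\in\mathbb{R}$: since $r$ is an orthogonal projection, $r=r^\dagger$, and both $p_{\ydiagram{2}}$ and $p_{\ydiagram{1,1}}$ are also self-adjoint. Applying $\dagger$ to the diagram defining $r\bowtie r$ reverses the two braid crossings; expanding the inverse braids via the Hecke relation $Y^{-1}=Y-(q-q^{-1})\mathrm{id}$ and using the braid-absorption relation $r\circ Y=Y\circ r=-q^{-1}r$, one verifies $(r\bowtie r)^\dagger=r\bowtie r$. Matching coefficients in the self-adjoint basis $\{p_{\ydiagram{2}},p_{\ydiagram{1,1}},r\}$ then forces $\alpha,\beta,\gamma\in\mathbb{R}$.

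Next I would extract $\alpha$ and $\beta$ by computing two linearly independent scalar evaluations of $r\bowtie r$, in direct analogy with the partial-trace computations in Lemma~\ref{lem:conv}. Natural choices are the left partial trace of $r\bowtie r$ and the left partial trace after inserting one positive braid. On the decomposition side, each of $p_{\ydiagram{2}},p_{\ydiagram{1,1}},r$ has known partial-trace behavior, so each evaluation yields a linear equation in $\alpha,\beta,\gamma$ with coefficients in $d_{\ydiagram{2}}$, $d_{\ydiagram{1,1}}$, $d_R$, $d_{\ydiagram{1}}$ and $q$. On the diagrammatic side, one uses the half-braid relation for $r$ to transport one copy of $r$ across the intervening braids, invokes $r\circ r=r$ to merge the two $r$'s into a single $r$, and then clears the residual crossings with braid-absorption and applies the partial-trace formula $\mathrm{ptr}(r)=(d_R/d_{\ydiagram{1}})\mathrm{id}_{\ydiagram{1}}$. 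Solving the resulting linear system then produces $\alpha$ purely in terms of $q$ and $\beta$ as a linear function of $\gamma$.

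The main obstacle is organising the diagrammatic reductions coherently. Each configuration has two copies of $r$ separated by braid crossings in non-trivial positions, and each application of the half-braid relation shifts the crossings to a new location, so the simplifications must be chained carefully while keeping track of the accumulated $-q^{-1}$ factors and of $q^N=iq$ coming from the partial trace of the braid. The fact that $\gamma$ is not pinned down here is expected: the two-strand relations for $r$ yield only two independent linear equations, and fixing $\gamma$ will require a three-strand analysis in $\operatorname{End}_{\mathcal{C}_A}(\ydiagram{1}^{\otimes 3})$ analogous to Lemma~\ref{lem:param}, using the depth-3 fusion data from Theorem~\ref{thm:graph1}.
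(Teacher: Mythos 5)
Your overall strategy coincides with the paper's: two scalar evaluations (a partial trace of $r\bowtie r$ with and without an inserted braid) yield a $2\times 2$ linear system determining $\alpha$ and $\beta$ as functions of $\gamma$, and a $\dagger$-argument gives the reality of $\gamma$. However, the $\dagger$ step as you state it is wrong, and the error is not cosmetic. Taking the dagger of the diagram for $r\bowtie r$ inverts the two crossings, and resolving each inverse crossing via the Hecke relation leaves a residual term: the paper computes
\[
(r\bowtie r)^\dagger \;=\; r\bowtie r \;+\; (q-q^{-1})\,\frac{4q^8}{\left(q^8+q^6-q^2-1\right)^2}\,\operatorname{id}_{\,\ydiagram{1}^{\otimes 2}},
\]
so $r\bowtie r$ is \emph{not} self-adjoint. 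Since $q$ is unimodular, $q-q^{-1}$ is purely imaginary and the correction term is nonzero; expanding $\operatorname{id}=p_{\ydiagram{2}}+p_{\ydiagram{1,1}}$ shows it perturbs the coefficients of $p_{\ydiagram{2}}$ and $p_{\ydiagram{1,1}}$ but has no component along $r$. Consequently only $\gamma=\overline{\gamma}$ follows, while $\alpha$ and $\beta$ are genuinely non-real. Your claim that ``$\alpha,\beta,\gamma\in\mathbb{R}$'' in fact contradicts the formula you are trying to prove: one checks directly that $\overline{\alpha}-\alpha=\frac{4q^7}{(q^2-1)(q^2+1)^2(q^4+q^2+1)^2}\neq 0$ (using $\overline{q}=q^{-1}$), which matches exactly the residual term above. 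So you must carry the identity correction through the $\dagger$ computation rather than asserting exact self-adjointness; once that is done, the reality of $\gamma$ is recovered and the rest of your outline (including deferring the determination of $\gamma$ itself to the three-strand analysis) agrees with the paper.
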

\begin{proof}
    We compute
    \[  \att{rConve1}{.5}= -\frac{4 i q}{\left(q^4 + q^2- \frac{1}{q^2}-\frac{1}{q^4}\right)^2}\att{str}{.5}.  \]
    Hence
    \begin{align*}   -\frac{4 i q}{\left(q^4 + q^2- \frac{1}{q^2}-\frac{1}{q^4}\right)^2} &= \alpha\frac{\dim(\ydiagram{2})}{d_{\ydiagram{1}}} +\beta\frac{\dim(\ydiagram{1,1})}{d_{\ydiagram{1}}} + \gamma   \frac{2i}{-q^{-4} - q^{-2} + q^2 + q^4}  \\ &= \frac{i \left(2 \gamma  q^4+\left(q^4+q^2+1\right) \left(\alpha +\alpha  q^4+2 \beta  q^2\right)\right)}{q^8+q^6-q^2-1}. \end{align*}
    We also compute 
    \[    \att{rConve2}{.5}  =   -\frac{2 i \left(q^{10}+2 q^8+q^4\right)}{\left(q^2-1\right) \left(q^6+2 q^4+2 q^2+1\right)^2}. \]
    Thus
   \begin{align*}   -\frac{2 i \left(q^{10}+2 q^8+q^4\right)}{\left(q^2-1\right) \left(q^6+2 q^4+2 q^2+1\right)^2} &= \alpha q\frac{\dim(\ydiagram{2})}{d_{\ydiagram{1}}} -\beta q^{-1}\frac{\dim(\ydiagram{1,1})}{d_{\ydiagram{1}}} - \gamma q^{-1}  \frac{2i}{-q^{-4} - q^{-2} + q^2 + q^4}  \\ &= \frac{i \left(\left(q^5+q^3+q\right) \left(\alpha -2 \beta +\alpha  q^4\right)-2 \gamma  q^3\right)}{q^8+q^6-q^2-1}. \end{align*}
   Solving this linear system gives the solutions for $\alpha$ and $\beta$ as in the statement of the lemma.

   Taking daggers gives 
   \begin{align*}   \att{rConv}{.5}^\dag &=\att{rConvDag}{.5} \\
   &= \att{rConv}{.5} + (q-q^{-1})\frac{4 q^8}{\left(q^8+q^6-q^2-1\right)^2} \att{rConvidid}{.5}    \end{align*}
   This implies that $\gamma = \overline{\gamma}$.
\end{proof}

As in the $k= N$ case, we will study $\End_{\mathcal{C}_A}(\ydiagram{1}^{\otimes 3})$ in order to determine $\gamma$. We recall from Theorem~\ref{thm:classN-2} that this space is 15-dimensional.

\begin{defn}
We define   
\begin{align*} S:= \{&  \att{c1}{.5} ,\att{c2}{.5} ,\att{c3}{.5} ,\att{c4}{.5} ,\att{c5}{.5} ,\att{c6}{.5} ,\att{r7}{.5} ,\att{r8}{.5}  ,\att{r9}{.5} ,  \\
    & \att{r10}{.5}  ,\att{r11}{.5}  ,\att{r12r}{.5}   ,\att{r13}{.5}   ,\att{r14}{.5}   ,\att{r15}{.5}  ,\att{rr}{.5}   \}\subset \End_{\mathcal{C}_A}(+^{\otimes 3}).
    \end{align*}
\end{defn}
Note that this set over-spans $\End_{\mathcal{C}_A}(\ydiagram{1}^{\otimes 3})$, and so there must be a linear relation between these elements. Using our known relations we can compute the inner-product between any two elements of $S$. This computation yields a $16\times 16$ matrix over $\mathbb{C}(q,\gamma)$ which is too large to include here. This matrix along with the subsequent computations can be found in a Mathematica file attached to the arXiv submission of this paper. As a consequence of this matrix of inner products, we obtain the following.
\begin{lem}\label{lem:rrRel}
    We have that the first 15 elements of $S$ are a basis of $\End_{\mathcal{C}_A}(\ydiagram{1}^{\otimes 3})$ for $N\geq 5$. Furthermore for $N\geq 5$ we have that $\gamma$ is one of 
    \[  \gamma_{\pm} :=   i \frac{ \pm q^3 + 2i q - 2i q^{-1} \pm q^{-3}}{ q^4 + q^2 -q^{-2} - q^{-4}  }.  \]
    For these two solutions of $\gamma$ we have that the element $\att{rr}{.5}$ can be written with respect to the basis of the first 15 elements of $S$ as
    \[\frac{1}{q^3 + 2q + 2q^{-1} + q^{-3}}\begin{bmatrix}\frac{(1+i) q^3 \left(i q^4+q^2-1\right)}{q^6+2 q^4+2 q^2+1}\\\frac{(1+i) \left(-i q^6-q^4+q^2\right)}{q^6+2 q^4+2 q^2+1}\\\frac{(1+i) \left(-i q^6-q^4+q^2\right)}{q^6+2 q^4+2 q^2+1}\\\frac{(1+i) \left(q^3+i q^5\right)}{q^6+2 q^4+2 q^2+1}\\\frac{(1+i) q \left(i q^4+q^2-1\right)}{q^6+2 q^4+2 q^2+1}\\\frac{(1-i) q^2 \left(q^2-i\right)}{q^6+2 q^4+2 q^2+1}\\\frac{(1-i) q^4+(1+i)}{2 q}\\\frac{(1-i) q^4+(1+i)}{2 q}\\-\frac{\left(\frac{1}{2}-\frac{i}{2}\right) \left(q^4+i\right)}{q^2}\\i\\-\frac{\left(\frac{1}{2}-\frac{i}{2}\right) \left(q^4+i\right)}{q^2}\\i\\-i q\\\frac{1}{q}-q\\-\frac{i}{q}\end{bmatrix} \qquad \text{ and } \qquad    
\frac{1}{q^3 + 2q + 2q^{-1} + q^{-3}}\begin{bmatrix}{}
 -\frac{(1+i) q^3 \left(q^4+i q^2-i\right)}{q^6+2 q^4+2 q^2+1} \\
 \frac{(1+i) q^2 \left(q^4+i q^2-i\right)}{q^6+2 q^4+2 q^2+1} \\
 \frac{(1+i) q^2 \left(q^4+i q^2-i\right)}{q^6+2 q^4+2 q^2+1} \\
 -\frac{(1+i) q^3 \left(q^2+i\right)}{q^6+2 q^4+2 q^2+1} \\
 -\frac{(1+i) q \left(q^4+i q^2-i\right)}{q^6+2 q^4+2 q^2+1} \\
 \frac{(1+i) q^2 \left(q^2+i\right)}{q^6+2 q^4+2 q^2+1} \\
 \frac{(1+i) q^4+(1-i)}{2 q} \\
 \frac{(1+i) q^4+(1-i)}{2 q} \\
 -\frac{(1+i) q^4+(1-i)}{2 q^2} \\
 -i \\
 -\frac{(1+i) q^4+(1-i)}{2 q^2} \\
 -i \\
 i q \\
 \frac{1}{q}-q \\
 \frac{i}{q} \\
\end{bmatrix}
  \]
  for the solutions $\gamma_+$ and $\gamma_-$ respectively.
\end{lem}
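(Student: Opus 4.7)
The dimension count from Theorem~\ref{thm:graph1} tells us that $\dim \End_{\mathcal{C}_A}(\ydiagram{1}^{\otimes 3})=15$, while $|S|=16$, so $S$ is forced to be linearly dependent. The plan is to extract all the information of the lemma from the $16\times 16$ Gram matrix $G$ whose $(i,j)$-entry is the categorical inner product $\langle s_i,s_j\rangle$. Every such entry can be computed in closed form from the relations already established: the Hecke skein relations and braid relations in $\mathcal{SH}(q,N)$, the half-braid and partial-trace relations for $r$, the identity $\dim R=2/(-q^{-4}-q^{-2}+q^2+q^4)\cdot(-i)$, and the just-computed convolution expansion $r\bowtie r=\alpha p_{\ydiagram{2}}+\beta p_{\ydiagram{1,1}}+\gamma r$. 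Each closed diagram obtained from tracing $s_i^\dagger s_j$ can be reduced to a multiple of the empty diagram by systematically applying these local moves, producing an entry in $\mathbb{C}(q,\gamma)$.

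Next I would establish the basis claim. Since the ambient space has dimension exactly $15$, the principal $15\times 15$ minor $G'$ (obtained by deleting the row and column indexed by $\att{rr}{.25}$) is non-singular if and only if the first $15$ elements of $S$ are linearly independent, and in that case they automatically span. I would show directly that $\det G'$ is a nonzero rational function in $q,\gamma$ whose only zeros in $q$ are at roots of unity of small order excluded by the hypothesis $N\geq 5$ (so that $q=e^{2\pi i/2(2N-2)}$ avoids them). This is a purely computational check best handed to a computer algebra system, and the authors reference a Mathematica file for exactly this purpose.

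With the basis in place, the singularity of $G$ becomes the source of the constraint on $\gamma$. Because $\operatorname{rank}(G)\leq 15$, $\det G=0$. Expanding along the last row/column (or just computing the determinant symbolically), the equation $\det G=0$ becomes a polynomial equation in $\gamma$ over $\mathbb{C}(q)$, and I would check that after cancelling factors that are manifestly nonzero for $N\geq 5$ (the same kind of cyclotomic-avoidance condition as above) it reduces to a quadratic in $\gamma$ whose roots are precisely $\gamma_+$ and $\gamma_-$ as stated. The only subtlety is keeping track of which prefactors one is allowed to cancel; this is what forces the mild restriction $N\geq 5$.

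Finally, to obtain the explicit coordinate vectors for $\att{rr}{.25}$, I would write $\att{rr}{.25}=\sum_{i=1}^{15}c_i\,s_i$ and take the inner product with each basis vector $s_j$. This yields the linear system $G'\mathbf{c}=\mathbf{v}$, where $\mathbf{v}_j=\langle s_j,\att{rr}{.25}\rangle$ is read off from the last column of $G$. Substituting the value $\gamma_\pm$ found in the previous step, inverting $G'$ and multiplying by $\mathbf{v}$ produces the two explicit column vectors recorded in the statement. The main obstacle, and really the only nontrivial technical work, is the bookkeeping involved in computing the $16\times 16$ Gram matrix and the associated determinant in two variables; this is handled by the accompanying computer algebra file rather than by hand.
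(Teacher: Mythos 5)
Your proposal is correct and follows essentially the same route as the paper: compute the $16\times 16$ Gram matrix of $S$ from the established relations, use non-vanishing of the $15\times 15$ principal minor to get the basis claim for $N\geq 5$, use singularity of the full Gram matrix to extract the quadratic determining $\gamma_\pm$, and then solve a linear system for the coordinates of the sixteenth element. The only cosmetic difference is that the paper obtains the final coordinate vectors by computing the null space of the full Gram matrix rather than solving $G'\mathbf{c}=\mathbf{v}$; these are equivalent computations.
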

\begin{proof}
    We compute the determinant of the matrix of inner products of the first 15 elements of $S$ as
    \[\frac{262144 i q^{28} \left(q^2+1\right)^{13} \left(q^4+1\right)^{14} \left(q^4-q^2+1\right)^6}{(q-1)^{45} (q+1)^{45} \left(q^2-q+1\right)^{18} \left(q^2+q+1\right)^{18}}.\]
    This determinant is only zero for $N\leq 4$, and so it follows that the first 15 elements of $S$ are linearly independent for $N\geq 5$. As we know $\dim\End_{\mathcal{C}_A}(\ydiagram{1}^{\otimes 3}) = 15$ it follows that they are a basis.

    We compute the determinant of the matrix of all inner products of the elements of $S$ as
   \newline\resizebox{ \textwidth}{!} 
{  $    \frac{524288 q^{32} \left(q^2+1\right)^{12} \left(q^4+1\right)^{15} \left(q^4-q^2+1\right)^5 \left(q^{14}+4 q^{10}-6 q^8+4 q^6+q^2+\gamma ^2 \left(q^8+q^6-q^2-1\right)^2+4 \gamma  \left(q^{10}-q^6-q^4+1\right) q^3\right)}{\left(q^2-1\right)^{48} \left(q^4+q^2+1\right)^{22}} .   $}
As we know $\dim\End_{\mathcal{C}_A}(+^{\otimes 3}) = 15$ this matrix must be singular, and assuming $N\geq 5$ we must have that \[q^{14}+4 q^{10}-6 q^8+4 q^6+q^2+\gamma ^2 \left(q^8+q^6-q^2-1\right)^2+4 \gamma  \left(q^{10}-q^6-q^4+1\right) q^3=0.\]
This equation has the two solutions as in the statement of the lemma. For each of these two solution we then compute the null-space of the matrix of inner-products to obtain the relations involving the element
\[\att{rr}{.5}.\]
\end{proof}

We will now see that the relation we have obtained in the above lemma is incompatible with 
\[  \dim\Hom_{\mathcal{C}_A}(R  \otimes \ydiagram{1}\to  \ydiagram{2}\otimes \ydiagram{1}) = 1.  \]

\begin{thm}
    Let $A \in \mathcal{C}$ be a 1-super-transitive \`etale algebra object, let $N\geq 5$, and let $k = N-2$. The the fusion graph for $\ydiagram{1}\in \mathcal{C}_A$ up to depth 3 is not the graph (i).
\end{thm}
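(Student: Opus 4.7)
The plan is to use the explicit expressions for $\att{rr}{.3}$ supplied by Lemma~\ref{lem:rrRel} (one for each candidate value $\gamma\in\{\gamma_+,\gamma_-\}$) to compute $\dim\Hom_{\mathcal{C}_A}(R\otimes\ydiagram{1}\to\ydiagram{2}\otimes\ydiagram{1})$ directly, and to exhibit a value different from the $1$ that is forced by graph (i).

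First, I would extend the matrix-representation machinery of Section~\ref{sec:kN} to the present setting. Using the $15$-element basis from Lemma~\ref{lem:rrRel}, I would build the matrices for pre-composition on $\End_{\mathcal{C}_A}(\ydiagram{1}^{\otimes 3})$ by the generators $p_{\ydiagram{2}}\otimes\operatorname{id}$, $r\otimes\operatorname{id}$, $\operatorname{id}\otimes p_{\ydiagram{2}}$ and $\operatorname{id}\otimes r$, together with the braid. The only products in these matrices that a priori fall outside the span of the listed annular elements involve two stacked copies of $r$, and those are handled precisely by the two expressions for $\att{rr}{.3}$ in Lemma~\ref{lem:rrRel}, once for each $\gamma\in\{\gamma_+,\gamma_-\}$. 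Since $\End_{\mathcal{C}_A}(\ydiagram{1}^{\otimes 3})$ is $15$-dimensional and semisimple, its left regular representation is faithful, and everything afterwards is linear algebra over $\mathbb{C}(q)$.

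Next, I would use the identification
\[\dim\Hom_{\mathcal{C}_A}(R\otimes\ydiagram{1}\to\ydiagram{2}\otimes\ydiagram{1}) \;=\; \dim\bigl((p_{\ydiagram{2}}\otimes\operatorname{id})\,\End_{\mathcal{C}_A}(\ydiagram{1}^{\otimes 3})\,(r\otimes\operatorname{id})\bigr),\]
which holds because $p_{\ydiagram{2}}\otimes\operatorname{id}$ and $r\otimes\operatorname{id}$ are the central idempotents projecting $\ydiagram{1}^{\otimes 3}$ onto $\ydiagram{2}\otimes\ydiagram{1}$ and $R\otimes\ydiagram{1}$ respectively. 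In the matrix representation this is simply the rank of the operator $f\mapsto (p_{\ydiagram{2}}\otimes\operatorname{id})\,f\,(r\otimes\operatorname{id})$ on the $15$-dimensional space, and can be evaluated by a computer algebra system in the same style as the verifications already offloaded in Section~\ref{sec:kN}.

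Finally, performing the rank computation separately for $\gamma_+$ and $\gamma_-$ should yield a value different from $1$ in each case, contradicting the constraint coming from graph (i) and excluding it. The main obstacle is purely verification: one must confirm that neither candidate value of $\gamma$ accidentally produces the dimension $1$, and that the answer obtained generically over $\mathbb{C}(q)$ remains correct after specialization to the actual root of unity $q=e^{2\pi i/(2(2N-2))}$ for every $N\geq 7$ with $N\neq 10$. The genericity check is automatic outside a short list of small exceptions (in the spirit of the $(N,k)\in\{(8,10),(10,8)\}$ exclusions in Theorem~\ref{thm:main}), so the substantive content is the rank computation itself, which is routine given the explicit matrix data already in hand.
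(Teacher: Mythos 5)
Your proposal is correct and follows the paper's own strategy: both arguments combine the explicit expressions for the stacked-$r$ element from Lemma~\ref{lem:rrRel} (one for each of $\gamma_\pm$) with the constraint $\dim\Hom_{\mathcal{C}_A}(R\otimes\ydiagram{1}\to\ydiagram{2}\otimes\ydiagram{1})=1$ forced by graph (i), and derive a contradiction by explicit linear algebra in the $15$-dimensional $3$-box space. The only difference is that the paper sidesteps the full corner-rank computation you describe by exhibiting two specific elements of this hom space (computed using the pre-composition matrix of the braid and the Lemma~\ref{lem:rrRel} relation) whose coordinate vectors have incompatible zero patterns, so that the proportionality forced by one-dimensionality would require $q=0$.
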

\begin{proof}
    Suppose that the fusion graph for $\ydiagram{1}\in \mathcal{C}_A$ up to depth 3 is graph (i), then we have one of the relations as in Lemma~\ref{lem:rrRel}. Further, we have that 
    \[  \dim\Hom_{\mathcal{C}_A}(R  \otimes \ydiagram{1}\to  \ydiagram{2}\otimes \ydiagram{1}) = 1.  \]
    This implies that there exists a scalar $\omega \in \mathbb{C}$ such that 
    \[  \att{cont1}{.5} = \omega  \att{cont2}{.5}.   \]
    In order to simplify the computation of both sides of this equation we compute that the action for the top-composition via the braid $\att{mrep0}{.5}$ on $\operatorname{End}_{\mathcal{C}_A}(\ydiagram{1}^{\otimes 3})$ with respect to our basis is 
    \[\left(
\begin{array}{ccccccccccccccc}
 0 & 1 & 0 & 0 & 0 & 0 & 0 & 0 & 0 & 0 & 0 & 0 & 0 & 0 & 0 \\
 1 & q-\frac{1}{q} & 0 & 0 & 0 & 0 & 0 & 0 & 0 & 0 & 0 & 0 & 0 & 0 & 0 \\
 0 & 0 & 0 & 1 & 0 & 0 & 0 & 0 & 0 & 0 & 0 & 0 & 0 & 0 & 0 \\
 0 & 0 & 1 & q-\frac{1}{q} & 0 & 0 & 0 & 0 & 0 & 0 & 0 & 0 & 0 & 0 & 0 \\
 0 & 0 & 0 & 0 & 0 & 1 & 0 & 0 & 0 & 0 & 0 & 0 & 0 & 0 & 0 \\
 0 & 0 & 0 & 0 & 1 & q-\frac{1}{q} & 0 & 0 & 0 & 0 & 0 & 0 & 0 & 0 & 0 \\
 0 & 0 & 0 & 0 & 0 & 0 & -\frac{1}{q} & 0 & 1-q^2 & 0 & 0 & 0 & 0 & -q + q^{-1} & 0 \\
 0 & 0 & 0 & 0 & 0 & 0 & 0 & 0 & 0 & 1 & 0 & 0 &-q + q^{-1} & 0 & 0 \\
 0 & 0 & 0 & 0 & 0 & 0 & 0 & 0 & q-\frac{1}{q} & 0 & 0 & 0 & 0 & 1 & 0 \\
 0 & 0 & 0 & 0 & 0 & 0 & 0 & 1 & 0 & q-\frac{1}{q} & q-\frac{1}{q} & 0 & 0 & 0 & 0 \\
 0 & 0 & 0 & 0 & 0 & 0 & 0 & 0 & 0 & 0 & 0 & 0 & 1 & 0 & 0 \\
 0 & 0 & 0 & 0 & 0 & 0 & 0 & 0 & 0 & 0 & 0 & -\frac{1}{q} & 0 & 0 & 0 \\
 0 & 0 & 0 & 0 & 0 & 0 & 0 & 0 & 0 & 0 & 1 & 0 & q-\frac{1}{q} & 0 & 0 \\
 0 & 0 & 0 & 0 & 0 & 0 & 0 & 0 & 1 & 0 & 0 & 0 & 0 & 0 & 0 \\
 0 & 0 & 0 & 0 & 0 & 0 & 0 & 0 & 0 & 0 & 0 & 0 & 0 & 0 & -\frac{1}{q} \\
\end{array}
\right)\]
    With the relation from Lemma~\ref{lem:rrRel} corresponding to $\gamma = \gamma_+$ we obtain the equality
    \[ \begin{bmatrix}
 0 \\
 0 \\
 \frac{(1+i) q^5}{\left(q^2+1\right)^3 \left(q^4+q^2+1\right)^2} \\
 \frac{(1+i) q^6}{\left(q^2+1\right)^3 \left(q^4+q^2+1\right)^2} \\
 -\frac{(1+i) q^4}{\left(q^2+1\right)^3 \left(q^4+q^2+1\right)^2} \\
 -\frac{(1+i) q^5}{\left(q^2+1\right)^3 \left(q^4+q^2+1\right)^2} \\
 \frac{\left(\frac{1}{2}-\frac{i}{2}\right) q^2 \left(q^2-1\right) \left(q^4+(1+i) q^2-1\right)}{\left(q^2+1\right)^2 \left(q^4+q^2+1\right)} \\
 \frac{\left(\frac{1}{2}+\frac{i}{2}\right) \left(q^6+q^2\right)}{\left(q^2+1\right)^2 \left(q^4+q^2+1\right)} \\
 -\frac{\left(\frac{1}{2}-\frac{i}{2}\right) q^3 \left(q^4+(1+i) q^2-1\right)}{\left(q^2+1\right)^2 \left(q^4+q^2+1\right)} \\
 \frac{\left(\frac{1}{2}+\frac{i}{2}\right) \left(q^5+q\right)}{\left(q^2+1\right)^2 \left(q^4+q^2+1\right)} \\
 -\frac{\left(\frac{1}{2}+\frac{i}{2}\right) \left(q^5+q\right)}{\left(q^2+1\right)^2 \left(q^4+q^2+1\right)} \\
 0 \\
 -\frac{\left(\frac{1}{2}+\frac{i}{2}\right) \left(q^6+q^2\right)}{\left(q^2+1\right)^2 \left(q^4+q^2+1\right)} \\
 -\frac{\left(\frac{1}{2}-\frac{i}{2}\right) q^2 \left(q^4+(1+i) q^2-1\right)}{\left(q^2+1\right)^2 \left(q^4+q^2+1\right)} \\
 0 \\
    \end{bmatrix}   = \frac{\omega}{1 + q^2}  
\begin{bmatrix}
 0 \\
 0 \\
 0 \\
 0 \\
 0 \\
 0 \\
 -q \left(q^2-1\right)  \\
 0 \\
 q^2   \\
 0 \\
 0 \\
 0 \\
 0 \\
 q  \\
 0 \\
\end{bmatrix} .   \]
This gives $q = 0$, and hence we have our contradiction. A similar contradiction occurs for the relation corresponding to $\gamma = \gamma_-$.
\end{proof}

\subsection{Case ii)} In this case we have from the fusion graph, along with Lemma~\ref{lem:com} that $\dim\Hom_{\mathcal{C}_A}(  R \otimes \ydiagram{1} \to \ydiagram{1}\otimes R)= 1$. This along with the half-braid relation implies a relation of the form
\[ \att{rr}{.6} = \alpha \att{rHB}{.6}   \]
for some $\alpha \in \mathbb{C}$. Taking the right partial trace, we obtain
\[  \frac{d_R}{d_{\ydiagram{1}}}\att{rProj}{.6} = \alpha (- i q^{-1})(-q) \att{rProj}{.6} \]
and so $\alpha = -\frac{i d_R}{d_{\ydiagram{1}}}$. On the other hand, using the half-braid relation (twice) we compute
\[ \att{rProj}{.6} = \frac{1}{d_{\ydiagram{1}}} \att{framedR}{.6} = \frac{1}{d_{\ydiagram{1}}\alpha } \att{framed2}{.6}= \frac{1}{d_{\ydiagram{1}}\alpha } \att{framed3}{.6}  = \frac{1}{d_{\ydiagram{1}}\alpha } (iq)(-q^{-1}) \att{rProj}{.6}.    \]
This gives that $\alpha = -\frac{i}{d_{\ydiagram{1}}}$, and so $d_R = 1$. In particular this shows that $R$ is an invertible object.

We thus have the following relations:
\[   \att{rTrace1}{.5} = \frac{q-q^{-1}}{i(q+q^{-1})}\att{st}{.5}= \att{rTrace2}{.5}\qquad \text{and} \qquad   \att{rInv1}{.5} = \att{rInv2}{.5}. \]

With these relations in $\mathcal{C}_A$ we can prove the following theorem.
\begin{thm}\label{thm:classN-2}
    Let $A\in \mathcal{C}$ be 1-super-transitive, and let $k= N-2$. Then there exists a faithful dominant monoidal functor
    \[   \overline{ \mathcal{SD}^-_N } \to \mathcal{C}_A                 \]
    In particular, $A$ is a simple current extension of $A_{\mathfrak{sl}_{N(N-1)/2}}$.
\end{thm}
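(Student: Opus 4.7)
The plan is to mimic the strategy of Theorem~\ref{thm:classN}, but now the presenting category is $\mathcal{SD}^-_N$, which is considerably smaller than $\mathcal{SE}_N$ because its extra generator corresponds to an \emph{invertible} object $R$. We have already done essentially all of the hard work: we know in Case ii) that $R$ is invertible, that $r$ sits inside $p_{\ydiagram{1,1}}$ (so the braid absorbs it at a cost of $-q^{-1}$), that $r$ satisfies the half-braid relation, and that $r$ has the requisite partial trace value $\tfrac{q-q^{-1}}{i(q+q^{-1})}$.

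First I would define a monoidal functor $\mathcal{F}:\mathcal{SD}^-_N\to\mathcal{C}_A$ by sending $+\mapsto \mathcal{F}_A(\ydiagram{1})$, $-\mapsto \mathcal{F}_A(\ydiagram{1}^*)$, the braid generator to the image of the braid under $\mathcal{F}_A$, the Kazhdan--Wenzl generator to itself (this part is automatic because the subcategory $\mathcal{SH}(q,N)\subset\mathcal{SD}^-_N$ is already present in $\mathcal{C}_A$ via $\mathcal{F}_A$), and the new generator $\att{splittingLiu}{.25}$ to the central projection $r$ onto $R$. Verifying the defining relations is then a matter of matching each relation of Definition~\ref{def:SDN} to a relation we have already established: the two ``absorbing'' relations $\att{LiuRel3}{.25}=\att{LiuRel4}{.25}$ and $\att{LiuRel5}{.25}=\att{LiuRel6}{.25}$ follow from $d_R=1$ together with the half-braid identity derived at the end of Case ii); the trace relation $\att{LiuRel1}{.25}=\tfrac{q-q^{-1}}{i(q+q^{-1})}\att{id1}{.25}=\att{LiuRel2}{.25}$ is precisely the computed partial trace of $r$; and the braid-absorption relations $\att{LiuRel7}{.25}=-q^{-1}\att{splittingLiu}{.25}=\att{LiuRel8}{.25}$ (choosing the sign $-q^{-1}$ consistent with $R\subseteq p_{\ydiagram{1,1}}$) were recorded immediately at the start of Subsection~5.1. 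Hence $\mathcal{F}$ is a well-defined dominant monoidal functor.

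Next I would pass to the semisimplification. Since $\mathcal{C}_A$ is semisimple and unitary, any dominant monoidal functor from a $\dagger$-category factors through $\overline{\mathcal{SD}^-_N}$ and becomes faithful, provided the target has simple unit (which it does). Thus we obtain a faithful dominant monoidal functor $\overline{\mathcal{SD}^-_N}\to \mathcal{C}_A$, and passing to Cauchy completions gives a dominant functor $\operatorname{Ab}(\overline{\mathcal{SD}^-_N})\to \mathcal{C}_A$.

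Finally, I would import the conclusion-drawing step used in Theorem~\ref{thm:classN} essentially verbatim. By Theorem~\ref{thm:liu} we have $\operatorname{Ab}(\overline{\mathcal{SD}^-_N})\simeq \mathcal{C}(\mathfrak{sl}_N,N-2)_{A_{\mathfrak{sl}_{N(N-1)/2}}}$. Following the argument in \cite[Theorem 4.17]{me}, a faithful dominant monoidal functor $\mathcal{C}_B\to \mathcal{C}_A$ between categories of modules (over $\mathcal{C}$) for \'etale algebras $B$ and $A$ with $B$ a $\mathcal{C}$-subalgebra of $A$ forces $B\subseteq A$ as an \'etale extension; thus $A_{\mathfrak{sl}_{N(N-1)/2}}$ embeds as an \'etale subalgebra of $A$. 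Applying the bijection of \cite[Section 3.6]{LagrangeUkraine}, $A$ corresponds to an \'etale algebra in $\mathcal{C}(\mathfrak{sl}_N,N-2)_{A_{\mathfrak{sl}_{N(N-1)/2}}}^0\cong \mathcal{C}(\mathfrak{sl}_{N(N-1)/2},1)$, a pointed modular category. Every \'etale algebra in a pointed category is pointed, so $A$ is a simple current extension of $A_{\mathfrak{sl}_{N(N-1)/2}}$.

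I do not expect any genuine obstacle here: the entire ``hard case'' was the Case i) contradiction already handled in the excerpt, and unlike in the $k=N$ situation there is no messy convolution computation because $R$ is invertible (the three-strand analysis collapses to a single half-braid). The only place requiring care is making sure all signs and powers of $q$ match the $\mathcal{SD}^-_N$ conventions (as opposed to $\mathcal{SD}^+_N$), which is why we fix the absorption eigenvalue as $-q^{-1}$ and use the $k=N-2$ specialization $q^N=iq$ in evaluating the trace.
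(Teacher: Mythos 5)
Your proposal matches the paper's proof essentially step for step: the functor is defined by sending the extra generator of $\mathcal{SD}^-_N$ to the minimal central projection $r$ onto the invertible object $R$, the defining relations are verified against exactly the Case ii) facts (invertibility of $R$, braid absorption at $-q^{-1}$, the half-braid identity, and the partial trace value), unitarity of $\mathcal{C}_A$ gives the descent to the semisimplification, and the final identification of $A$ as a simple current extension is imported verbatim from the $k=N$ argument via Theorem~\ref{thm:liu} and the bijection of \cite[Section 3.6]{LagrangeUkraine}. No substantive difference from the paper's argument.
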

\begin{proof}
    We define the functor $\mathcal{F}$ which sends $+\mapsto \mathcal{F}_A(\ydiagram{1})$, $-\mapsto \mathcal{F}_A(\ydiagram{1}^*)$, and
    \begin{align*}
        \att{b1}{.25}&\mapsto \att{b1}{.25}\\
        \att{kw}{.3}&\mapsto \att{kw}{.3}\\
        \att{splittingLiu}{.25}& \mapsto  \att{rProj}{.5}.
    \end{align*}
It follows from the above discussion that this functor preserves the defining relations of $\mathcal{SD}^-_N$, and hence is monoidal. As $\mathcal{C}_A$ is unitary, this functor descends to a faithful dominant monoidal functor as in the statement of the theorem.
Showing that $A$ is a simple current extension of $A_{\mathfrak{sl}_{N(N-1)/2}}$ is now essentially a verbatim copy of the final paragraph of the proof of Theorem~\ref{thm:classN}.
\end{proof}

As described at the start of this section, the case of $k= N+2$ is almost identical. Attached to the arXiv submission of this paper is the $16\times16$ matrix of inner products for the set $S$ in this case, and a derivation of the analogous contradiction for case i). For case ii) the argument is essentially verbatim. This gives the following theorem.

\begin{thm}\label{thm:classN+2}
    Let $A\in \mathcal{C}$ be 1-super-transitive, and let $k= N+2$. Then there exists a faithful dominant monoidal functor
    \[   \overline{ \mathcal{SD}^+_N } \to \mathcal{C}_A                 \]
    In particular, $A$ is a simple current extension of $A_{\mathfrak{sl}_{N(N+1)/2}}$.
\end{thm}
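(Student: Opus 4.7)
The plan is to mirror the treatment of $k = N-2$ in Theorem~\ref{thm:classN-2} almost verbatim, with the replacements dictated by level-rank duality: every Young diagram is transposed, and on the relevant depth-2 projection the braid eigenvalue $-q^{-1}$ is replaced by $q$. By Theorem~\ref{thm:graph3} the fusion graph of $\ydiagram{1} \in \mathcal{C}_A$ up to depth 3 is either (i) or (ii), and these two cases are handled separately.

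For case (ii), let $r'$ denote the minimal central projection onto $R'$ in $\End_{\mathcal{C}_A}(\ydiagram{1}^{\otimes 2})$. Since $R'$ is a subprojection of $p_{\ydiagram{2}}$, the braid absorbs $r'$ at a cost of $q$, and Lemma~\ref{lem:OB} gives that the under-braid commutes with $r'$. The fusion graph together with Lemma~\ref{lem:com} forces $\dim\Hom_{\mathcal{C}_A}(R' \otimes \ydiagram{1} \to \ydiagram{1} \otimes R') = 1$, so there is a scalar $\alpha'$ for which the obvious depth-2 composition equals $\alpha'$ times the half-braid. Repeating the partial-trace and double-half-braid manipulation from case (ii) of Subsection~\ref{sec:kN2} verbatim gives $\dim(R') = 1$, so $R'$ is invertible. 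The projection $r'$ then satisfies the trace normalisation $\tfrac{q-q^{-1}}{i(q+q^{-1})}$ and the absorption relation $Br' = q\, r'$, which are precisely the defining relations of the extra generator of $\mathcal{SD}^+_N$ (in the $+q^{+1}$ sign convention). Mapping the Kazhdan-Wenzl generators to themselves and the $\mathcal{SD}^+_N$ generator to $r'$ produces a monoidal functor $\mathcal{SD}^+_N \to \mathcal{C}_A$; unitarity of $\mathcal{C}_A$ makes it descend to a faithful dominant functor $\overline{\mathcal{SD}^+_N} \to \mathcal{C}_A$, and the final paragraph of the proof of Theorem~\ref{thm:classN} (invoking Theorem~\ref{thm:liu} and \cite[Theorem 1.1]{papi}) then identifies $A$ as a simple current extension of $A_{\mathfrak{sl}_{N(N+1)/2}}$.

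For case (i), the plan is to derive a contradiction by repeating the convolution and three-box analysis. I would expand $r' \bowtie r'$ in the basis $\{p_{\ydiagram{2}}, p_{\ydiagram{1,1}}, r'\}$, pin down the two constrained structure constants by two explicit partial-trace computations, and then assemble the 16-element analogue of the set $S$ inside $\End_{\mathcal{C}_A}(\ydiagram{1}^{\otimes 3})$. Since $\dim\End_{\mathcal{C}_A}(\ydiagram{1}^{\otimes 3}) = 15$ by Theorem~\ref{thm:graph3}, the $16 \times 16$ Gram matrix of $S$ is singular, which forces the remaining free structure constant to take one of two explicit values $\gamma'_\pm$ and simultaneously expresses the ``double-$r'$'' diagram in the basis of the first fifteen elements of $S$. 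The fusion-graph constraint $\dim\Hom_{\mathcal{C}_A}(R' \otimes \ydiagram{1} \to \ydiagram{1,1} \otimes \ydiagram{1}) = 1$, together with the explicit matrix action of the braid on this basis, then yields a scalar equation forcing $q = 0$, the desired contradiction. The main obstacle is purely bookkeeping: every step has an essentially verbatim analogue in the $k = N-2$ case, and the real challenge is to carefully track the sign/exponent replacements coming from level-rank duality throughout a 15- or 16-dimensional linear algebra computation. The full $16 \times 16$ inner product matrix and the resulting contradiction derivation are too large to typeset cleanly and are supplied in the companion Mathematica file attached to the arXiv submission.
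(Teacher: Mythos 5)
Your proposal follows the paper's own route essentially verbatim: the paper proves the $k=N-2$ case in detail and disposes of $k=N+2$ by exactly the level-rank substitutions you describe (transpose all Young diagrams, replace the braid eigenvalue $-q^{-1}$ on the distinguished depth-2 projection by $q$, rule out fusion graph (i) via the singular $16\times 16$ Gram matrix in the companion file, and in graph (ii) show $R'$ is invertible so that $r'$ satisfies the defining relations of $\mathcal{SD}^+_N$). The details you supply — $R'$ sitting under $p_{\,\ydiagram{2}}$, the trace normalisation, and the final appeal to Theorem~\ref{thm:liu} together with the classification of \'etale subalgebras — all match the intended argument.
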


\bibliography{rigid}

\begin{thebibliography}{KMFPX15}

\bibitem[AP95]{MR1328736}
Henning~Haahr Andersen and Jan Paradowski.
\newblock Fusion categories arising from semisimple {L}ie algebras.
\newblock {\em Comm. Math. Phys.}, 169(3):563--588, 1995.

\bibitem[BK01]{SisiBakBak}
Bojko Bakalov and Alexander Kirillov, Jr.
\newblock {\em Lectures on tensor categories and modular functors}, volume~21 of {\em University Lecture Series}.
\newblock American Mathematical Society, Providence, RI, 2001.

\bibitem[CEM23]{dan}
Daniel Copeland and Cain Edie-Michell.
\newblock Cell systems for {$\overline{\operatorname{Rep}(U_q(\mathfrak{sl}_N))}$} module categories, 2023.

\bibitem[CGGH23]{MR4616673}
Sebastiano Carpi, Tiziano Gaudio, Luca Giorgetti, and Robin Hillier.
\newblock Haploid algebras in {$C^*$}-tensor categories and the {S}chellekens list.
\newblock {\em Comm. Math. Phys.}, 402(1):169--212, 2023.

\bibitem[DLM96]{simplesyrup}
Chongying Dong, Haisheng Li, and Geoffrey Mason.
\newblock Simple currents and extensions of vertex operator algebras.
\newblock {\em Comm. Math. Phys.}, 180(3):671--707, 1996.

\bibitem[DMNO13]{LagrangeUkraine}
Alexei Davydov, Michael M\"{u}ger, Dmitri Nikshych, and Victor Ostrik.
\newblock The {W}itt group of non-degenerate braided fusion categories.
\newblock {\em J. Reine Angew. Math.}, 677:135--177, 2013.

\bibitem[EGNO15]{book}
Pavel Etingof, Shlomo Gelaki, Dmitri Nikshych, and Victor Ostrik.
\newblock {\em Tensor categories}, volume 205 of {\em Mathematical Surveys and Monographs}.
\newblock American Mathematical Society, Providence, RI, 2015.

\bibitem[EMG25]{ModulesPt2}
Cain Edie-Michell and Terry Gannon.
\newblock Type ${II}$ quantum subgroups of $\mathfrak{sl}_{N}$ ${II}$: Classification, 2025.

\bibitem[EMS25]{me}
Cain Edie-Michell and Noah Snyder.
\newblock Interpolation categories for conformal embeddings, 2025.

\bibitem[EO22]{simp}
Pavel Etingof and Victor Ostrik.
\newblock On semisimplification of tensor categories.
\newblock In {\em Representation theory and algebraic geometry---a conference celebrating the birthdays of {S}asha {B}eilinson and {V}ictor {G}inzburg}, Trends Math., pages 3--35. Birkh\"{a}user/Springer, Cham, [2022] \copyright 2022.

\bibitem[Fin96]{MR1384612}
Micheal Finkelberg.
\newblock An equivalence of fusion categories.
\newblock {\em Geom. Funct. Anal.}, 6(2):249--267, 1996.

\bibitem[FZ92]{WZW}
Igor~B. Frenkel and Yongchang Zhu.
\newblock Vertex operator algebras associated to representations of affine and {V}irasoro algebras.
\newblock {\em Duke Math. J.}, 66(1):123--168, 1992.

\bibitem[Gan23]{LevelTerry}
Terry Gannon.
\newblock Exotic quantum subgroups and extensions of affine lie algebra {VOA}s – part {I}, 2023.
\newblock \arxiv{2301.07287}.

\bibitem[GJ07]{sup}
Pinhas Grossman and Vaughan F.~R. Jones.
\newblock Intermediate subfactors with no extra structure.
\newblock {\em J. Amer. Math. Soc.}, 20(1):219--265, 2007.

\bibitem[HKL15]{HKL}
Yi-Zhi Huang, Alexander Kirillov, Jr., and James Lepowsky.
\newblock Braided tensor categories and extensions of vertex operator algebras.
\newblock {\em Comm. Math. Phys.}, 337(3):1143--1159, 2015.

\bibitem[Jon12]{sup2}
Vaughan F.~R. Jones.
\newblock Quadratic tangles in planar algebras.
\newblock {\em Duke Math. J.}, 161(12):2257--2295, 2012.

\bibitem[KL93]{KL1}
D.~Kazhdan and G.~Lusztig.
\newblock Tensor structures arising from affine {L}ie algebras. {I}, {II}.
\newblock {\em J. Amer. Math. Soc.}, 6(4):905--947, 949--1011, 1993.

\bibitem[KMFPX15]{papi}
Victor~G. Kac, Pierluigi M\"{o}seneder~Frajria, Paolo Papi, and Feng Xu.
\newblock Conformal embeddings and simple current extensions.
\newblock {\em Int. Math. Res. Not. IMRN}, 2015(14):5229--5288, 2015.

\bibitem[KO02]{kril}
Alexander Kirillov, Jr. and Viktor Ostrik.
\newblock On a {$q$}-analogue of the {M}c{K}ay correspondence and the {ADE} classification of {$\mathfrak {sl}_2$} conformal field theories.
\newblock {\em Adv. Math.}, 171(2):183--227, 2002.

\bibitem[KW88]{KacMan}
Victor~G. Kac and Minoru Wakimoto.
\newblock Modular and conformal invariance constraints in representation theory of affine algebras.
\newblock {\em Adv. in Math.}, 70(2):156--236, 1988.

\bibitem[KW93]{sovietHans}
David Kazhdan and Hans Wenzl.
\newblock Reconstructing monoidal categories.
\newblock In {\em I. {M}. {G}elfand {S}eminar}, volume~16 of {\em Adv. Soviet Math.}, pages 111--136. Amer. Math. Soc., Providence, RI, 1993.

\bibitem[Liu15]{LiuYB}
Zhengwei Liu.
\newblock Yang-{B}axter relation planar algebras, 2015.
\newblock \arxiv{1507.06030}.

\bibitem[LL95]{LevLaughLib}
F.~Levstein and J.~I. Liberati.
\newblock Branching rules for conformal embeddings.
\newblock {\em Comm. Math. Phys.}, 173(1):1--16, 1995.

\bibitem[Nik25]{iwenttodmitrisofficeandaskedhim}
Dmitri Nikshych.
\newblock Private communication, 2025.

\bibitem[OS14]{lrdual}
Victor Ostrik and Michael Sun.
\newblock Level-rank duality via tensor categories.
\newblock {\em Comm. Math. Phys.}, 326(1):49--61, 2014.

\bibitem[Pag13]{Paggo}
Pablo~Gonzalez Pagotto.
\newblock Graphical calculus for the hecke algebra, 2013.
\newblock \href{https://alistairsavage.ca/pubs/Pagotto-Hecke-graphical-calculus.pdf}{https://alistairsavage.ca/pubs/Pagotto-Hecke-graphical-calculus.pdf}.

\bibitem[Saw06]{sawin}
Stephen~F. Sawin.
\newblock Quantum groups at roots of unity and modularity.
\newblock {\em J. Knot Theory Ramifications}, 15(10):1245--1277, 2006.

\bibitem[Sch18]{LevelAndy}
Andrew Schopieray.
\newblock Level bounds for exceptional quantum subgroups in rank two.
\newblock {\em Internat. J. Math.}, 29(5):1850034, 33, 2018.

\bibitem[Xu98]{Xu}
Feng Xu.
\newblock New braided endomorphisms from conformal inclusions.
\newblock {\em Comm. Math. Phys.}, 192(2):349--403, 1998.

\bibitem[Xu07]{Mirror}
Feng Xu.
\newblock Mirror extensions of local nets.
\newblock {\em Comm. Math. Phys.}, 270(3):835--847, 2007.

\end{thebibliography}
\bibliographystyle{alpha}
\appendix

\section{Matrix Representations}\label{app:A}

Let $k = N$, and $A\in \mathcal{C}$ a 1-super-transitive \`etale algebra object. In this appendix we list the matrices for the four elements 
\[   \att{mrep0}{.5},\qquad  \att{mrep-1}{.5} , \qquad \att{mrep1}{.5} \quad \text{ and } \att{mrep2}{.5}\]
acting via pre-composition on $\operatorname{End}_{\mathcal{C}_A}(\ydiagram{1}^{\otimes 3})$ with respect to the basis $S$ from Definition~\ref{def:knBas}. For the two braid elements we have that the matrices are respectively (where we write $z = q - q^{-1}$):
\[\left[
\begin{array}{cccccccccccccccccccccccc}
 0 & 1 & 0 & 0 & 0 & 0 & 0 & 0 & 0 & 0 & 0 & 0 & 0 & 0 & 0 & 0 & 0 & 0 & 0 & 0 & 0 & 0 & 0 & 0 \\
 1 & z& 0 & 0 & 0 & 0 & 0 & 0 & 0 & 0 & 0 & 0 & 0 & 0 & 0 & 0 & 0 & 0 & 0 & 0 & 0 & 0 & 0 & 0 \\
 0 & 0 & 0 & 0 & 1 & 0 & 0 & 0 & 0 & 0 & 0 & 0 & 0 & 0 & 0 & 0 & 0 & 0 & 0 & 0 & 0 & 0 & 0 & 0 \\
 0 & 0 & 0 & 0 & 0 & 1 & 0 & 0 & 0 & 0 & 0 & 0 & 0 & 0 & 0 & 0 & 0 & 0 & 0 & 0 & 0 & 0 & 0 & 0 \\
 0 & 0 & 1 & 0 & z & 0 & 0 & 0 & 0 & 0 & 0 & 0 & 0 & 0 & 0 & 0 & 0 & 0 & 0 & 0 & 0 & 0 & 0 & 0 \\
 0 & 0 & 0 & 1 & 0 & z& 0 & 0 & 0 & 0 & 0 & 0 & 0 & 0 & 0 & 0 & 0 & 0 & 0 & 0 & 0 & 0 & 0 & 0 \\
 0 & 0 & 0 & 0 & 0 & 0 & q & 0 & 0 & 0 & 0 & 0 & 0 & 0 & 0 & 0 & 0 & 0 & 0 & 0 & 0 & 0 & 0 & 0 \\
 0 & 0 & 0 & 0 & 0 & 0 & 0 & -\frac{1}{q} & 0 & 0 & 0 & 0 & 0 & 0 & 0 & 0 & 0 & 0 & 0 & 0 & 0 & 0 & 0 & 0 \\
 0 & 0 & 0 & 0 & 0 & 0 & 0 & 0 & 0 & 0 & 0 & 0 & 0 & 0 & 1 & 0 & 0 & 0 & 0 & 0 & 0 & 0 & 0 & 0 \\
 0 & 0 & 0 & 0 & 0 & 0 & 0 & 0 & 0 & 0 & 0 & 0 & 0 & 0 & 0 & 1 & 0 & 0 & 0 & 0 & 0 & 0 & 0 & 0 \\
 0 & 0 & 0 & 0 & 0 & 0 & 0 & 0 & 0 & 0 & q & 0 & 0 & 0 & 0 & 0 & 0 & 0 & 0 & 0 & 0 & 0 & 0 & 0 \\
 0 & 0 & 0 & 0 & 0 & 0 & 0 & 0 & 0 & 0 & 0 & -\frac{1}{q} & 0 & 0 & 0 & 0 & 0 & 0 & 0 & 0 & 0 & 0 & 0 & 0 \\
 0 & 0 & 0 & 0 & 0 & 0 & 0 & 0 & 0 & 0 & 0 & 0 & z & 0 & 0 & 0 & 0 & 0 & 1 & 0 & 0 & 0 & 0 & 0 \\
 0 & 0 & 0 & 0 & 0 & 0 & 0 & 0 & 0 & 0 & 0 & 0 & 0 & z & 0 & 0 & 0 & 0 & 0 & 1 & 0 & 0 & 0 & 0 \\
 0 & 0 & 0 & 0 & 0 & 0 & 0 & 0 & 1 & 0 & 0 & 0 & 0 & 0 & z & 0 & 0 & 0 & 0 & 0 & 0 & 0 & 0 & 0 \\
 0 & 0 & 0 & 0 & 0 & 0 & 0 & 0 & 0 & 1 & 0 & 0 & 0 & 0 & 0 & z & 0 & 0 & 0 & 0 & 0 & 0 & 0 & 0 \\
 0 & 0 & 0 & 0 & 0 & 0 & 0 & 0 & 0 & 0 & 0 & 0 & 0 & 0 & 0 & 0 & 0 & 0 & 0 & 0 & 0 & 0 & 1 & 0 \\
 0 & 0 & 0 & 0 & 0 & 0 & 0 & 0 & 0 & 0 & 0 & 0 & 0 & 0 & 0 & 0 & 0 & 0 & 0 & 0 & 0 & 0 & 0 & 1 \\
 0 & 0 & 0 & 0 & 0 & 0 & 0 & 0 & 0 & 0 & 0 & 0 & 1 & 0 & 0 & 0 & 0 & 0 & 0 & 0 & 0 & 0 & 0 & 0 \\
 0 & 0 & 0 & 0 & 0 & 0 & 0 & 0 & 0 & 0 & 0 & 0 & 0 & 1 & 0 & 0 & 0 & 0 & 0 & 0 & 0 & 0 & 0 & 0 \\
 0 & 0 & 0 & 0 & 0 & 0 & 0 & 0 & 0 & 0 & 0 & 0 & 0 & 0 & 0 & 0 & 0 & 0 & 0 & 0 & q & 0 & 0 & 0 \\
 0 & 0 & 0 & 0 & 0 & 0 & 0 & 0 & 0 & 0 & 0 & 0 & 0 & 0 & 0 & 0 & 0 & 0 & 0 & 0 & 0 & -\frac{1}{q} & 0 & 0 \\
 0 & 0 & 0 & 0 & 0 & 0 & 0 & 0 & 0 & 0 & 0 & 0 & 0 & 0 & 0 & 0 & 1 & 0 & 0 & 0 & 0 & 0 & z & 0 \\
 0 & 0 & 0 & 0 & 0 & 0 & 0 & 0 & 0 & 0 & 0 & 0 & 0 & 0 & 0 & 0 & 0 & 1 & 0 & 0 & 0 & 0 & 0 & z \\
\end{array}
\right]\]

\[ \left[
\begin{array}{cccccccccccccccccccccccc}
 0 & 0 & 1 & 0 & 0 & 0 & 0 & 0 & 0 & 0 & 0 & 0 & 0 & 0 & 0 & 0 & 0 & 0 & 0 & 0 & 0 & 0 & 0 & 0 \\
 0 & 0 & 0 & 1 & 0 & 0 & 0 & 0 & 0 & 0 & 0 & 0 & 0 & 0 & 0 & 0 & 0 & 0 & 0 & 0 & 0 & 0 & 0 & 0 \\
 1 & 0 & z & 0 & 0 & 0 & 0 & 0 & 0 & 0 & 0 & 0 & 0 & 0 & 0 & 0 & 0 & 0 & 0 & 0 & 0 & 0 & 0 & 0 \\
 0 & 1 & 0 & z & 0 & 0 & 0 & 0 & 0 & 0 & 0 & 0 & 0 & 0 & 0 & 0 & 0 & 0 & 0 & 0 & 0 & 0 & 0 & 0 \\
 0 & 0 & 0 & 0 & 0 & 1 & 0 & 0 & 0 & 0 & 0 & 0 & 0 & 0 & 0 & 0 & 0 & 0 & 0 & 0 & 0 & 0 & 0 & 0 \\
 0 & 0 & 0 & 0 & 1 & z & 0 & 0 & 0 & 0 & 0 & 0 & 0 & 0 & 0 & 0 & 0 & 0 & 0 & 0 & 0 & 0 & 0 & 0 \\
 0 & 0 & 0 & 0 & 0 & 0 & 0 & 0 & 0 & 0 & 0 & 0 & 0 & 0 & 0 & 0 & 1 & 0 & -z & 0 & 0 & 0 & 0 & 0 \\
 0 & 0 & 0 & 0 & 0 & 0 & 0 & 0 & 0 & 0 & 0 & 0 & 0 & 0 & 0 & 0 & 0 & 1 & 0 &-z & 0 & 0 & 0 & 0 \\
 0 & 0 & 0 & 0 & 0 & 0 & 0 & 0 & q & 0 & 0 & 0 & 0 & 0 & \frac{z}{q} & 0 & 0 & 0 & 0 & 0 & -z & 0 & 0 & 0 \\
 0 & 0 & 0 & 0 & 0 & 0 & 0 & 0 & 0 & -\frac{1}{q} & 0 & 0 & 0 & 0 & 0 & -q z & 0 & 0 & 0 & 0 & 0 & -z & 0 & 0 \\
 0 & 0 & 0 & 0 & 0 & 0 & 0 & 0 & 0 & 0 & 0 & 0 & 0 & 0 & 0 & 0 & 0 & 0 & 1 & 0 & 0 & 0 & 0 & 0 \\
 0 & 0 & 0 & 0 & 0 & 0 & 0 & 0 & 0 & 0 & 0 & 0 & 0 & 0 & 0 & 0 & 0 & 0 & 0 & 1 & 0 & 0 & 0 & 0 \\
 0 & 0 & 0 & 0 & 0 & 0 & 0 & 0 & 0 & 0 & 0 & 0 & q & 0 & 0 & 0 & 0 & 0 & 0 & 0 & 0 & 0 & 0 & 0 \\
 0 & 0 & 0 & 0 & 0 & 0 & 0 & 0 & 0 & 0 & 0 & 0 & 0 & -\frac{1}{q} & 0 & 0 & 0 & 0 & 0 & 0 & 0 & 0 & 0 & 0 \\
 0 & 0 & 0 & 0 & 0 & 0 & 0 & 0 & 0 & 0 & 0 & 0 & 0 & 0 & z & 0 & 0 & 0 & 0 & 0 & 1 & 0 & 0 & 0 \\
 0 & 0 & 0 & 0 & 0 & 0 & 0 & 0 & 0 & 0 & 0 & 0 & 0 & 0 & 0 & z & 0 & 0 & 0 & 0 & 0 & 1 & 0 & 0 \\
 0 & 0 & 0 & 0 & 0 & 0 & 1 & 0 & 0 & 0 & z & 0 & 0 & 0 & 0 & 0 & z & 0 & 0 & 0 & 0 & 0 & 0 & 0 \\
 0 & 0 & 0 & 0 & 0 & 0 & 0 & 1 & 0 & 0 & 0 & z & 0 & 0 & 0 & 0 & 0 &z & 0 & 0 & 0 & 0 & 0 & 0 \\
 0 & 0 & 0 & 0 & 0 & 0 & 0 & 0 & 0 & 0 & 1 & 0 & 0 & 0 & 0 & 0 & 0 & 0 & z & 0 & 0 & 0 & 0 & 0 \\
 0 & 0 & 0 & 0 & 0 & 0 & 0 & 0 & 0 & 0 & 0 & 1 & 0 & 0 & 0 & 0 & 0 & 0 & 0 & z & 0 & 0 & 0 & 0 \\
 0 & 0 & 0 & 0 & 0 & 0 & 0 & 0 & 0 & 0 & 0 & 0 & 0 & 0 & 1 & 0 & 0 & 0 & 0 & 0 & 0 & 0 & 0 & 0 \\
 0 & 0 & 0 & 0 & 0 & 0 & 0 & 0 & 0 & 0 & 0 & 0 & 0 & 0 & 0 & 1 & 0 & 0 & 0 & 0 & 0 & 0 & 0 & 0 \\
 0 & 0 & 0 & 0 & 0 & 0 & 0 & 0 & 0 & 0 & 0 & 0 & 0 & 0 & 0 & 0 & 0 & 0 & 0 & 0 & 0 & 0 & q & 0 \\
 0 & 0 & 0 & 0 & 0 & 0 & 0 & 0 & 0 & 0 & 0 & 0 & 0 & 0 & 0 & 0 & 0 & 0 & 0 & 0 & 0 & 0 & 0 & -\frac{1}{q} \\
\end{array}
\right]\]

The matrix for $e_{1,2}$ we give in the block form
\[   \begin{bmatrix}
    A & B & C 
\end{bmatrix}  \]
where
\tiny

\[A=\left[
\begin{array}{ccccccccccccc}
 0 & 0 & 0 & 0 & 0 & 0 & 0 & \frac{1}{q^2+1} & 0 & \frac{2}{\left(q^2+1\right)^2}-\frac{1}{q^4+1} & 0 & 0 & 0 \\
 0 & 0 & 0 & 0 & 0 & 0 & 0 & \frac{q}{q^2+1} & 0 &  \frac{2q}{\left(q^2+1\right)^2}-\frac{q}{q^4+1} & 0 & 0 & 0 \\
 0 & 0 & 0 & 0 & 0 & 0 & 0 & 0 & 0 & \frac{2q}{\left(q^2+1\right)^2}-\frac{q}{q^4+1} & 0 & \frac{1}{q^2+1} & 0 \\
 0 & 0 & 0 & 0 & 0 & 0 & 0 & 0 & 0 & \frac{1-q^2}{q^6+q^4+q^2+1} & 0 & 0 & 0 \\
 0 & 0 & 0 & 0 & 0 & 0 & 0 & 0 & 0 & \frac{q^2 \left(q^2-1\right)^2}{\left(q^2+1\right)^2 \left(q^4+1\right)} & 0 & \frac{q}{q^2+1} & 0 \\
 0 & 0 & 0 & 0 & 0 & 0 & 0 & 0 & 0 & \frac{q-q^3}{q^6+q^4+q^2+1} & 0 & 0 & 0 \\
 1 & -\frac{1}{q} & 0 & 1-q^2 & 0 & q-\frac{1}{q} & 0 & 0 & -\frac{\gamma  q \left(q^2-1\right)^2}{2 \left(q^4-q^2+1\right)} & \frac{\left(q^2-1\right)^2 \overline{\gamma}}{2 \left(q^5-q^3+q\right)} & 0 & 0 & -\frac{\gamma  \left(q^2-2\right) \left(q^2-1\right)^2}{2 \left(q^4-q^2+1\right)} \\
 0 & 0 & 0 & 0 & 0 & 0 & 0 & 0 & 0 & 0 & 0 & 0 & 0 \\
 0 & 0 & 0 & 0 & 0 & 0 & 0 & 0 & \frac{\gamma  \left(q^2-1\right)^2}{2 \left(q^5-q^3+q\right)} & 0 & 0 & 0 & \frac{\gamma -\gamma  q^2}{2 q^4-2 q^2+2} \\
 0 & 0 & 0 & 0 & 0 & 0 & 0 & 0 & 0 & \frac{\gamma  \left(q^2-1\right)^2}{2 \left(q^5-q^3+q\right)} & 0 & 0 & \frac{1}{2} \lambda  \left(\frac{1}{q^2}-1\right) \\
 0 & 0 & 1 & q-\frac{1}{q} & -\frac{1}{q} & \frac{1}{q^2}-1 & 0 & 0 & \frac{\gamma  \left(q^2-1\right)^2}{2 \left(q^4-q^2+1\right)} & \frac{\left(q^2-1\right)^2 \overline{\gamma}}{2 \left(q^4-q^2+1\right)} & 0 & 0 & \frac{\gamma  \left(q^2-1\right)^3}{2 \left(q^5-q^3+q\right)} \\
 0 & 0 & 0 & 0 & 0 & 0 & 0 & 0 & 0 & 0 & 0 & 0 & 0 \\
 0 & 0 & 0 & 0 & 0 & 0 & 0 & 0 & -\frac{\gamma  q^2 \left(q^2-1\right)}{2 \left(q^4-q^2+1\right)} & -\frac{\left(q^2-1\right) \overline{\gamma}}{2 \left(q^4-q^2+1\right)} & 0 & 0 & 0 \\
 0 & 0 & 0 & 0 & 0 & 0 & 0 & 0 & -\frac{1}{2} \lambda  (q-1) (q+1) & \frac{\gamma -\gamma  q^2}{2 q^4-2 q^2+2} & 0 & 0 & -\frac{\lambda  \left(q^2-1\right)^2}{2 q} \\
 0 & 0 & 0 & 0 & 0 & 0 & 0 & 0 & \frac{\gamma  \left(q^2-1\right)^2}{2 \left(q^4-q^2+1\right)} & 0 & 0 & 0 & -\frac{\gamma  q \left(q^2-1\right)}{2 \left(q^4-q^2+1\right)} \\
 0 & 0 & 0 & 0 & 0 & 0 & 0 & 0 & 0 & \frac{\gamma  \left(q^2-1\right)^2}{2 \left(q^4-q^2+1\right)} & 0 & 0 & -\frac{\lambda  \left(q^2-1\right)}{2 q} \\
 0 & 0 & 0 & 0 & 0 & 0 & 0 & 0 & \frac{\gamma  \left(q^2-1\right)}{2 \left(q^4-q^2+1\right)} & -\frac{\left(q^2-1\right) \overline{\gamma}}{2 \left(q^4-q^2+1\right)} & 0 & 0 & -\frac{\gamma  \left(q^2-1\right)}{2 \left(q^5-q^3+q\right)} \\
 0 & 0 & 0 & 0 & 0 & 0 & 0 & 0 & \frac{\lambda  \left(q^2-1\right)}{2 q^2} & \frac{\gamma -\gamma  q^2}{2 q^4-2 q^2+2} & 0 & 0 & \frac{\lambda  \left(q^2-1\right)}{2 q} \\
 0 & 0 & 0 & 0 & 0 & 0 & 0 & 0 & -\frac{\gamma  q \left(q^2-1\right)}{2 \left(q^4-q^2+1\right)} & -\frac{\left(q^2-1\right) \overline{\gamma}}{2 \left(q^5-q^3+q\right)} & 0 & 0 & 0 \\
 0 & 0 & 0 & 0 & 0 & 0 & 0 & 0 & -\frac{\lambda  \left(q^2-1\right)}{2 q} & -\frac{\gamma  \left(q^2-1\right)}{2 \left(q^5-q^3+q\right)} & 0 & 0 & -\frac{\lambda  \left(q^2-1\right)^2}{2 q^2} \\
 0 & 0 & 0 & 1 & 0 & -\frac{1}{q} & 0 & 0 & 0 & 0 & 0 & 0 & \frac{\gamma  \left(q^2-1\right)^2}{2 \left(q^4-q^2+1\right)} \\
 0 & 0 & 0 & 0 & 0 & 0 & 0 & 0 & 0 & 0 & 0 & 0 & 0 \\
 0 & 0 & 0 & 0 & 0 & 0 & 0 & 0 & \frac{\gamma  q \left(q^2-1\right)}{2 \left(q^4-q^2+1\right)} & -\frac{q \left(q^2-1\right) \overline{\gamma}}{2 \left(q^4-q^2+1\right)} & 0 & 0 & \frac{\gamma -\gamma  q^2}{2 q^4-2 q^2+2} \\
 0 & 0 & 0 & 0 & 0 & 0 & 0 & 0 & \frac{\lambda  \left(q^2-1\right)}{2 q} & -\frac{\gamma  q \left(q^2-1\right)}{2 \left(q^4-q^2+1\right)} & 0 & 0 & \frac{1}{2} \lambda  \left(q^2-1\right) \\
\end{array}
\right]\]
\[B:= \left[
\begin{array}{ccccc}
 q \left(\frac{2}{\left(q^2+1\right)^2}-\frac{1}{q^4+1}\right) & 0 & -\frac{\left(q^2-1\right)^2}{\left(q^2+1\right)^2 \left(q^5+q\right)} & 0 & \frac{q \left(q^2-1\right)}{q^6+q^4+q^2+1} \\
 \frac{q^2 \left(q^2-1\right)^2}{\left(q^2+1\right)^2 \left(q^4+1\right)} & 0 & \frac{1}{q^4+1}-\frac{2}{\left(q^2+1\right)^2} & 0 & \frac{1}{q^2+1}-\frac{1}{q^4+1} \\
 \frac{1-q^2}{q^6+q^4+q^2+1} & 0 & \frac{1}{q^4+1}-\frac{2}{\left(q^2+1\right)^2} & 0 & \frac{2}{\left(q^2+1\right)^2}-\frac{1}{q^4+1} \\
 -\frac{\left(q^2-1\right)^2}{\left(q^2+1\right)^2 \left(q^5+q\right)} & 0 & \frac{q^2-1}{q^7+q^5+q^3+q} & 0 & q \left(\frac{2}{\left(q^2+1\right)^2}-\frac{1}{q^4+1}\right) \\
 \frac{q-q^3}{q^6+q^4+q^2+1} & 0 & q \left(\frac{1}{q^4+1}-\frac{2}{\left(q^2+1\right)^2}\right) & 0 & q \left(\frac{2}{\left(q^2+1\right)^2}-\frac{1}{q^4+1}\right) \\
 \frac{1}{q^4+1}-\frac{2}{\left(q^2+1\right)^2} & 0 & \frac{q^2-1}{q^6+q^4+q^2+1} & 0 & \frac{q^2 \left(q^2-1\right)^2}{\left(q^2+1\right)^2 \left(q^4+1\right)} \\
 -\frac{\left(q^2-1\right)^2 \overline{\gamma}}{2 q^2} & \frac{\gamma  \left(q^2-1\right)^2}{2 \left(q^4-q^2+1\right)} & -\frac{\left(q^2-1\right)^2 \overline{\gamma}}{2 \left(q^6-q^4+q^2\right)} & -\frac{\gamma  \left(q^2-1\right)^3}{2 \left(q^4-q^2+1\right)} & -\frac{\left(q^2-1\right)^3 \overline{\gamma}}{2 \left(q^4-q^2+1\right)} \\
 0 & 0 & 0 & 0 & 0 \\
 -\frac{\left(q^2-1\right) \overline{\gamma}}{2 \left(q^6-q^4+q^2\right)} & -\frac{\gamma  \left(q^2-1\right)^2}{2 \left(q^6-q^4+q^2\right)} & 0 & \frac{\gamma -\gamma  q^2}{2 q^4-2 q^2+2} & -\frac{\left(q^2-1\right) \overline{\gamma}}{2 \left(q^6-q^4+q^2\right)} \\
 \frac{\gamma -\gamma  q^2}{2 \left(q^6-q^4+q^2\right)} & 0 & -\frac{\gamma  \left(q^2-1\right)^2}{2 \left(q^6-q^4+q^2\right)} & \frac{1}{2} \lambda  \left(q^2-1\right) & \frac{\gamma  \left(q^2-1\right)}{2 \left(q^4-q^2+1\right)} \\
 \frac{\left(q^2-1\right)^3 \overline{\gamma}}{2 \left(q^5-q^3+q\right)} & -\frac{\gamma  \left(q^2-1\right)^2}{2 \left(q^5-q^3+q\right)} & -\frac{\left(q^2-1\right)^2 \overline{\gamma}}{2 \left(q^5-q^3+q\right)} & -\frac{\gamma  \left(q^2-1\right)^2}{2 \left(q^5-q^3+q\right)} & \frac{q \left(q^2-1\right)^2 \overline{\gamma}}{2 \left(q^4-q^2+1\right)} \\
 0 & 0 & 0 & 0 & 0 \\
 -\frac{\left(q^2-1\right)^2 \overline{\gamma}}{2 \left(q^5-q^3+q\right)} & \frac{\gamma  q \left(q^2-1\right)}{2 \left(q^4-q^2+1\right)} & \frac{\left(q^2-1\right) \overline{\gamma}}{2 \left(q^5-q^3+q\right)} & \frac{\gamma  q \left(q^2-1\right)}{2 \left(q^4-q^2+1\right)} & -\frac{q \left(q^2-1\right) \overline{\gamma}}{2 \left(q^4-q^2+1\right)} \\
 \frac{\gamma  \left(q^2-1\right)^3}{2 \left(q^5-q^3+q\right)} & \frac{\lambda  \left(q^2-1\right)}{2 q} & \frac{\gamma  \left(q^2-1\right)}{2 \left(q^5-q^3+q\right)} & -\frac{1}{2} \lambda  (q-1) q (q+1) & \frac{\gamma  q^3 \left(q^2-1\right)}{2 \left(q^4-q^2+1\right)} \\
 -\frac{\left(q^2-1\right) \overline{\gamma}}{2 \left(q^5-q^3+q\right)} & -\frac{\gamma  \left(q^2-1\right)^2}{2 \left(q^5-q^3+q\right)} & 0 & -\frac{\gamma  q \left(q^2-1\right)}{2 \left(q^4-q^2+1\right)} & -\frac{\left(q^2-1\right) \overline{\gamma}}{2 \left(q^5-q^3+q\right)} \\
 -\frac{\gamma  \left(q^2-1\right)}{2 \left(q^5-q^3+q\right)} & 0 & -\frac{\gamma  \left(q^2-1\right)^2}{2 \left(q^5-q^3+q\right)} & \frac{1}{2} \lambda  q \left(q^2-1\right) & \frac{\gamma  q \left(q^2-1\right)}{2 \left(q^4-q^2+1\right)} \\
 \frac{\left(q^2-1\right) \overline{\gamma}}{2 \left(q^5-q^3+q\right)} & -\frac{\gamma  \left(q^2-1\right)}{2 \left(q^5-q^3+q\right)} & \frac{\left(q^2-1\right) \overline{\gamma}}{2 \left(q^5-q^3+q\right)} & \frac{\gamma  \left(q^2-1\right)^2}{2 \left(q^5-q^3+q\right)} & 0 \\
 -\frac{\gamma  q \left(q^2-1\right)}{2 \left(q^4-q^2+1\right)} & -\frac{\lambda  \left(q^2-1\right)}{2 q^3} & \frac{\gamma  \left(q^2-1\right)}{2 \left(q^5-q^3+q\right)} & 0 & \frac{\gamma  \left(q^2-1\right)^2}{2 \left(q^5-q^3+q\right)} \\
 -\frac{\left(q^2-1\right)^2 \overline{\gamma}}{2 \left(q^6-q^4+q^2\right)} & \frac{\gamma  \left(q^2-1\right)}{2 \left(q^4-q^2+1\right)} & \frac{\left(q^2-1\right) \overline{\gamma}}{2 \left(q^6-q^4+q^2\right)} & \frac{\gamma  \left(q^2-1\right)}{2 \left(q^4-q^2+1\right)} & -\frac{\left(q^2-1\right) \overline{\gamma}}{2 \left(q^4-q^2+1\right)} \\
 \frac{\gamma  \left(q^2-1\right)^3}{2 \left(q^6-q^4+q^2\right)} & \frac{\lambda  \left(q^2-1\right)}{2 q^2} & \frac{\gamma  \left(q^2-1\right)}{2 \left(q^6-q^4+q^2\right)} & -\frac{1}{2} \lambda  \left(q^2-1\right) & \frac{\gamma  q^2 \left(q^2-1\right)}{2 \left(q^4-q^2+1\right)} \\
 \frac{\left(q^2-1\right)^2 \overline{\gamma}}{2 \left(q^4-q^2+1\right)} & 0 & 0 & \frac{\gamma  \left(q^2-1\right)^2}{2 \left(q^4-q^2+1\right)} & \frac{\left(q^2-1\right)^2 \overline{\gamma}}{2 \left(q^4-q^2+1\right)} \\
 0 & 0 & 0 & 0 & 0 \\
 \frac{\left(q^2-1\right) \overline{\gamma}}{2 \left(q^4-q^2+1\right)} & \frac{\gamma -\gamma  q^2}{2 q^4-2 q^2+2} & \frac{\left(q^2-1\right) \overline{\gamma}}{2 \left(q^4-q^2+1\right)} & \frac{\gamma  \left(q^2-1\right)^2}{2 \left(q^4-q^2+1\right)} & 0 \\
 -\frac{\gamma  q^2 \left(q^2-1\right)}{2 \left(q^4-q^2+1\right)} & \frac{1}{2} \lambda  \left(\frac{1}{q^2}-1\right) & \frac{\gamma  \left(q^2-1\right)}{2 \left(q^4-q^2+1\right)} & 0 & \frac{\gamma  \left(q^2-1\right)^2}{2 \left(q^4-q^2+1\right)} \\
\end{array}
\right]\]
\[C:= \left[
\begin{array}{cccccc}
 0 & -\frac{q^2 \left(q^2-1\right)^2}{\left(q^2+1\right)^2 \left(q^4+1\right)} & 0 & \frac{1}{q^2}-\frac{2}{q^2+1} & 0 & \frac{1-q^2}{q^6+q^4+q^2+1} \\
 0 & -\frac{q^3 \left(q^2-1\right)^2}{\left(q^2+1\right)^2 \left(q^4+1\right)} & 0 & \frac{1-q^2}{q^3+q} & 0 & \frac{q-q^3}{q^6+q^4+q^2+1} \\
 0 & \frac{q \left(q^2-1\right)}{q^6+q^4+q^2+1} & 0 & \frac{1-q^2}{q^3+q} & 0 & -\frac{\left(q^2-1\right)^2}{\left(q^2+1\right)^2 \left(q^5+q\right)} \\
 0 & \frac{2}{\left(q^2+1\right)^2}-\frac{1}{q^4+1} & 0 & \frac{1}{q^2+1} & 0 & \frac{1}{q^4+1}-\frac{2}{\left(q^2+1\right)^2} \\
 0 & \frac{1}{q^2+1}-\frac{1}{q^4+1} & 0 & \frac{2}{q^2+1}-1 & 0 & \frac{1}{q^4+1}-\frac{2}{\left(q^2+1\right)^2} \\
 0 & q \left(\frac{2}{\left(q^2+1\right)^2}-\frac{1}{q^4+1}\right) & 0 & \frac{q}{q^2+1} & 0 & q \left(\frac{1}{q^4+1}-\frac{2}{\left(q^2+1\right)^2}\right) \\
 \frac{\gamma  q \left(q^2-2\right) \left(q^2-1\right)^2}{2 \left(q^4-q^2+1\right)} & \frac{\left(q^2-1\right)^2 \overline{\gamma}}{2 q} & 0 & 0 & \frac{\gamma  \left(q^2-1\right)^3}{2 \left(q^5-q^3+q\right)} & \frac{\left(q^2-1\right)^3 \overline{\gamma}}{2 \left(q^5-q^3+q\right)} \\
 0 & 0 & 0 & 0 & 0 & 0 \\
 \frac{\gamma  q \left(q^2-1\right)}{2 \left(q^4-q^2+1\right)} & \frac{\left(q^2-1\right) \overline{\gamma}}{2 \left(q^5-q^3+q\right)} & 0 & 0 & \frac{\gamma  \left(q^2-1\right)}{2 \left(q^5-q^3+q\right)} & \frac{\left(q^2-1\right) \overline{\gamma}}{2 \left(q^7-q^5+q^3\right)} \\
 \frac{\lambda  \left(q^2-1\right)}{2 q} & \frac{\gamma  \left(q^2-1\right)}{2 \left(q^5-q^3+q\right)} & 0 & 0 & -\frac{\lambda  \left(q^2-1\right)}{2 q} & -\frac{\gamma  \left(q^2-1\right)}{2 \left(q^5-q^3+q\right)} \\
 -\frac{\gamma  \left(q^2-1\right)^3}{2 \left(q^4-q^2+1\right)} & -\frac{\left(q^2-1\right)^3 \overline{\gamma}}{2 \left(q^4-q^2+1\right)} & 0 & 0 & \frac{\gamma  \left(q^2-1\right)^2}{2 \left(q^6-q^4+q^2\right)} & -\frac{\left(q^2-1\right)^2 \overline{\gamma}}{2 \left(q^4-q^2+1\right)} \\
 0 & 0 & 0 & 0 & 0 & 0 \\
 0 & \frac{\left(q^2-1\right)^2 \overline{\gamma}}{2 \left(q^4-q^2+1\right)} & 0 & 0 & \frac{\gamma -\gamma  q^2}{2 q^4-2 q^2+2} & \frac{\left(q^2-1\right) \overline{\gamma}}{2 \left(q^4-q^2+1\right)} \\
 \frac{1}{2} \lambda  \left(q^2-1\right)^2 & -\frac{\gamma  \left(q^2-1\right)^3}{2 \left(q^4-q^2+1\right)} & 0 & 0 & \frac{1}{2} \lambda  \left(q^2-1\right) & -\frac{\gamma  q^2 \left(q^2-1\right)}{2 \left(q^4-q^2+1\right)} \\
 \frac{\gamma  q^2 \left(q^2-1\right)}{2 \left(q^4-q^2+1\right)} & \frac{\left(q^2-1\right) \overline{\gamma}}{2 \left(q^4-q^2+1\right)} & 0 & 0 & \frac{\gamma  \left(q^2-1\right)}{2 \left(q^4-q^2+1\right)} & \frac{\left(q^2-1\right) \overline{\gamma}}{2 \left(q^6-q^4+q^2\right)} \\
 \frac{1}{2} \lambda  \left(q^2-1\right) & \frac{\gamma  \left(q^2-1\right)}{2 \left(q^4-q^2+1\right)} & 0 & 0 & -\frac{1}{2} \lambda  \left(q^2-1\right) & \frac{\gamma -\gamma  q^2}{2 q^4-2 q^2+2} \\
 \frac{\gamma  \left(q^2-1\right)}{2 \left(q^4-q^2+1\right)} & -\frac{\left(q^2-1\right) \overline{\gamma}}{2 \left(q^4-q^2+1\right)} & 0 & 0 & -\frac{\gamma  \left(q^2-1\right)^2}{2 \left(q^6-q^4+q^2\right)} & 0 \\
 -\frac{1}{2} \lambda  \left(q^2-1\right) & \frac{\gamma  q^2 \left(q^2-1\right)}{2 \left(q^4-q^2+1\right)} & 0 & 0 & 0 & -\frac{\gamma  \left(q^2-1\right)^2}{2 \left(q^6-q^4+q^2\right)} \\
 0 & \frac{\left(q^2-1\right)^2 \overline{\gamma}}{2 \left(q^5-q^3+q\right)} & 0 & 0 & -\frac{\gamma  \left(q^2-1\right)}{2 \left(q^5-q^3+q\right)} & \frac{\left(q^2-1\right) \overline{\gamma}}{2 \left(q^5-q^3+q\right)} \\
 \frac{\lambda  \left(q^2-1\right)^2}{2 q} & -\frac{\gamma  \left(q^2-1\right)^3}{2 \left(q^5-q^3+q\right)} & 0 & 0 & \frac{\lambda  \left(q^2-1\right)}{2 q} & -\frac{\gamma  q \left(q^2-1\right)}{2 \left(q^4-q^2+1\right)} \\
 -\frac{\gamma  q \left(q^2-1\right)^2}{2 \left(q^4-q^2+1\right)} & -\frac{q \left(q^2-1\right)^2 \overline{\gamma}}{2 \left(q^4-q^2+1\right)} & 0 & 0 & -\frac{\gamma  \left(q^2-1\right)^2}{2 \left(q^5-q^3+q\right)} & -\frac{\left(q^2-1\right)^2 \overline{\gamma}}{2 \left(q^5-q^3+q\right)} \\
 0 & 0 & 0 & 0 & 0 & 0 \\
 \frac{\gamma  q \left(q^2-1\right)}{2 \left(q^4-q^2+1\right)} & -\frac{q \left(q^2-1\right) \overline{\gamma}}{2 \left(q^4-q^2+1\right)} & 0 & 0 & -\frac{\gamma  \left(q^2-1\right)^2}{2 \left(q^5-q^3+q\right)} & 0 \\
 -\frac{1}{2} \lambda  (q-1) q (q+1) & \frac{\gamma  q^3 \left(q^2-1\right)}{2 \left(q^4-q^2+1\right)} & 0 & 0 & 0 & -\frac{\gamma  \left(q^2-1\right)^2}{2 \left(q^5-q^3+q\right)} \\
\end{array}
\right]\]
\normalsize
In a similar fashion, we give the matrix for $e_{2,1}$ in the block form
\[   \begin{bmatrix}
    D &E& F
\end{bmatrix}  \]
where\tiny
\[D:= \left[
\begin{array}{ccccccccccccc}
 0 & 0 & 0 & 0 & 0 & 0 & \frac{1}{\frac{1}{q^2}+1} & 0 & \frac{q^4 \left(q^2-1\right)^2}{\left(q^2+1\right)^2 \left(q^4+1\right)} & 0 & 0 & 0 & -\frac{q^3 \left(q^2-1\right)^2}{\left(q^2+1\right)^2 \left(q^4+1\right)} \\
 0 & 0 & 0 & 0 & 0 & 0 & -\frac{q}{q^2+1} & 0 & -\frac{q^3 \left(q^2-1\right)^2}{\left(q^2+1\right)^2 \left(q^4+1\right)} & 0 & 0 & 0 & \frac{q^2 \left(q^2-1\right)^2}{\left(q^2+1\right)^2 \left(q^4+1\right)} \\
 0 & 0 & 0 & 0 & 0 & 0 & 0 & 0 & -\frac{q^3 \left(q^2-1\right)^2}{\left(q^2+1\right)^2 \left(q^4+1\right)} & 0 & \frac{1}{\frac{1}{q^2}+1} & 0 & \frac{q^4 \left(q^2-1\right)}{q^6+q^4+q^2+1} \\
 0 & 0 & 0 & 0 & 0 & 0 & 0 & 0 & \frac{q^4 \left(q^2-1\right)}{q^6+q^4+q^2+1} & 0 & 0 & 0 & \frac{q^5 \left(q^2-1\right)^2}{\left(q^2+1\right)^2 \left(q^4+1\right)} \\
 0 & 0 & 0 & 0 & 0 & 0 & 0 & 0 & \frac{q^2 \left(q^2-1\right)^2}{\left(q^2+1\right)^2 \left(q^4+1\right)} & 0 & -\frac{q}{q^2+1} & 0 & q \left(\frac{1}{q^4+1}-\frac{1}{q^2+1}\right) \\
 0 & 0 & 0 & 0 & 0 & 0 & 0 & 0 & q \left(\frac{1}{q^4+1}-\frac{1}{q^2+1}\right) & 0 & 0 & 0 & -\frac{q^4 \left(q^2-1\right)^2}{\left(q^2+1\right)^2 \left(q^4+1\right)} \\
 0 & 0 & 0 & 0 & 0 & 0 & 0 & 0 & 0 & 0 & 0 & 0 & 0 \\
 1 & q & 0 & 1-\frac{1}{q^2} & 0 & q-\frac{1}{q} & 0 & 0 & -\frac{q \left(q^2-1\right)^2 \overline{\gamma}}{2 \left(q^4-q^2+1\right)} & \frac{\left(q^2-1\right)^2 \overline{\gamma}}{2 \left(q^5-q^3+q\right)} & 0 & 0 & -\frac{\left(q^2-1\right)^2 \overline{\gamma}}{2 q^2} \\
 0 & 0 & 0 & 0 & 0 & 0 & 0 & 0 & -\frac{\gamma  q \left(q^2-1\right)^2}{2 \left(q^4-q^2+1\right)} & 0 & 0 & 0 & \frac{\gamma  q^4 \left(q^2-1\right)}{2 \left(q^4-q^2+1\right)} \\
 0 & 0 & 0 & 0 & 0 & 0 & 0 & 0 & 0 & -\frac{q \left(q^2-1\right)^2 \overline{\gamma}}{2 \left(q^4-q^2+1\right)} & 0 & 0 & \frac{q^4 \left(q^2-1\right) \overline{\gamma}}{2 \left(q^4-q^2+1\right)} \\
 0 & 0 & 0 & 0 & 0 & 0 & 0 & 0 & 0 & 0 & 0 & 0 & 0 \\
 0 & 0 & 1 & q-\frac{1}{q} & q & q^2-1 & 0 & 0 & \frac{\left(q^2-1\right)^2 \overline{\gamma}}{2 \left(q^4-q^2+1\right)} & \frac{\left(q^2-1\right)^2 \overline{\gamma}}{2 \left(q^4-q^2+1\right)} & 0 & 0 & \frac{\left(q^2-1\right)^3 \overline{\gamma}}{2 \left(q^5-q^3+q\right)} \\
 0 & 0 & 0 & 0 & 0 & 0 & 0 & 0 & \frac{\gamma  q^2 \left(q^2-1\right)}{2 \left(q^4-q^2+1\right)} & \frac{\left(q^2-1\right) \lambda}{2 q^2} & 0 & 0 & \frac{\gamma  \left(q^2-1\right)^3}{2 \left(q^5-q^3+q\right)} \\
 0 & 0 & 0 & 0 & 0 & 0 & 0 & 0 & \frac{q^2 \left(q^2-1\right) \overline{\gamma}}{2 \left(q^4-q^2+1\right)} & \frac{\left(q^2-1\right) \overline{\gamma}}{2 \left(q^4-q^2+1\right)} & 0 & 0 & \frac{q \left(q^2-1\right)^2 \overline{\gamma}}{2 \left(q^4-q^2+1\right)} \\
 0 & 0 & 0 & 0 & 0 & 0 & 0 & 0 & \frac{\gamma  \left(q^2-1\right)^2}{2 \left(q^4-q^2+1\right)} & 0 & 0 & 0 & -\frac{\gamma  q^3 \left(q^2-1\right)}{2 \left(q^4-q^2+1\right)} \\
 0 & 0 & 0 & 0 & 0 & 0 & 0 & 0 & 0 & \frac{\left(q^2-1\right)^2 \overline{\gamma}}{2 \left(q^4-q^2+1\right)} & 0 & 0 & -\frac{q^3 \left(q^2-1\right) \overline{\gamma}}{2 \left(q^4-q^2+1\right)} \\
 0 & 0 & 0 & 0 & 0 & 0 & 0 & 0 & \frac{\gamma  q^2 \left(q^2-1\right)}{2 \left(q^4-q^2+1\right)} & -\frac{1}{2} \left(q^2-1\right) \lambda & 0 & 0 & -\frac{\gamma  q \left(q^2-1\right)}{2 \left(q^4-q^2+1\right)} \\
 0 & 0 & 0 & 0 & 0 & 0 & 0 & 0 & \frac{q^2 \left(q^2-1\right) \overline{\gamma}}{2 \left(q^4-q^2+1\right)} & -\frac{q^2 \left(q^2-1\right) \overline{\gamma}}{2 \left(q^4-q^2+1\right)} & 0 & 0 & \frac{q^3 \left(q^2-1\right) \overline{\gamma}}{2 \left(q^4-q^2+1\right)} \\
 0 & 0 & 0 & 0 & 0 & 0 & 0 & 0 & -\frac{\gamma  q^3 \left(q^2-1\right)}{2 \left(q^4-q^2+1\right)} & -\frac{\left(q^2-1\right) \lambda}{2 q} & 0 & 0 & -\frac{\gamma  \left(q^2-1\right)^3}{2 \left(q^4-q^2+1\right)} \\
 0 & 0 & 0 & 0 & 0 & 0 & 0 & 0 & -\frac{q^3 \left(q^2-1\right) \overline{\gamma}}{2 \left(q^4-q^2+1\right)} & -\frac{q \left(q^2-1\right) \overline{\gamma}}{2 \left(q^4-q^2+1\right)} & 0 & 0 & -\frac{q^2 \left(q^2-1\right)^2 \overline{\gamma}}{2 \left(q^4-q^2+1\right)} \\
 0 & 0 & 0 & 0 & 0 & 0 & 0 & 0 & 0 & 0 & 0 & 0 & 0 \\
 0 & 0 & 0 & 1 & 0 & q & 0 & 0 & 0 & 0 & 0 & 0 & \frac{\left(q^2-1\right)^2 \overline{\gamma}}{2 \left(q^4-q^2+1\right)} \\
 0 & 0 & 0 & 0 & 0 & 0 & 0 & 0 & -\frac{\gamma  q \left(q^2-1\right)}{2 \left(q^4-q^2+1\right)} & \frac{\left(q^2-1\right) \lambda}{2 q} & 0 & 0 & \frac{\gamma  \left(q^2-1\right)}{2 \left(q^4-q^2+1\right)} \\
 0 & 0 & 0 & 0 & 0 & 0 & 0 & 0 & -\frac{q \left(q^2-1\right) \overline{\gamma}}{2 \left(q^4-q^2+1\right)} & \frac{q \left(q^2-1\right) \overline{\gamma}}{2 \left(q^4-q^2+1\right)} & 0 & 0 & -\frac{q^2 \left(q^2-1\right) \overline{\gamma}}{2 \left(q^4-q^2+1\right)} \\
\end{array}
\right]\]
\[E:= \left[
\begin{array}{ccccc}
 0 & \frac{q^5 \left(q^2-1\right)^2}{\left(q^2+1\right)^2 \left(q^4+1\right)} & 0 & q \left(\frac{1}{q^2+1}-\frac{1}{q^4+1}\right) & 0 \\
 0 & -\frac{q^4 \left(q^2-1\right)^2}{\left(q^2+1\right)^2 \left(q^4+1\right)} & 0 & \frac{1}{q^4+1}-\frac{1}{q^2+1} & 0 \\
 0 & -\frac{q^4 \left(q^2-1\right)^2}{\left(q^2+1\right)^2 \left(q^4+1\right)} & 0 & \frac{q^4 \left(q^2-1\right)^2}{\left(q^2+1\right)^2 \left(q^4+1\right)} & 0 \\
 0 & \frac{q^5 \left(q^2-1\right)}{q^6+q^4+q^2+1} & 0 & -\frac{q^3 \left(q^2-1\right)^2}{\left(q^2+1\right)^2 \left(q^4+1\right)} & 0 \\
 0 & \frac{q^3 \left(q^2-1\right)^2}{\left(q^2+1\right)^2 \left(q^4+1\right)} & 0 & -\frac{q^3 \left(q^2-1\right)^2}{\left(q^2+1\right)^2 \left(q^4+1\right)} & 0 \\
 0 & \frac{1}{q^2+1}+\frac{q^2}{q^4+1}-1 & 0 & \frac{q^2 \left(q^2-1\right)^2}{\left(q^2+1\right)^2 \left(q^4+1\right)} & 0 \\
 0 & 0 & 0 & 0 & 0 \\
 \frac{\left(q^2-1\right)^2 \left(2 q^2-1\right) \overline{\gamma}}{2 \left(q^6-q^4+q^2\right)} & -\frac{q^2 \left(q^2-1\right)^2 \overline{\gamma}}{2 \left(q^4-q^2+1\right)} & \frac{\left(q^2-1\right)^2 \overline{\gamma}}{2 \left(q^4-q^2+1\right)} & \frac{\left(q^2-1\right)^3 \overline{\gamma}}{2 \left(q^6-q^4+q^2\right)} & \frac{\left(q^2-1\right)^3 \overline{\gamma}}{2 \left(q^6-q^4+q^2\right)} \\
 \frac{1}{2} \left(q^2-1\right) \lambda & -\frac{\gamma  q^2 \left(q^2-1\right)^2}{2 \left(q^4-q^2+1\right)} & 0 & -\frac{\gamma  q^2 \left(q^2-1\right)}{2 \left(q^4-q^2+1\right)} & -\frac{\left(q^2-1\right) \lambda}{2 q^2} \\
 \frac{q^2 \left(q^2-1\right) \overline{\gamma}}{2 \left(q^4-q^2+1\right)} & 0 & -\frac{q^2 \left(q^2-1\right)^2 \overline{\gamma}}{2 \left(q^4-q^2+1\right)} & \frac{q^4 \left(q^2-1\right) \overline{\gamma}}{2 \left(q^4-q^2+1\right)} & \frac{q^2 \left(q^2-1\right) \overline{\gamma}}{2 \left(q^4-q^2+1\right)} \\
 0 & 0 & 0 & 0 & 0 \\
 \frac{\left(q^2-1\right)^3 \overline{\gamma}}{2 \left(q^5-q^3+q\right)} & \frac{q \left(q^2-1\right)^2 \overline{\gamma}}{2 \left(q^4-q^2+1\right)} & \frac{q \left(q^2-1\right)^2 \overline{\gamma}}{2 \left(q^4-q^2+1\right)} & -\frac{\left(q^2-1\right)^2 \overline{\gamma}}{2 \left(q^5-q^3+q\right)} & \frac{q \left(q^2-1\right)^2 \overline{\gamma}}{2 \left(q^4-q^2+1\right)} \\
 \frac{\left(q^2-1\right)^2 \lambda}{2 q^3} & \frac{\gamma  q^3 \left(q^2-1\right)}{2 \left(q^4-q^2+1\right)} & \frac{\left(q^2-1\right) \lambda}{2 q} & \frac{\gamma  \left(q^2-1\right)}{2 \left(q^5-q^3+q\right)} & -\frac{\left(q^2-1\right) \lambda}{2 q^3} \\
 0 & \frac{q^3 \left(q^2-1\right) \overline{\gamma}}{2 \left(q^4-q^2+1\right)} & \frac{q \left(q^2-1\right) \overline{\gamma}}{2 \left(q^4-q^2+1\right)} & -\frac{q \left(q^2-1\right) \overline{\gamma}}{2 \left(q^4-q^2+1\right)} & \frac{q \left(q^2-1\right) \overline{\gamma}}{2 \left(q^4-q^2+1\right)} \\
 -\frac{\left(q^2-1\right) \lambda}{2 q} & \frac{\gamma  q \left(q^2-1\right)^2}{2 \left(q^4-q^2+1\right)} & 0 & \frac{\gamma  q \left(q^2-1\right)}{2 \left(q^4-q^2+1\right)} & \frac{\left(q^2-1\right) \lambda}{2 q^3} \\
 -\frac{q \left(q^2-1\right) \overline{\gamma}}{2 \left(q^4-q^2+1\right)} & 0 & \frac{q \left(q^2-1\right)^2 \overline{\gamma}}{2 \left(q^4-q^2+1\right)} & -\frac{q^3 \left(q^2-1\right) \overline{\gamma}}{2 \left(q^4-q^2+1\right)} & -\frac{q \left(q^2-1\right) \overline{\gamma}}{2 \left(q^4-q^2+1\right)} \\
 \frac{\left(q^2-1\right) \lambda}{2 q} & \frac{\gamma  q^3 \left(q^2-1\right)}{2 \left(q^4-q^2+1\right)} & -\frac{1}{2} q \left(q^2-1\right) \lambda & -\frac{\gamma  q \left(q^2-1\right)^2}{2 \left(q^4-q^2+1\right)} & 0 \\
 -\frac{q^3 \left(q^2-1\right) \overline{\gamma}}{2 \left(q^4-q^2+1\right)} & \frac{q^3 \left(q^2-1\right) \overline{\gamma}}{2 \left(q^4-q^2+1\right)} & -\frac{q^3 \left(q^2-1\right) \overline{\gamma}}{2 \left(q^4-q^2+1\right)} & 0 & -\frac{q \left(q^2-1\right)^2 \overline{\gamma}}{2 \left(q^4-q^2+1\right)} \\
 -\frac{\left(q^2-1\right)^2 \lambda}{2 q^2} & -\frac{\gamma  q^4 \left(q^2-1\right)}{2 \left(q^4-q^2+1\right)} & -\frac{1}{2} \left(q^2-1\right) \lambda & \frac{\gamma -\gamma  q^2}{2 q^4-2 q^2+2} & \frac{\left(q^2-1\right) \lambda}{2 q^2} \\
 0 & -\frac{q^4 \left(q^2-1\right) \overline{\gamma}}{2 \left(q^4-q^2+1\right)} & -\frac{q^2 \left(q^2-1\right) \overline{\gamma}}{2 \left(q^4-q^2+1\right)} & \frac{q^2 \left(q^2-1\right) \overline{\gamma}}{2 \left(q^4-q^2+1\right)} & -\frac{q^2 \left(q^2-1\right) \overline{\gamma}}{2 \left(q^4-q^2+1\right)} \\
 0 & 0 & 0 & 0 & 0 \\
 \frac{\left(q^2-1\right)^2 \overline{\gamma}}{2 \left(q^4-q^2+1\right)} & 0 & 0 & \frac{\left(q^2-1\right)^2 \overline{\gamma}}{2 \left(q^4-q^2+1\right)} & \frac{\left(q^2-1\right)^2 \overline{\gamma}}{2 \left(q^4-q^2+1\right)} \\
 -\frac{\left(q^2-1\right) \lambda}{2 q^2} & -\frac{\gamma  q^2 \left(q^2-1\right)}{2 \left(q^4-q^2+1\right)} & \frac{1}{2} \left(q^2-1\right) \lambda & \frac{\gamma  \left(q^2-1\right)^2}{2 \left(q^4-q^2+1\right)} & 0 \\
 \frac{q^2 \left(q^2-1\right) \overline{\gamma}}{2 \left(q^4-q^2+1\right)} & -\frac{q^2 \left(q^2-1\right) \overline{\gamma}}{2 \left(q^4-q^2+1\right)} & \frac{q^2 \left(q^2-1\right) \overline{\gamma}}{2 \left(q^4-q^2+1\right)} & 0 & \frac{\left(q^2-1\right)^2 \overline{\gamma}}{2 \left(q^4-q^2+1\right)} \\
\end{array}
\right]\]
\[F:= \left[
\begin{array}{cccccc}
 -\frac{q^2 \left(q^2-1\right)^2}{\left(q^2+1\right)^2 \left(q^4+1\right)} & 0 & q^2+\frac{2}{q^2+1}-2 & 0 & \frac{q^4 \left(q^2-1\right)}{q^6+q^4+q^2+1} & 0 \\
 q \left(\frac{2}{\left(q^2+1\right)^2}-\frac{1}{q^4+1}\right) & 0 & q \left(\frac{2}{q^2+1}-1\right) & 0 & q \left(\frac{1}{q^4+1}-\frac{1}{q^2+1}\right) & 0 \\
 q \left(\frac{1}{q^2+1}-\frac{1}{q^4+1}\right) & 0 & q \left(\frac{2}{q^2+1}-1\right) & 0 & \frac{q^5 \left(q^2-1\right)^2}{\left(q^2+1\right)^2 \left(q^4+1\right)} & 0 \\
 \frac{q^4 \left(q^2-1\right)^2}{\left(q^2+1\right)^2 \left(q^4+1\right)} & 0 & \frac{1}{\frac{1}{q^2}+1} & 0 & -\frac{q^4 \left(q^2-1\right)^2}{\left(q^2+1\right)^2 \left(q^4+1\right)} & 0 \\
 \frac{1}{q^4+1}-\frac{1}{q^2+1} & 0 & 1-\frac{2}{q^2+1} & 0 & -\frac{q^4 \left(q^2-1\right)^2}{\left(q^2+1\right)^2 \left(q^4+1\right)} & 0 \\
 -\frac{q^3 \left(q^2-1\right)^2}{\left(q^2+1\right)^2 \left(q^4+1\right)} & 0 & -\frac{q}{q^2+1} & 0 & \frac{q^3 \left(q^2-1\right)^2}{\left(q^2+1\right)^2 \left(q^4+1\right)} & 0 \\
 0 & 0 & 0 & 0 & 0 & 0 \\
 -\frac{\left(q^2-1\right)^2 \overline{\gamma}}{2 q^3} & \frac{\left(q^2-1\right)^2 \left(2 q^2-1\right) \overline{\gamma}}{2 \left(q^7-q^5+q^3\right)} & 0 & 0 & \frac{\left(q^2-1\right)^3 \overline{\gamma}}{2 \left(q^5-q^3+q\right)} & \frac{\left(q^2-1\right)^3 \overline{\gamma}}{2 \left(q^5-q^3+q\right)} \\
 \frac{\gamma  q^3 \left(q^2-1\right)}{2 \left(q^4-q^2+1\right)} & \frac{\left(q^2-1\right) \lambda}{2 q} & 0 & 0 & -\frac{\gamma  q^3 \left(q^2-1\right)}{2 \left(q^4-q^2+1\right)} & -\frac{\left(q^2-1\right) \lambda}{2 q} \\
 \frac{q^3 \left(q^2-1\right) \overline{\gamma}}{2 \left(q^4-q^2+1\right)} & \frac{q \left(q^2-1\right) \overline{\gamma}}{2 \left(q^4-q^2+1\right)} & 0 & 0 & \frac{q^5 \left(q^2-1\right) \overline{\gamma}}{2 \left(q^4-q^2+1\right)} & \frac{q^3 \left(q^2-1\right) \overline{\gamma}}{2 \left(q^4-q^2+1\right)} \\
 0 & 0 & 0 & 0 & 0 & 0 \\
 \frac{\left(q^2-1\right)^3 \overline{\gamma}}{2 \left(q^6-q^4+q^2\right)} & \frac{\left(q^2-1\right)^3 \overline{\gamma}}{2 \left(q^6-q^4+q^2\right)} & 0 & 0 & -\frac{\left(q^2-1\right)^2 \overline{\gamma}}{2 \left(q^4-q^2+1\right)} & \frac{q^2 \left(q^2-1\right)^2 \overline{\gamma}}{2 \left(q^4-q^2+1\right)} \\
 \frac{\gamma  \left(q^2-1\right)^3}{2 \left(q^6-q^4+q^2\right)} & \frac{\left(q^2-1\right)^2 \lambda}{2 q^4} & 0 & 0 & \frac{\gamma  \left(q^2-1\right)}{2 \left(q^4-q^2+1\right)} & -\frac{\left(q^2-1\right) \lambda}{2 q^2} \\
 \frac{\left(q^2-1\right)^2 \overline{\gamma}}{2 \left(q^4-q^2+1\right)} & 0 & 0 & 0 & -\frac{q^2 \left(q^2-1\right) \overline{\gamma}}{2 \left(q^4-q^2+1\right)} & \frac{q^2 \left(q^2-1\right) \overline{\gamma}}{2 \left(q^4-q^2+1\right)} \\
 -\frac{\gamma  q^2 \left(q^2-1\right)}{2 \left(q^4-q^2+1\right)} & -\frac{\left(q^2-1\right) \lambda}{2 q^2} & 0 & 0 & \frac{\gamma  q^2 \left(q^2-1\right)}{2 \left(q^4-q^2+1\right)} & \frac{\left(q^2-1\right) \lambda}{2 q^2} \\
 -\frac{q^2 \left(q^2-1\right) \overline{\gamma}}{2 \left(q^4-q^2+1\right)} & -\frac{\left(q^2-1\right) \overline{\gamma}}{2 \left(q^4-q^2+1\right)} & 0 & 0 & -\frac{q^4 \left(q^2-1\right) \overline{\gamma}}{2 \left(q^4-q^2+1\right)} & -\frac{q^2 \left(q^2-1\right) \overline{\gamma}}{2 \left(q^4-q^2+1\right)} \\
 \frac{\gamma -\gamma  q^2}{2 q^4-2 q^2+2} & \frac{\left(q^2-1\right) \lambda}{2 q^2} & 0 & 0 & -\frac{\gamma  q^2 \left(q^2-1\right)^2}{2 \left(q^4-q^2+1\right)} & 0 \\
 \frac{q^2 \left(q^2-1\right) \overline{\gamma}}{2 \left(q^4-q^2+1\right)} & -\frac{q^2 \left(q^2-1\right) \overline{\gamma}}{2 \left(q^4-q^2+1\right)} & 0 & 0 & 0 & -\frac{q^2 \left(q^2-1\right)^2 \overline{\gamma}}{2 \left(q^4-q^2+1\right)} \\
 -\frac{\gamma  \left(q^2-1\right)^3}{2 \left(q^5-q^3+q\right)} & -\frac{\left(q^2-1\right)^2 \lambda}{2 q^3} & 0 & 0 & -\frac{\gamma  q \left(q^2-1\right)}{2 \left(q^4-q^2+1\right)} & \frac{\left(q^2-1\right) \lambda}{2 q} \\
 -\frac{q \left(q^2-1\right)^2 \overline{\gamma}}{2 \left(q^4-q^2+1\right)} & 0 & 0 & 0 & \frac{q^3 \left(q^2-1\right) \overline{\gamma}}{2 \left(q^4-q^2+1\right)} & -\frac{q^3 \left(q^2-1\right) \overline{\gamma}}{2 \left(q^4-q^2+1\right)} \\
 0 & 0 & 0 & 0 & 0 & 0 \\
 \frac{\left(q^2-1\right)^2 \overline{\gamma}}{2 \left(q^5-q^3+q\right)} & \frac{\left(q^2-1\right)^2 \overline{\gamma}}{2 \left(q^5-q^3+q\right)} & 0 & 0 & \frac{q \left(q^2-1\right)^2 \overline{\gamma}}{2 \left(q^4-q^2+1\right)} & \frac{q \left(q^2-1\right)^2 \overline{\gamma}}{2 \left(q^4-q^2+1\right)} \\
 \frac{\gamma  \left(q^2-1\right)}{2 \left(q^5-q^3+q\right)} & -\frac{\left(q^2-1\right) \lambda}{2 q^3} & 0 & 0 & \frac{\gamma  q \left(q^2-1\right)^2}{2 \left(q^4-q^2+1\right)} & 0 \\
 -\frac{q \left(q^2-1\right) \overline{\gamma}}{2 \left(q^4-q^2+1\right)} & \frac{q \left(q^2-1\right) \overline{\gamma}}{2 \left(q^4-q^2+1\right)} & 0 & 0 & 0 & \frac{q \left(q^2-1\right)^2 \overline{\gamma}}{2 \left(q^4-q^2+1\right)} \\
\end{array}
\right]\]

\end{document}